 \journal{ }
\newtheorem{theorem}{Theorem}[section]
\newtheorem{lemma}[theorem]{Lemma}
\newtheorem{corollary}[theorem]{Corollary}
\newtheorem{proposition}[theorem]{Proposition}
\newtheorem{problem}{Problem}
\newtheorem{main theorem}{Theorem}
\begin{document}



\begin{frontmatter}
\title{Existence of global solutions to the nonlocal mKdV equation on the line}


\author[inst2]{Anran Liu}

\author[inst2]{Engui Fan$^{*,}$  }

\address[inst2]{ School of Mathematical Sciences and Key Laboratory of Mathematics for Nonlinear Science, Fudan University, Shanghai,200433, China\\
* Corresponding author and e-mail address: faneg@fudan.edu.cn  }





\begin{abstract}
  In this paper,  we address   the existence of global solutions to the Cauchy problem for the integrable nonlocal   modified Korteweg-de vries (nonlocal mKdV)
   equation with the initial data $u_0 \in H^{3}(\mathbb{R}) \cap H^{1,1}(\mathbb{R}) $ with the $L^1(\mathbb{R})$  small-norm assumption.
A  Lipschitz  $L^2$-bijection map  between  potential   and reflection coefficient
is established  by using inverse scattering method based
on a  Riemann-Hilbert problem associated with the Cauchy problem.
The map from initial potential to reflection coefficient is obtained  in  direct scattering transform.
The inverse scattering transform goes back to the   map  from scattering coefficient  to  potential  by  applying
 the reconstruction formula and Cauchy
integral operator.  The bijective relation naturally yields
the existence of  a global solutions    in a Sobolev space  $H^3(\mathbb{R})\cap H^{1,1}(\mathbb{R})$
to the Cauchy problem.

\end{abstract}

\begin{keyword}
  Nonlocal mKdV  equation; Riemann-Hilbert problem;   Plemelj  projection operator;     Lipschitz continuous; global solutions.

  \textit{Mathematics Subject Classification:} 35P25; 35Q51; 35Q15; 35A01; 35G25.
  \end{keyword}
\end{frontmatter}
\tableofcontents

\section{Introduction and main results}
In this paper, we  establish   the global existence of solutions to
 the Cauchy problem for 
   the nonlocal    mKdV   equation  
\begin{align}
& u_t(x,t)+   u_{xxx}(x,t) +6\sigma u(x,t)u(-x,-t)  u_x(x,t)=0,  \label{mkdv1}\\
 & u(x,0)=u_0(x),\label{mkdv2}
\end{align}
where   $u_0 \in H^{3}(\mathbb{R}) \cap H^{1,1}(\mathbb{R}) $ and $\sigma=\pm1$ denote the focusing  and defocusing cases,
respectively.

The nonlocal mKdV equation \eqref{mkdv1}, introduced in \cite{nmkdv1,nmkdv2},
  can be regarded as the integrable nonlocal extension of the    classical  mKdV equation
\begin{equation}
u_t(x,t)+u_{xxx}(x,t)+6 \sigma u^{2}(x,t)u_{x}(x,t)=0,\label{lmkdv}
\end{equation}
by replacing $u^{2}(x,t)$ with the \emph{PT}-symmetric term $u(x,t)u(-x,-t)$  \cite{PT}.
In physical application, the nonlocal mKdV equation  \eqref{mkdv1} possesses the shifted parity and delayed time reversal symmetry,
and thus it can be related to the Alice-Bob system \cite{10}. For instance, a special approximate solution of the nonlocal mKdV was applied to theoretically capture the salient features of two correlated dipole blocking events in atmospheric dynamical
systems \cite{11}.

There is much work on the study of various mathematical properties for the nonlocal mKdV equation \eqref{mkdv1}.
 The $N$-soliton solutions for  the  nonlocal mKdV equation \eqref{mkdv1}
    with zero boundary conditions  were constructed by using  the  Darboux transformation   and the inverse scattering  transform   respectively  \cite{Darboux,IST}.
Further the    Riemann-Hilbert (RH) method was   used  to construct N-soliton solutions    for
the nonlocal   mKdV  equations \eqref{mkdv1} with  nonzero boundary conditions   \cite{ZY}.
 The long-time asymptotics for the nonlocal   mKdV equation \eqref{mkdv1}   with decaying initial data  was   investigated   in  \cite{HF}  via  the nonlinear  steepest-descent method
 developed by  Deift and Zhou   \cite{DZ1993}.
  Recently, we obtained
   the long time asymptotic behavior for the Cauchy problem of the  nonlocal mKdV equation \eqref{mkdv1} with nonzero initial data in the solitonic regions
    by using the $\bar{\partial}$-steepest-descent method \cite{ZF1,ZF2}.  
    This method, introduced   by   McLaughlin and Miller   \cite{MM1,MM2},     has been extensively implemented
in  the long-time asymptotic analysis  and  the soliton resolution conjecture of  some integrable systems  \cite{fNLS,Liu3,LJQ,YF1,YYLmch}.
 However, the   existence  global solutions to  the Cauchy problem   (\ref{mkdv1})-(\ref{mkdv2}) for  the  nonlocal mKdV equation   is   still
     unknown  to our best knowledge.   A technical  difficulty  to apply partial differential analytical technique for
     proving  global   existence of   the nonlocal   mKdV equation \eqref{mkdv1}
 comes from the fact that
 the   mass and energy conservation laws to  the   equation \eqref{mkdv1}
 are in  the form
\begin{align}
&I_0=\int_{\mathbb{R}} q(x,t)\overline{q(-x, -t)}dx, \nonumber\\
&I_1=\int_{\mathbb{R}} [ q_x(x,t)\overline{q_x(-x, -t)} - \sigma q^2(-x,t) \overline{q^2(-x,-t)} ]dx,\nonumber
\end{align}
   do not preserve any reasonable norm and  may  be negative.
 In contrast with this,  the  mass and energy conservation laws  of  the   classical mKdV equation (\ref{lmkdv})
  allows to obtain a priory estimates for
 establishing  a unique global  solution.

The main purpose in the present paper is to  overcome this  difficulty and establish   the global existence of solutions to
 the Cauchy problem (\ref{mkdv1})-(\ref{mkdv2})  in
 an appropriate Sobolev space by applying  the  inverse scattering theory.
Our   principal result  is  now stated as follows.

\begin{main theorem} \label{th1}
Let   the initial data  $u_0 \in H^3(\mathbb{R}) \cap H^{1,1}(\mathbb{R})$ such that the spectral problem
     (\ref{lax1}) admits no eigenvalues or resonances. Then

\begin{itemize}

 \item  There exists a  $L^2$-bijection map   between  the  potential $u$
     and reflection coefficients $r_{1,2}$,
\begin{equation}
H^3(\mathbb{R}) \cap H^{1,1}(\mathbb{R})\ni u \mapsto r_{1,2} \in H^{1,1}(\mathbb{R}) \cap L^{2,3}(\mathbb{R}), \label{1.3}
\end{equation}
which is Lipschitz continuous.

\item There exists a unique global solution $
u \in C([0,\infty),H^3(\mathbb{R}) \cap H^{1,1}(\mathbb{R}))
$ to the Cauchy problem (\ref{mkdv1})-(\ref{mkdv2}).
Furthermore, the map
$$
 H^3(\mathbb{R}) \cap H^{1,1}(\mathbb{R}) \ni u_{0}  \mapsto u  \in C\left([0, \infty); H^3(\mathbb{R}) \cap H^{1,1}(\mathbb{R})\right)
$$
is Lipschitz continuous.

\end{itemize}

\end{main theorem}

A key  in proving the above   result   is   to  establish
a Lipschitz  $L^2$-bijection  (\ref{1.3})    between  solution  and scattering coefficient by using inverse scattering method \cite{zhou1,zhou2,Pelinovsky} .
  The $L^2$-bijection (\ref{1.3})  implies that     global well-posdeness of the Cauchy problem (\ref{mkdv1})-(\ref{mkdv2})
in the space $H^{3}(\mathbb{R})\cap H^{1,1}(\mathbb{R})$.

The structure of the paper is as follows. In Section \ref{sec2}, we  focus on    the direct scattering
 transform to the Cauchy problem (\ref{mkdv1})-(\ref{mkdv2}).     We especially  establish  the Lipschitz continuous
  maps  from the initial data to the Jost function and  the reflection coefficient.
 In Section \ref{sec3}, we carry out the inverse scattering transform to  set up a   RH  problem associated with the Cauchy problem (\ref{mkdv1})-(\ref{mkdv2}),
 Further the  solvability of the RH problem  is   shown.
  In Section \ref{sec4}, we reconstruct  and estimate the potential  from  the solutions of the RH
  problem  on positive half line  $\mathbb{R}^{+}$ and negative  half line $\mathbb{R}^{-}$ respectively.
 We further  establish a Lipschitz continuous mapping from  the reflection coefficients to the potentials. In Section \ref{sec5}, we perform the time evolution
 of the reflection coefficients and the RH problem.  Then, we prove that there exists   a  unique  global solution to the initial value problem (\ref{mkdv1})-(\ref{mkdv2})
 of the nonlocal mKdV equation in the space $H^{3}(\mathbb{R})\cap H^{1,1}(\mathbb{R})$.

\section{Direct  scattering transforms} \label{sec2}

In this section, we  state some  main
results on  the  direct  scattering transform associated with the Cauchy problem
(\ref{mkdv1})-(\ref{mkdv2}).   The details   can be found in
\cite{nmkdv1,nmkdv2}.

\subsection{Lipschitz continuity of the Jost functions }

The nonlocal mKdV equation  (\ref{mkdv1})   admits   the  Lax pair
\begin{align}
&\psi_x -iz \sigma_{3}\psi = Q  \psi, \label{lax1}\\
&\psi_t -4iz^3\sigma_{3}\psi =(4z^{2}Q-2iz(Q_{x}-Q^{2})\sigma_{3}+2Q^3-Q_{xx} ) \psi,\label{lax2}
\end{align}
where
\begin{align}
&Q =\begin{pmatrix}
0&u(x,t) \\
- \sigma u(-x,-t) &0
\end{pmatrix}, \ \
\sigma_3=\begin{pmatrix}
1&0\\
0&-1
\end{pmatrix}.\nonumber
\end{align}

Define the Jost functions    $\psi^{\pm}(x,z)$   to the spectral problem (\ref{lax1})  with  the following boundary conditions
$$\psi^{\pm}(x,z) \sim  e^{izx\sigma_{3}}, \ x\to \pm\infty.$$
Making a transformation
$$
m^\pm(x,z) =\psi^\pm(x,z)  e^{-izx\sigma_3},
$$
then
 \begin{equation}
 \lim\limits_{x\to\pm\infty} m^{\pm}(x,z)=I,\nonumber
 \label{m}\end{equation}
 and $m^\pm(x,z)$  satisfy  the Voterra integral equations
\begin{align}
m^{\pm}(x,z)=I+\int_{\pm \infty}^{x}e^{-iz(y-x)\operatorname{ad}\sigma_{3}}Qm^{\pm}(y,z)dy,\label{m_{1}}
\end{align}
where    $e^{\operatorname{ad} \sigma_{3}} A := e^{ \sigma_{3}}A  e^{-\sigma_{3}}$.

Denote $m^{\pm}(x,z)=[m_{1}^{\pm}(x,z),m_{2}^{\pm}(x,z)]$ .
From symmetry of Lax pair, we can get
\begin{equation}
 m_{1}^{\pm}(x,z)=\sigma\Lambda
 \overline{m_{2}^{\mp}(-x,-\overline{z}}),\quad
  m_{2}^{\pm}(x,z)=\Lambda
 \overline{m_{1}^{\mp}(-x,-\overline{z}}),
\label{symmetry}\end{equation}
where
$$
\Lambda=\begin{pmatrix}
0&\sigma\\
1&0
\end{pmatrix}.
$$
It can be further  shown   that functions $m^{+}_{1}(x,z)$ and  $m^{-}_{2}(x,z)$  are analytic in  $z\in \mathbb{C}^{+}$,
  whereas the functions $m^{+}_{2}(x,z)$ and $m^{-}_{1}(x,z)$ are analytic in  $z\in \mathbb{C}^{-}$.
There is a matrix $S(z)$ satisfying
\begin{equation}
\label{1}
m^{+}(x,z)=m^{-}(x,z) e^{izx\sigma_{3}} S(z),
 \end{equation}
 where
$$
S(z)=\begin{pmatrix}
a(z)&c(z)\\
b(z)&d(z)
\end{pmatrix}.
$$

From (\ref{1}), we deduce that
\begin{align}
&a(z)= \det[m_{1}^{+}(x,z),m_{2}^{-}(x,z)]\label{a}\\
&d(z)= \det[m_{1}^{-}(x,z),m_{2}^{+}(x,z)]\label{d}\\
&b(z) =\det[m_{1}^{-}(x,z),m_{1}^{+}(x,z)]e^{-2izx}\label{b}.
\end{align}
It can be shown that   $a(z)$  is  analytic in $C^{+}$ and  $a(z)\to 1$ as $z\to\infty$ in $\overline{C^{+}}$ while $d(z)$  is  analytic in $C^{-}$ and  $d(z)\to 1$ as $z\to\infty$ in $\overline{C^{-}}$.

From $(\ref{symmetry})$ we can get the scattering coefficients satisfy the following symmetry:
$$
a(z)=\bar{a}(-\bar{z}),d(z)=\overline{d(-\bar{z})},c(z)=-\sigma\overline{b(-\bar{z})}.
$$
We define the reflection coefficient:
$$
r_{1}(z)=b(z) / a(z),r_{2}(z)=c(z) / d(z)=-\sigma\overline{b(-z)} / d(z) \quad z \in \mathbb{R}.
$$

The determinant of $S(z)$ is
$$
a(z)d(z)+\sigma b(z)\overline{b(-\bar{z})}=1.
$$
In the follows, we  prove the existence   of $m^{\pm}(x,z)$.  For
$$
f(\cdot,z)=
(
f_{1}(\cdot,z),
f_{2}(\cdot,z)
)^{\top}\in L^{\infty}(\mathbb{R}),
$$
 define
\begin{equation}
(K_{u}f)(x,z):=-\int_{x}^{\infty}
 {\rm diag} (
1, e^{2iz(y-x))}  Q (y) f(y,z)dy,
\label{2.24}
\end{equation}
then the equation (\ref{m_{1}}) can be written as
\begin{equation}
(I-K_{u}) m_{1}^{+}(x,z)=e_{1}.
\label{2}
\end{equation}
Then the   operators of $K_{u}$  has the following property.


\begin{lemma}
Let $u\in L^{1}(\mathbb{R})$, for fixed $z\in \overline{\mathbb{C}^{+}}$,  $I -K_{u}$ is an invertible operator in  $L^{\infty}(\mathbb{R})$.
\label{lemma1}\end{lemma}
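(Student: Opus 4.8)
The plan is to establish invertibility of $I-K_u$ by showing that its Neumann series $\sum_{n=0}^{\infty} K_u^n$ converges in the operator norm on $L^\infty(\mathbb{R})$; the sum then serves as the inverse directly. The first point to record is that the exponential weight in \eqref{2.24} is harmless for $z$ in the closed upper half-plane: since the integration variable satisfies $y\geq x$ and $\mathrm{Im}\,z\geq 0$, we have $|e^{2iz(y-x)}|=e^{-2(\mathrm{Im}\,z)(y-x)}\leq 1$. Hence the matrix $\mathrm{diag}(1,e^{2iz(y-x)})$ has operator norm at most $1$, and writing $\|\cdot\|$ for a fixed matrix norm we obtain the pointwise bound
\[
|(K_u f)(x,z)|\leq \int_x^\infty \|Q(y)\|\,|f(y,z)|\,dy \leq \|f\|_{L^\infty}\int_x^\infty \|Q(y)\|\,dy,
\]
valid uniformly in $z\in\overline{\mathbb{C}^+}$.

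The crux of the argument is that $K_u$ itself need not be a contraction, because $\int_{\mathbb{R}}\|Q\|$ is comparable to $\|u\|_{L^1}$ and may be large. This is circumvented by exploiting the Volterra structure: each application of $K_u$ pushes the lower limit of integration further to the right, so iterating the bound above and integrating over the ordered region $x\leq y_1\leq\cdots\leq y_n$ produces
\[
|(K_u^n f)(x,z)|\leq \|f\|_{L^\infty}\int_x^\infty\!\!\int_{y_1}^\infty\!\!\cdots\!\!\int_{y_{n-1}}^\infty \|Q(y_1)\|\cdots\|Q(y_n)\|\,dy_n\cdots dy_1 = \frac{\|f\|_{L^\infty}}{n!}\Big(\int_x^\infty \|Q(y)\|\,dy\Big)^n.
\]
The identity for the simplex integral supplies the decisive factor $1/n!$, and all intermediate exponential weights remain bounded by $1$ by the same $\mathrm{Im}\,z\geq 0$ estimate applied on each successive subinterval. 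It follows that $\|K_u^n\|_{L^\infty\to L^\infty}\leq \|Q\|_{L^1}^n/n!$, where $\|Q\|_{L^1}:=\int_{\mathbb{R}}\|Q(y)\|\,dy\lesssim \|u\|_{L^1}$.

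Summing over $n$ then gives $\sum_{n=0}^\infty \|K_u^n\|_{L^\infty\to L^\infty}\leq e^{\|Q\|_{L^1}}<\infty$, so the Neumann series converges absolutely in operator norm. Setting $(I-K_u)^{-1}=\sum_{n=0}^\infty K_u^n$ and checking by a telescoping computation that this is a genuine two-sided inverse completes the proof. I anticipate the only technical care to lie in writing the iterated-kernel estimate correctly---verifying the ordered-simplex identity and confirming that the matrix and exponential factors at every level of iteration contribute no growth---after which the conclusion follows from the convergence of the exponential series. The assumption $u\in L^1(\mathbb{R})$ is exactly what is needed to make $\|Q\|_{L^1}$ finite, which is why it is the sole hypothesis.
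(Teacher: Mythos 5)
Your proposal is correct and takes essentially the same route as the paper: both arguments use $|e^{2iz(y-x)}|\leq 1$ for $y\geq x$, $\operatorname{Im}z\geq 0$, together with the Volterra structure to get the factorial bound $\|K_u^n\|_{L^\infty\to L^\infty}\leq \|Q\|_{L^1}^n/n!$ (the paper states it as $\|u\|_{L^1}^n/n!$), and then sum the Neumann series to conclude invertibility with $\|(I-K_u)^{-1}\|\leq e^{\|Q\|_{L^1}}$. Your ordered-simplex computation is simply an explicit justification of the iterated bound that the paper asserts with ``similar to the analysis described above.''
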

\begin{proof}
Notice that $z\in \overline{\mathbb{C}^{+}}$ and
\begin{equation}
\begin{aligned}
| K_{u} f(x,z)|&\leq |(K_{u}f)_{1}(x,z)|+|(K_{u}f)_{2}(x,z)|\\&
=|\int_{x}^{\infty}u(-y)f_{1}dy|+|\int_{x}^{\infty}u(y)e^{2iz(y-x)}f_2dy|\\&
\leq\|u\|_{L^{1}(\mathbb{R})}\|f\|_{L^{\infty}(\mathbb{R})},
\end{aligned}\nonumber
\end{equation}
which implies  that $K_{u}$ is a bounded operator in $L^{\infty}(\mathbb{R})$  for any fixed $z\in\overline{\mathbb{C}^{+}}$.

Similar to the analysis described above, we have for $n\geq1$,
\begin{equation}
|K_{u}^{n}f|\leq(1/n!)\|u\|_{L^{1}(\mathbb{R})}^{n}\|f\|_{L^{\infty}(\mathbb{R})}.
\end{equation}

From above analysis we know for any fixed $z\in\overline{\mathbb{C}^{+}}$, $K_{u}^{n}$ is a bounded operator in $L^{\infty}(\mathbb{R})$ and 
$$\|K_{u}^{n}\|_{L^{\infty}(\mathbb{R})\to L^{\infty}(\mathbb{R})}\leq (1/n!)\|u\|_{L^{1}(R)}^{n},$$
 which yields $1-K_{u}$ is an invertible operator in  $L^{\infty}(\mathbb{R})$. Moreover 
 $$\|(1-K_{u})^{-1}\|_{L^{\infty}(\mathbb{R})\to L^{\infty}(\mathbb{R})}\leq e^{\|u\|_{L^{1}(\mathbb{R})}}.$$
\end{proof}

Next, we study the asymptotics  of  the Jost functions $m^{\pm}(x,z)$.
\begin{lemma}
If $u\in H^{3}(\mathbb{R})\cap H^{1,1}(\mathbb{R})$, then  as  $|{\rm Im} z|\rightarrow \infty$,  for every $x\in\mathbb{ R}$,
\begin{align}
m_{1}^{\pm}(x;z)=&e_{1}+p_1^\pm (x)(2iz)^{-1}+q_1^\pm  (x) (2iz)^{-2}+g_1^\pm  (x) (2iz)^{-3}+\mathcal{O}(z^{-4}),\nonumber\\
m_{2}^{\pm}(x;z)=&e_{2}+p_2^\pm (x)(2iz)^{-1}+q_2^\pm(x)(2iz)^{-2}+\mathcal{O}(z^{-3}),\nonumber
\end{align}
where
\begin{align}
&p_1^\pm(x) = \left[\sigma\int_{x}^{\pm\infty}u(-y)u(y)dy,\   -\sigma u(-x) \right]^{\top},\\\nonumber
&q_1^\pm (x) =\begin{bmatrix}
\sigma\int_{x}^{\pm\infty}\partial u(-y)u(y)dy+\int_{x}^{\pm\infty}u(-x_{1})u(x_{1})\int_{x_{2}}^{\pm\infty}u(-x_{2})u(x_{2})\\
-\sigma\partial u(-x)-u(-x)\int_{x}^{\pm\infty}u(-y)u(y)dy
\end{bmatrix},\\\nonumber
&g_1^\pm (x) =\\\nonumber
&\begin{bmatrix}
\sigma\int_{x}^{\pm\infty}u(y)\partial^{2} u(-y)dy+\int_{x}^{\pm\infty}u(x_{1})\partial u(-x_{1})\int_{x_{1}}^{\pm\infty}u(-x_{2})u(x_{2})+\int_{x}^{\pm\infty}u(-y)^{2}u(y)^{2}dy\\
+\sigma\int_{x}^{\pm\infty}u(-x_{1})u(x_{1})\int_{x_{1}}^{\pm\infty}u(-x_{2})u(x_{2})\int_{x_{2}}^{\pm\infty}u(-x_{3})u(x_{3})+\int_{x}^{\pm\infty}u(-x_{1})u(x_{1})\int_{x_{1}}^{\pm\infty}u(x_{2})\partial u(-x_{2})\\ \\
-\partial u(-x)\int_{x}^{\pm\infty}u(-y)u(y)dy-\sigma u(-x)\int_{x}^{\pm\infty}u(-x_{1})u(x_{1})\int_{x_{1}}^{\pm\infty}u(-x_{2})u(x_{2})\\
- u(-x)\int_{x}^{\pm\infty}\partial u(-y)u(y)dy-u^{2}(-x)u(x)-\sigma\partial^{2} u(-x)
\end{bmatrix},\\\nonumber
\label{2.36}\\
&p_2^\pm (x) = \left[-u(x), \sigma\int_{x}^{\pm\infty}u(y)u(-y)dy \right]^{\top}, \\\nonumber
&q_2^\pm (x) =\begin{bmatrix}
-\sigma\partial u(-x)-u(-x)\int_{x}^{\pm\infty}u(-y)u(y)dy\\
\sigma\int_{x}^{\pm\infty}\partial u(-y)u(y)dy+\int_{x}^{\pm\infty}u(-x_{1})u(x_{1})\int_{x_{2}}^{\pm\infty}u(-x_{2})u(x_{2})
\end{bmatrix}.
\label{2.37}\end{align}

\label{lemma2}\end{lemma}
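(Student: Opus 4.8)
The plan is to read the expansion off the Volterra equation (\ref{m_{1}}) directly, using that for $z\in\mathbb{C}^{+}$ and $y>x$ the off-diagonal entry of the kernel carries the decaying factor $e^{2iz(y-x)}$, and then to obtain the second column for free from the symmetry (\ref{symmetry}). I would first write the first-column equation $(I-K_{u})m_{1}^{+}=e_{1}$ from (\ref{2.24})--(\ref{2}) in components. Putting $m_{1}^{+}=(A,B)^{\top}$, it becomes the coupled pair
\begin{align*}
A(x,z)&=1-\int_{x}^{\infty}u(y)\,B(y,z)\,dy,\\
B(x,z)&=\sigma\int_{x}^{\infty}u(-y)\,A(y,z)\,e^{2iz(y-x)}\,dy,
\end{align*}
in which the $A$-equation is a plain (non-oscillatory) integral while the $B$-equation carries the decaying exponential.

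The next step is iterated integration by parts in the $B$-equation. Each integration by parts against $e^{2iz(y-x)}$ produces a boundary term at $y=x$ of order $(2iz)^{-1}$, evaluating the amplitude pointwise, plus a remainder with one further $y$-derivative on the amplitude; since the boundary contribution at $y=+\infty$ vanishes by decay, $k$ steps yield the successive boundary terms $-\sigma u(-x)$, $-\sigma\partial u(-x)$, $-\sigma\partial^{2}u(-x),\dots$ together with a remainder of order $(2iz)^{-(k+1)}$. Feeding the resulting expansion of $B$ back into the $A$-equation and integrating from $x$ to $\pm\infty$ produces the first-component coefficients as the (possibly nested) integrals of products of $u(\pm\,\cdot)$ and its derivatives recorded in $p_{1}^{\pm},q_{1}^{\pm},g_{1}^{\pm}$; the nesting is exactly the coupling of $A$ and $B$ through the Neumann iteration $(I-K_{u})^{-1}=\sum_{n}K_{u}^{n}$. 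A clean independent verification is to substitute the ansatz $m=\sum_{k\ge 0}m_{k}(x)(2iz)^{-k}$ into the differential form $m_{x}=iz\,\operatorname{ad}\sigma_{3}(m)+Qm$ and solve the resulting recursion with boundary data $m_{1}^{\pm}(\pm\infty)=e_{1}$; matching powers reproduces the same coefficients.

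For the second column I would avoid repeating the computation and instead apply the symmetry $m_{2}^{\pm}(x,z)=\Lambda\,\overline{m_{1}^{\mp}(-x,-\overline{z})}$ from (\ref{symmetry}). Because the transformation $z\mapsto-\overline{z}$ followed by complex conjugation fixes each power $(2iz)^{-k}$, this converts the expansion of $m_{1}^{\mp}$ termwise into that of $m_{2}^{\pm}$, which explains why $p_{2}^{\pm},q_{2}^{\pm}$ are the $\Lambda$-reflected counterparts of $p_{1}^{\pm},q_{1}^{\pm}$ (and why the second column is stated only through order $(2iz)^{-2}$).

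The hard part is the \emph{uniform-in-$x$} remainder estimate, which is where the analytic content sits. After $N$ integrations by parts the remainder is an oscillatory integral whose amplitude involves up to $N$ derivatives of $u$; to bound it by $C(2iz)^{-N}$ uniformly in $x\in\mathbb{R}$ I would estimate it in $L^{\infty}_{x}$ using $\|u^{(j)}\|_{L^{1}}$ and then close the loop with the resolvent bound $\|(I-K_{u})^{-1}\|_{L^{\infty}\to L^{\infty}}\le e^{\|u\|_{L^{1}}}$ from Lemma \ref{lemma1}, so that the estimate controls the genuine solution $m_{1}^{+}$ rather than only the approximation. The hypothesis $u\in H^{3}(\mathbb{R})\cap H^{1,1}(\mathbb{R})$ is precisely calibrated to this: the three derivatives from $H^{3}$ make $\partial^{2}u(-x)$ in $g_{1}^{\pm}$ a bona fide continuous function (via $H^{1}\hookrightarrow C_{0}$) and bound the amplitudes, while the weighted decay from $H^{1,1}$ forces the boundary terms at infinity to vanish and makes all coefficient integrals converge. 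The two points demanding care are therefore (i) checking that every boundary contribution at $y=\pm\infty$ drops out, and (ii) the bookkeeping that matches the nested integrals to the stated $p,q,g$; the $m^{-}$ case is identical, with $\int_{\pm\infty}^{x}$ replaced by $\int_{-\infty}^{x}$ and the decay supplied in the opposite half-plane.
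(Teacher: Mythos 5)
Your proposal is sound, and its engine --- writing $(I-K_{u})m_{1}^{+}=e_{1}$ in components and iterating integration by parts against the decaying exponential, then feeding the oscillatory component back into the non-oscillatory integral --- is exactly the paper's own proof of this lemma; the paper, too, works only with the first column and disposes of $m_{2}^{\pm}$ with ``similar to be proved.'' Where you genuinely depart from the paper is in (a) deriving the second column from the symmetry (\ref{symmetry}) rather than recomputing it, and (b) insisting on a uniform-in-$x$ remainder bound closed with $\|(I-K_{u})^{-1}\|_{L^{\infty}\to L^{\infty}}\le e^{\|u\|_{L^{1}}}$ from Lemma \ref{lemma1}; both are improvements, since the paper simply writes $\mathcal{O}(z^{-3})$ and passes to the limit. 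But be aware that carrying out (a) faithfully will \emph{not} reproduce the coefficients printed in the lemma: your observation that $z\mapsto-\bar z$ composed with conjugation fixes $(2iz)^{-k}$ is correct, so the symmetry gives $p_{2}^{\pm}(x)=\Lambda\,p_{1}^{\mp}(-x)$, and the change of variables $y\mapsto-y$ flips $\int_{-x}^{\mp\infty}$ into $-\int_{x}^{\pm\infty}$; hence the second entry of $p_{2}^{\pm}$ comes out as $-\sigma\int_{x}^{\pm\infty}u(y)u(-y)\,dy$, opposite in sign to the statement. This is not a defect of your route: the same minus sign follows from redoing the integration by parts directly on the second-column Volterra equations, and independently from $\det m^{\pm}\equiv 1$, which forces the $(2iz)^{-1}$ coefficients of $m_{11}^{\pm}$ and $m_{22}^{\pm}$ to cancel --- the printed $p_{2}^{\pm}$ violates that. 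Likewise the printed $q_{2}^{\pm}$ is a bare entry-swap of $q_{1}^{\pm}$, missing the evaluation at $-x$, the factor $\sigma$ from $\Lambda$, and the flip of integration limits; the correct expression is $\Lambda\,q_{1}^{\mp}(-x)$ after the variable flip. So your method is right and in fact corrects the statement; just replace the sentence claiming the stated $p_{2}^{\pm},q_{2}^{\pm}$ ``are'' the $\Lambda$-reflected counterparts by the computed ones. One last hypothesis to flag explicitly: the conjugation symmetry (\ref{symmetry}) presupposes $u$ real-valued, which the paper assumes implicitly; granting it as stated, your termwise transfer (and hence the restriction of the second column to order $(2iz)^{-2}$) is legitimate.
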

\begin{proof}
We will only to prove the statement for $m_{1}^{\pm}(x;z)$,  while  $m_{2}^{\pm}(x;z)$ is similar to be proved.
Rewriting (\ref{m_{1}}) as the component form:
$$
m_{11}^{+}(x;z)=1-\int_{x}^{\infty}u(y)m_{21}^{+}(y;z)dy.
$$
$$
m_{21}^{+}(x;z)=\sigma\int_{x}^{\infty}e^{2iz(y-x)}u(-y)m_{11}^{+}(y;z)dy.
$$
We have proved that for every $x\in \mathbb{R}$, $m_{1}^{\pm}(x;z)$ is analytic in $z\in \mathbb{C}^{+}$. Noticing $u\in H^{3}(\mathbb{R})\cap H^{1,1}(\mathbb{R})\hookrightarrow L^\infty$ which yields (\ref{m_{1}})
is bounded for every $z\in \mathbb{C}^{+}$ and the integrand converges to $e_{1}$ as $|Imz|\rightarrow \infty$.
Integrating by part and recalling $u\in H^{3}(\mathbb{R})\cap H^{1,1}(\mathbb{R})$ we can get
\begin{align}
m_{11}^{+}(x;z)=&1+1/(2iz)\sigma\int_{x}^{\infty}u(y)u(-y)m_{11}^{+}dy+1/(2iz)^{2}\sigma\int_{x}^{\infty}u(y)\partial u(-y)m_{11}^{+}dy\nonumber\\
&+1/(2iz)^{2}\sigma\int_{x}^{\infty}u(y)u(-y)m_{21}^{+}dy+\mathcal{O}(z^{-3})\nonumber
\end{align}
and
$$
m_{21}^{+}(x;z)=-\sigma u(-x)m_{11}^{+}/(2iz)-\sigma(\partial u(-x)m_{11}^{+}-u(x)u(-x)m_{21}^{+})/(2iz)^{2}+\mathcal{O}(z^{-3}).
$$

Letting $|z|\rightarrow \infty$ and noticing $m_{1}^{+}\rightarrow e_{1}$ as $|Imz|\rightarrow \infty$ we get the expanding formula of $m_{1}^{\pm}(x;z)$.

\end{proof}

Similar to above analysis, we define $f(x;z)$ as a  binary function :
$$
f(x,z)=
(
f_{1}(x;z),
f_{2}(x;z) )^{\top}\in L_{x}^{\infty}(\mathbb{R})\otimes L_{z}^{2}(\mathbb{R}).
$$
$K_{u}$ $\partial_{z}K_{u}$  as   operators:
$$
 K_{u} f (x;z):=-\int_{x}^{\infty}
{\rm diag} (
1, e^{2iz(y-x))})  Q (y) f(y,z)dy,
$$
$$
(\partial_{z}K_{u}f)(x,z):=-\int_{x}^{\infty}
 {\rm diag} (
0, 2i(y-x)e^{2iz(y-x)})  Q (y) f(y,z)dy,
$$

We need to proof the following lemma first.

\begin{lemma}

If $w\in L^{2}(\mathbb{R})$, then
\begin{equation}
\sup_{x\in\rm{\mathbb{R}}}\left\| \int_{x}^\infty e^{-2iz(x-y)}w(y)dy \right\|_{L^2_z(\mathbb{R})}\leq \sqrt{\pi}\| w\|_{L^2(\mathbb{R})}.\label{5}
\end{equation}
If $w\in H^3(\mathbb{R})$, then for every $n=1,2,3$, we have
\begin{equation}
\sup_{x\in\rm{\mathbb{R}}}\left\| (2iz)^{n}\int_{x}^\infty e^{-2iz(x-y)}w(y)dy+\sum_{k=0}^{n-1}(-2iz)^{k+1}\partial_{x}^{k}w(x) \right\|_{L^2_z(\mathbb{R})}\leq \sqrt{\pi}\| \partial^{n}_x w\|_{L^2(\mathbb{R})}.
\label{6}
\end{equation}
If $w\in L^{2,1}(\mathbb{R})$, then for every $x_0\in \mathbb{R}^+$, we have
\begin{equation}
    \sup_{x\in (x_0,+\infty)}\left\|\langle x \rangle \int_{x}^\infty e^{-2iz(x-y)}w(y)dy \right\|_{L^2_z(\mathbb{R})}\leq \sqrt{\pi}\| w\|_{L^{2,1}(x_0,+\infty)}.
\label{8}
\end{equation}
Furthermore, if $w\in H^{1,1}(\mathbb{R})$, then for every $x_0\in \mathbb{R}^+$, we have
\begin{equation}
    \sup_{x\in (x_0,+\infty)}\left\|\langle x \rangle [(2iz)\int_{x}^\infty e^{-2iz(x-y)}w(y)dy+ w(x)]\right\|_{L^2_z(\mathbb{R})}\leq \sqrt{\pi}\| w\|_{H^{1,1}(x_0,+\infty)}
\label{45}
\end{equation}
and for every $x_0\in \mathbb{R}$, we have
\begin{equation}
    \sup_{x\in \mathbb{R}}\left\|(2iz)\int_{x}^\infty (y-x)e^{-2iz(x-y)}w(y)dy \right\|_{L^2_z(\mathbb{R})}\leq \sqrt{\pi}\| w\|_{H^{1,1}(\mathbb{R})},
\label{44}
\end{equation}
where $\langle x \rangle=(1+x^2)^{\frac 1 2}$.
\label{lemma3}\end{lemma}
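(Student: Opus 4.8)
The plan is to treat all five bounds as consequences of one observation: after the substitution $s=y-x$, the operator $w\mapsto\int_x^\infty e^{-2iz(x-y)}w(y)\,dy$, which I abbreviate $(Tw)(x,z)$, becomes $\int_0^\infty e^{2izs}w(x+s)\,ds$, i.e.\ the Fourier transform (in the frequency $2z$) of the one-sided function $s\mapsto w(x+s)\mathbf{1}_{s>0}$. From this, estimate \eqref{5} is immediate from Plancherel's theorem: writing $g_x(s)=w(x+s)\mathbf{1}_{s>0}$, the $z$-integral of the modulus squared equals $\tfrac12\int_{\mathbb R}|\widehat{g_x}(\xi)|^2\,d\xi=\pi\int_x^\infty|w(y)|^2\,dy$, the factor $\tfrac12$ coming from the rescaling $\xi=-2z$ and the $2\pi$ from Plancherel. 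This yields the sharp constant $\sqrt\pi$ and the uniformity in $x$, since $\int_x^\infty|w|^2\le\|w\|_{L^2}^2$. I would record the exact identity $\|(Tw)(x,\cdot)\|_{L^2_z}^2=\pi\int_x^\infty|w|^2$ for repeated use below.

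For \eqref{6} the engine is the integration-by-parts identity $(2iz)(Tw)(x,z)=-w(x)-(Tw')(x,z)$, obtained by writing $2iz\,e^{2iz(y-x)}=\partial_y e^{2iz(y-x)}$ and integrating by parts: the boundary term at $y=\infty$ vanishes because $w\in H^3\hookrightarrow C_0$, and the one at $y=x$ contributes $-w(x)$. Iterating this $n$ times expresses $(2iz)^n(Tw)$ as $\pm\,T\partial_x^n w$ plus a polynomial in $z$ whose coefficients are the boundary values $\partial_x^k w(x)$, $k=0,\dots,n-1$; moving that polynomial to the left-hand side and applying \eqref{5} to $\partial_x^n w$ gives the bound $\sqrt\pi\,\|\partial_x^n w\|_{L^2}$. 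I would carry out the recursion once explicitly to fix the signs and powers of the correction terms.

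The weighted bounds \eqref{8} and \eqref{45} I would obtain by pushing the weight through the exact Plancherel identity. For \eqref{8}, the hypothesis $x_0>0$ forces $0<x\le y$ on the domain of integration, so $\langle x\rangle\le\langle y\rangle$, and therefore $\langle x\rangle^2\|(Tw)(x,\cdot)\|_{L^2_z}^2=\pi\langle x\rangle^2\int_x^\infty|w|^2\le\pi\int_x^\infty\langle y\rangle^2|w|^2\le\pi\|w\|_{L^{2,1}(x_0,\infty)}^2$; the positivity of $x_0$ is exactly what makes the weight monotone and is thus indispensable. Then \eqref{45} follows by combining the $n=1$ identity $(2iz)(Tw)+w(x)=-(Tw')$ with \eqref{8} applied to $w'$, using $\|w'\|_{L^{2,1}}\le\|w\|_{H^{1,1}}$.

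The last bound \eqref{44}, carrying the extra factor $(y-x)$, is where I expect the real difficulty. I would again pass to Plancherel, now with $h_x(s)=s\,w(x+s)\mathbf{1}_{s>0}$, and use $\mathcal F[h_x'](-2z)=-2iz\,\mathcal F[h_x](-2z)$ to convert the prefactor $(2iz)$ into a derivative; this reduces the square of the left-hand side to $\pi\int_x^\infty|w(y)+(y-x)w'(y)|^2\,dy$. Integrating the cross term by parts (the weight $(y-x)$ kills the lower boundary term) collapses this to $\pi\int_x^\infty(y-x)^2|w'(y)|^2\,dy$, which must then be dominated by $\pi\|w\|_{H^{1,1}}^2$ through the comparison $(y-x)\le\langle y\rangle$. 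Pinning down the precise range of $x$ for which this weight comparison is valid, together with the decay of $\langle y\rangle w(y)$ needed to discard the boundary term at infinity, is the step I would treat most carefully; by contrast the first four estimates are routine once the Fourier--Plancherel picture and the single integration-by-parts identity are in hand.
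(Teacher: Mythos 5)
Your proposal follows the same route as the paper for the first four bounds: the exact Plancherel identity $\bigl\|\int_x^\infty e^{-2iz(x-y)}w(y)\,dy\bigr\|_{L^2_z}^2=\pi\int_x^\infty|w(y)|^2\,dy$, iterated integration by parts for (\ref{6}), the monotonicity $\langle x\rangle\le\langle y\rangle$ (valid because $x_0>0$) for (\ref{8}), and the combination of the $n=1$ identity with (\ref{8}) applied to $w'$ for (\ref{45}). These parts are correct and essentially identical to the paper's argument, which in addition simply cites Pelinovsky--Shimabukuro for (\ref{5}), (\ref{6}) with $n=1$, and (\ref{8}). Your instinct to re-derive the signs and powers in the correction polynomial is also warranted: the correct recursion gives $(2iz)^nTw+\sum_{k=0}^{n-1}(-1)^k(2iz)^{n-1-k}\partial_x^kw(x)=(-1)^nT\partial_x^nw$, so the sum displayed in (\ref{6}) is itself misprinted (for $n=1$ it reads $2izTw-2izw(x)$, whereas the true identity is $2izTw+w(x)=-Tw'$); the $L^2$ bound is unaffected.

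Concerning (\ref{44}), your reduction is again exactly the paper's (its entire proof is the sentence ``replacing $w$ with $(y-x)w$ and repeating the above process''), and your computation is right: the square of the left-hand side equals $\pi\int_x^\infty|w(y)+(y-x)w'(y)|^2\,dy$, which after the cross-term integration by parts collapses to $\pi\int_x^\infty(y-x)^2|w'(y)|^2\,dy$. But the point you flag as delicate is not a removable technicality: the comparison $(y-x)\le\langle y\rangle$ for all $y\ge x$ holds precisely when $x\ge0$, and the bound (\ref{44}) as stated, with the supremum over all $x\in\mathbb{R}$, is in fact false. Take $w\in C_c^\infty$ supported in $[0,1]$ with $w'\not\equiv0$; for $x<0$ one has $(y-x)\ge|x|$ on the support of $w'$, hence $\pi\int_x^\infty(y-x)^2|w'|^2\,dy\ge\pi x^2\|w'\|_{L^2}^2\to\infty$ as $x\to-\infty$, while the right-hand side of (\ref{44}) stays fixed. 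So neither you nor the paper can close this step as stated; the correct statement restricts the supremum to a half-line $x\in(x_0,\infty)$ with $x_0\ge0$ (or admits an $x_0$-dependent constant), which is consistent with how the lemma is actually invoked later --- for instance (\ref{49}) in Proposition \ref{prop25} is only claimed for $x\in\mathbb{R}^+$. In short, your proof is as complete as the paper's own, and the step you singled out for care is precisely where the paper's statement needs correction rather than where your argument is deficient.
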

\begin{proof}
The bounds (\ref{5}),(\ref{6}) for $n=1$, and  (\ref{8}) were given in \cite{Pelinovsky}. It remains to prove the estimate (\ref{6}) for $n=2,3$, (\ref{45}) and (\ref{44}). For every $x\in \mathbb{R}$ and $z\in \mathbb{R}$, define
$$f(x;z)=\int_{x}^\infty e^{-2iz(x-y)}w(y)dy=\int_{0}^\infty e^{2izy}w(x+y)dy.$$
Using the Plancherel's theorem, we have
$$
\|f(x;z) \|_{L^2_z}^2=\pi \int_{0}^\infty|w(x+y) |^2dy=\pi\int_{x}^\infty  |w(y)|^2dy.
$$
Further more, if $x\in \mathbb{R^{+}}$,we have
$$
\|f(x;z) \|_{L^2_z}^2=\pi \int_{0}^\infty|w(x+y) |^2dy=\pi\int_{x}^\infty  |w(y)|^2dy\leq\pi\langle x \rangle^{-2}\int_{x}^\infty  \langle y \rangle^{2}|w(y)|^2dy,
$$
which yields (\ref{8}).

Integrate by part, we get
\begin{equation}
2izf(x;z)+w(x)=\int_{-\infty}^xe^{-2iz(x-y)}\partial_yw(y)dy,
\label{48}\end{equation}
and
$$(2iz)^{2}f(x;z)+2izw(x)+\partial_xw(x)=\int_{x}^\infty e^{-2iz(x-y)}\partial^{2}_yw(y)dy.$$
We can easily get
$$
\left\|2iz\int_{x}^\infty e^{-2iz(x-y)}w(y)dy+w(x) \right\|_{L^2_z}^2={\pi}\int_{x}^\infty|\partial_yw(y)|^2dy,
$$
and
\begin{equation}
\begin{split}
&\left\|(2iz)^{2}\int_{x}^\infty e^{-2iz(x-y)}w dy+2izw(x)+\partial_xw(x) \right\|_{L^2_z }^2 = {\pi}\int_{x}^\infty
|\partial^{2}_yw(y)|^2dy.
\end{split} \nonumber
\end{equation}
Then, we get the equation (\ref{6}) for $n=1,2$. We can get $n=3$ by similar analysis.

Combine (\ref{8}) with (\ref{48}) we can get (\ref{45}).
Replacing $w$ with $(y-x)w$ and repeating above process, we can get (\ref{44}).
\end{proof}

\begin{proposition}\label{p1}
Let $u\in H^{3}(\mathbb{R})\cap H^{1,1}(\mathbb{R})$, $1-K_{u}$ is an invertible operator in  $L_{x}^{\infty}(\mathbb{R})\otimes L_{z}^{2}(\mathbb{R})$.
\end{proposition}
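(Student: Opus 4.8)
The plan is to mirror the Neumann-series argument of Lemma \ref{lemma1}, but carried out in the mixed-norm space $X := L_x^\infty(\mathbb{R})\otimes L_z^2(\mathbb{R})$ instead of in $L^\infty(\mathbb{R})$. First I would record that the weighted hypothesis $u\in H^{1,1}(\mathbb{R})$ embeds into $L^1(\mathbb{R})$: writing $\int_{\mathbb{R}}|u|=\int_{\mathbb{R}}\langle x\rangle^{-1}\langle x\rangle|u|$ and applying Cauchy--Schwarz gives $\|u\|_{L^1(\mathbb{R})}\le\|\langle x\rangle^{-1}\|_{L^2(\mathbb{R})}\|\langle x\rangle u\|_{L^2(\mathbb{R})}<\infty$. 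Hence all the scalar $L^1$-type bounds underlying Lemma \ref{lemma1} remain at our disposal.

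The core step is to prove that $K_u$ is bounded on $X$. For $f\in X$ I would estimate the two components of $K_uf$ separately using Minkowski's integral inequality in the $z$-variable. Because $z$ is real, the oscillatory factor satisfies $|e^{2iz(y-x)}|=1$, so pulling the $L^2_z$-norm inside the $y$-integral yields, componentwise,
\[
\|(K_uf)_1(x,\cdot)\|_{L^2_z}\le\int_x^\infty|u(y)|\,\|f_2(y,\cdot)\|_{L^2_z}\,dy,\qquad
\|(K_uf)_2(x,\cdot)\|_{L^2_z}\le\int_x^\infty|u(-y)|\,\|f_1(y,\cdot)\|_{L^2_z}\,dy.
\]
Setting $w(y):=|u(y)|+|u(-y)|$, so that $\|w\|_{L^1(\mathbb{R})}=2\|u\|_{L^1(\mathbb{R})}$, and $N(y):=\|f_1(y,\cdot)\|_{L^2_z}+\|f_2(y,\cdot)\|_{L^2_z}$, these combine into $\|(K_uf)(x,\cdot)\|_{L^2_z}\le\int_x^\infty w(y)N(y)\,dy$, which shows $K_u\colon X\to X$ is bounded.

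Next I would exploit the Volterra structure (integration from $x$ to $\infty$) exactly as in Lemma \ref{lemma1}. Iterating the previous bound places the $n$-fold integral over the ordered simplex $x\le y_1\le\cdots\le y_n$, which produces the factorial gain
\[
\|K_u^n\|_{X\to X}\le\frac{(2\|u\|_{L^1(\mathbb{R})})^n}{n!}.
\]
Consequently the series $\sum_{n\ge0}K_u^n$ converges in the operator norm on $X$, its sum equals the inverse of $1-K_u$, and $\|(1-K_u)^{-1}\|_{X\to X}\le e^{2\|u\|_{L^1(\mathbb{R})}}$, establishing invertibility.

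I expect the only genuine subtlety to lie not in the Neumann estimate above, but in the role this result plays: the vector $e_1$ against which one inverts in $(I-K_u)m_1^+=e_1$ is a constant in $z$ and therefore does \emph{not} itself belong to $X$. This is precisely where the smoothing bound \eqref{5} of Lemma \ref{lemma3} enters, since it gives $\big\|\int_x^\infty e^{2iz(y-x)}u(-y)\,dy\big\|_{L^2_z}\le\sqrt{\pi}\,\|u\|_{L^2(\mathbb{R})}$ and hence $K_ue_1\in X$. One then reformulates the Jost equation as $(1-K_u)(m_1^+-e_1)=K_ue_1$, whose right-hand side now lies in $X$, so that the invertibility proved here yields $m_1^+-e_1\in X$. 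The main care is thus bookkeeping: subtracting the non-$L^2_z$ leading term $e_1$ is what makes the fixed-point problem well-posed in $X$, while the unimodularity of the oscillatory kernel reduces the operator bound itself to the scalar argument of Lemma \ref{lemma1}.
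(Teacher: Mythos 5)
Your proof is correct and is essentially the argument the paper intends: its one-line proof of Proposition \ref{p1} simply invokes Lemma \ref{lemma1}, i.e.\ the same Volterra/Neumann-series bound $\|K_u^n\|\leq c^n\|u\|_{L^1}^n/n!$ transferred to $L_x^{\infty}(\mathbb{R})\otimes L_z^{2}(\mathbb{R})$ using the unimodularity of $e^{2iz(y-x)}$ for real $z$, exactly as you carry out (your extra factor $2$ in the constant is immaterial). Your closing remark that $e_1\notin L_x^\infty\otimes L_z^2$, so one must invert against $K_ue_1$ via the bound (\ref{5}), is also precisely how the paper proceeds in Proposition \ref{prop25}.
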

\begin{proof}
The conclusion is easy to get from Lemma \ref{lemma1}.
\end{proof}
\begin{proposition} \label{prop25}
Let $u\in H^{3}(\mathbb{R})$, then for every $x\in \mathbb{R}$, we have:
\begin{equation}
\begin{aligned}
&m_{1}^{\pm}-e_{1} \in L_{z}^{2}(\mathbb{R}),\\
&(2iz)(m_{1}^{\pm}-e_{1})-p_{1}^\pm \in L_{z}^{2}(\mathbb{R}),\\
&(2iz)^{2}(m_{1}^{\pm}-e_{1})-(2iz)p_{1}^\pm-q_{1}^\pm \in L_{z}^{2}(\mathbb{R}),\\
&(2iz)^{3}(m_{1}^{\pm}-e_{1})-(2iz)^{2}p_{1}^\pm-(2iz)q_{1}^\pm -g_{1}^\pm\in L_{z}^{2}(\mathbb{R}),
\end{aligned}
\label{2.52}\end{equation}
and
\begin{equation}
\begin{aligned}
&m_{2}^{\pm}-e_{2} \in L_{z}^{2}(\mathbb{R}),\\
&(2iz)(m_{2}^{\pm}-e_{2})-p_{2}^\pm \in L_{z}^{2}(\mathbb{R}),\\
&(2iz)^{2}(m_{2}^{\pm}-e_{2})-(2iz)p_{2}^\pm-q_{2}^\pm \in L_{z}^{2}(\mathbb{R}).
\end{aligned}
\label{2.54}\end{equation}
If $u\in H^{1,1}(\mathbb{R})$ then for every $x\in \mathbb{R}^{+}$, we have:
\begin{equation}
(2iz)\partial_{z}m_{1}^{\pm}\in L_{z}^{2}(\mathbb{R}).
\label{49}\end{equation}
 \end{proposition}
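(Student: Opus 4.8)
The plan is to prove the three groups of statements—the leading-order memberships (\ref{2.52})--(\ref{2.54}) at order zero, the higher-order corrections, and the weighted $\partial_z$ estimate (\ref{49})—separately, in each case isolating the genuinely $z$-dependent parts and reducing the rest to the $L^2_z$ bounds of Lemma~\ref{lemma3}. First, for the leading memberships I would note that one cannot apply Lemma~\ref{lemma3} to $m_1^+$ directly, since $e_1$ is constant in $z$ and hence not in $L^2_z$; the correct object is the difference. From $(I-K_u)m_1^+=e_1$ in (\ref{2}) we get $(I-K_u)(m_1^+-e_1)=K_ue_1$, so $m_1^+-e_1=(I-K_u)^{-1}K_ue_1$. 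A direct computation gives $K_ue_1=(0,\ \sigma\int_x^\infty e^{-2iz(x-y)}u(-y)\,dy)^\top$, whose $L^\infty_x\otimes L^2_z$ norm is at most $\sqrt\pi\|u\|_{L^2}$ by (\ref{5}); since $(I-K_u)^{-1}$ is bounded on $L^\infty_x\otimes L^2_z$ by Proposition~\ref{p1}, we conclude $m_1^+-e_1\in L^\infty_x\otimes L^2_z$, hence $m_1^+(x,\cdot)-e_1\in L^2_z$ for every $x$. The statement for $m_2^+$ is identical, and the $-$ functions follow analogously or from the symmetry (\ref{symmetry}).

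Next, for the higher-order corrections I would bootstrap order by order on the component equations
\[
m_{11}^+=1-\int_x^\infty u(y)m_{21}^+\,dy,\qquad m_{21}^+=\sigma\int_x^\infty e^{-2iz(x-y)}u(-y)m_{11}^+\,dy,
\]
writing $m_{11}^+=1+a$, $m_{21}^+=b$ with $a,b\in L^\infty_x\otimes L^2_z$ from the previous step. To obtain $(2iz)(m_1^+-e_1)-p_1^+\in L^2_z$, I split $m_{11}^+=1+a$ in the second equation: the $z$-independent part $\sigma(2iz)\int_x^\infty e^{-2iz(x-y)}u(-y)\,dy$ reproduces the coefficient $-\sigma u(-x)=(p_1^+)_2$ up to an $L^2_z$ error by (\ref{6}) with $n=1$, while the remaining part is integrated by parts using $(2iz)e^{-2iz(x-y)}=\partial_y e^{-2iz(x-y)}$. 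The boundary terms are harmless (the one at $+\infty$ vanishes by decay of $u$ and $a$, the one at $y=x$ equals $-\sigma u(-x)a(x,z)\in L^2_z$), and in the leftover integral I substitute $\partial_y a=u(y)b$ to get the integrand $-\partial u(-y)\,a+u(-y)u(y)\,b$. Its $L^2_z$ norm is controlled by Minkowski's integral inequality,
\[
\Big\|\int_x^\infty e^{-2iz(x-y)}F(y,z)\,dy\Big\|_{L^2_z}\le\int_x^\infty\|F(y,\cdot)\|_{L^2_z}\,dy,
\]
together with $\partial u\in L^1$ and $\int|u(-y)u(y)|\,dy\le\|u\|_{L^2}^2$ (both holding since $u\in H^{1,1}$) and the a priori bound on $\|(a,b)\|_{L^\infty_x\otimes L^2_z}$. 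The first-component statement then follows from the first equation using the order-one control of $(2iz)b$ just obtained. The same scheme, multiplying by $(2iz)^2$ and $(2iz)^3$ and integrating by parts two and three times, produces the $q$- and $g$-coefficients of Lemma~\ref{lemma2} via (\ref{6}) with $n=2,3$, all coupling pieces being again absorbed by Minkowski's inequality and the lower-order $L^2_z$ bounds. Three integrations by parts (hence $\partial^3u\in L^2$, i.e. $u\in H^3$) are exactly what the four-term expansion of $m_1$ needs; $m_2$ needs only two.

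For the weighted estimate $(2iz)\partial_z m_1^\pm\in L^2_z$ with $x\in\mathbb{R}^+$ and $u\in H^{1,1}$ I would work in two steps. Differentiating (\ref{2}) in $z$ gives $\partial_z m_1^+=(I-K_u)^{-1}(\partial_z K_u)m_1^+$; the source has oscillatory entry $\sigma\int_x^\infty 2i(y-x)e^{-2iz(x-y)}u(-y)m_{11}^+\,dy$, and on the half-line $x\in\mathbb{R}^+$ the linear weight obeys $|y-x|\le\langle y\rangle$ for $y>x>0$, so its $L^\infty_x\otimes L^2_z$ norm is finite by the weighted estimates (\ref{8}), (\ref{45}) for the $z$-independent part and by Minkowski's inequality for the $m_{11}^+$-dependent part; hence $\partial_z m_1^+\in L^\infty_x\otimes L^2_z$ for $x\in\mathbb{R}^+$. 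To reach the weighted statement I then differentiate the relation for $m_{21}^+$ by the product rule, multiply by $(2iz)$, bound the $z$-independent leading term $\sigma(2iz)\int_x^\infty 2i(y-x)e^{-2iz(x-y)}u(-y)\,dy$ by (\ref{44}), and absorb the pieces carrying $a$ and $\partial_z m_{11}^+$ (both already controlled) by Minkowski's inequality together with (\ref{45}) and (\ref{8}). The $m_1^-$ case is the same with lower limit $-\infty$.

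The main obstacle is the coupling between components: Lemma~\ref{lemma3} applies verbatim only to $z$-independent weights, whereas in every integral the Jost function—depending on $z$—sits inside. The device that resolves this throughout is that $e^{-2iz(x-y)}$ is unimodular on the real axis, so Minkowski's integral inequality reduces the $L^2_z$ norm of the coupling integrals to $\int_x^\infty\|\cdot\|_{L^2_z}\,dy$, which is finite by the a priori mixed-norm bound from Proposition~\ref{p1} and the $L^1$-integrability of $u$ and $\partial u$; the successive orders then close by induction. In the $\partial_z$ estimate the additional difficulty is the non-decaying linear weight $(y-x)$ created by differentiation, which is precisely what forces the restriction $x\in\mathbb{R}^+$ and the use of the $\langle\cdot\rangle$-weighted bounds (\ref{8}), (\ref{45}), (\ref{44}).
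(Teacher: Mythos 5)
Your zeroth-order argument is exactly the paper's: write $(I-K_u)(m_1^+-e_1)=K_ue_1$, bound $K_ue_1$ by (\ref{5}), and invert via Proposition \ref{p1}. For the higher powers of $(2iz)$ in (\ref{2.52})--(\ref{2.54}) your route differs from the paper's: the paper applies $(I-K_u)$ to the whole remainder, e.g. $(I-K_u)[(2iz)(m_1^+-e_1)-p_1^+]=(2iz)K_ue_1-(I-K_u)p_1^+$, observes that the right-hand side is explicit in $u$ (no Jost factors survive), bounds it by Lemma \ref{lemma3}, and inverts; you instead bootstrap on the component Volterra equations with integration by parts, substituting the lower-order expansion of $m_{21}^+$ into the equation for $m_{11}^+$. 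Your variant is workable, with the caveat that your Minkowski step charges $\|\partial u\|_{L^1}$ (hence the $H^{1,1}$ part of the standing assumption), whereas the paper's version needs only $L^2$-type norms of $u$ and its derivatives at this stage.

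There is, however, a genuine gap in your treatment of (\ref{49}). In both of your steps the dangerous terms are the couplings that carry simultaneously the weight $(y-x)$ produced by $\partial_z$ and a Jost remainder, e.g. $\int_x^\infty 2i(y-x)e^{-2iz(x-y)}u(-y)\,(m_{11}^+(y,z)-1)\,dy$. Minkowski's inequality, which is the only tool you invoke for these pieces, gives the bound $2\int_x^\infty(y-x)|u(-y)|\,\|m_{11}^+(y,\cdot)-1\|_{L^2_z}\,dy$; since your first step only provides the \emph{unweighted} bound $\sup_y\|m_{11}^+(y,\cdot)-1\|_{L^2_z}<\infty$, you are forced to put the weight on the potential, i.e. you need $\langle\cdot\rangle u\in L^1(\mathbb{R})$. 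This does \emph{not} follow from $u\in H^{1,1}(\mathbb{R})$: for instance $u(x)=\langle x\rangle^{-2}\bigl(\log(e+|x|)\bigr)^{-1}$ satisfies $\langle x\rangle u,\ \langle x\rangle u'\in L^2$ but $\int\langle x\rangle|u|\,dx=\infty$. The same objection hits the pieces carrying $a$ and $(2iz)a-p_{11}^+$ in your second step. The missing ingredient is precisely what the paper proves first in (\ref{65}) and (\ref{64}): the weighted a priori estimates on the Jost remainders themselves,
\begin{equation}
\sup_{x\in(x_0,\infty)}\bigl\|\langle x\rangle(m_1^+-e_1)\bigr\|_{L^2_z}\lesssim\|u\|_{L^{2,1}},\qquad
\sup_{x\in(x_0,\infty)}\bigl\|\langle x\rangle[(2iz)(m_1^+-e_1)-p_1^+]\bigr\|_{L^2_z}\lesssim\|u\|_{H^{1,1}},\nonumber
\end{equation}
obtained by running (\ref{8}) and (\ref{45}) through the inverted integral equation. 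With these in hand, the weight is placed on the Jost factor via $(y-x)\le\langle y\rangle$ for $y>x>0$, and the potential is only charged $\|u\|_{L^1}$; this is how the paper closes (\ref{61}) and (\ref{70}) before inverting $I-K_u$. Your plan becomes correct once these two weighted estimates are inserted as a preliminary step before differentiating in $z$.
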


\begin{proof}
We only prove  (\ref{2.52}),(\ref{49}) while the others are similar to prove.

Recalling
$$
(1-K_{u})[m_{1}^{+}-e_{1}]=K_{u}e_{1},
$$
and from Proposition \ref{p1}, we know $1-K_{u}$ is an invertible operator in  $L_{x}^{\infty}(\mathbb{R})\otimes L_{z}^{2}(\mathbb{R})$.  Therefore  $m_{1}^{+}(x;\cdot)-e_{1}\in L^{2}(\mathbb{R})$  if  $K_{u}e_{1}(x;\cdot)\in L^{2}(\mathbb{R})$.  We write $K_{u}e_{1}$ in the following form:
$$
(K_{u}e_{1})(x;z)=[0,\sigma\int_{x}^{\infty}u(-y)e^{2iz(y-x)dy}]^{\top}.
$$
Recalling Lemma \ref{lemma3}, we know
$$
\sup _{x \in \mathbb{R}}\left\|\int_{x}^{\infty} \mathrm{e}^{2 i z(x-y)} \sigma u(-y) \mathrm{d} y\right\|_{L^{2}} \leq \sqrt{\pi}\|u\|_{L^{2}},
$$
then
\begin{equation}
\begin{aligned}
\|m_{1}^{+}-e_{1}\|_{L_{x}^{\infty}\otimes L_{z}^{2}}&\leq\|(1-K_{u})^{-1}
\|_{L_{x}^{\infty} \otimes L_{z}^{2} \to L_{x}^{\infty} \otimes L_{z}^{2} }\|K_{u}e_{1}\|_{L_{x}^{\infty} \otimes L_{z}^{2} }\\&
\leq c\|u\|_{L^{2}}.
\end{aligned}
\label{63}\end{equation}
Similar to above analysis, we can get
$$
(I-K_{u})[(2iz)(m_{1}^{+}-e_{1})-p_{1}^{+}]=(2iz)K_{u}e_{1}-(I-K_{u})p_{1}^{+},
$$
where
$$
(I-K_{u})p_{1}^{+}=[0,-u(-x)-\int_{x}^{\infty}e^{2iz(x_{1}-x_{2})}u(-x_{1})\int_{x_{1}}^{\infty}u(-x_{2})u(x_{2})dx_{2}dx_{1}]^{\top}.
$$
Then, we can get
\begin{equation}
\begin{aligned}
&(I-K_{u})[(2iz)(m_{1}^{+}-e_{1})-p_{1}^{+}]=2iz\int_{x}^{\infty}e^{-2iz(x-y)}\sigma u(-y)dy \\ &+\sigma u(-x)+\int_{x}^{\infty}e^{2iz(x_{1}-x)}u(-x_{1})\int_{x_{1}}^{\infty}u(x_{2})u(-x_{2})dx_{2}dx_{1}.
\end{aligned}
\label{53}\end{equation}
From Lemma \ref{lemma3} again, notice that $u\in H^2(\mathbb{R})$, then
\begin{equation}
\begin{aligned}
&\sup_{x\in\rm{R}}\left\| (2iz)\sigma\int_{x}^\infty e^{-2iz(x-y)}u(-y)dy+\sigma u(-x) \right\|_{L^2} \leq c\| \partial_x u\|_{L^2}
\end{aligned}\nonumber
\end{equation}
and
\begin{equation}
\begin{aligned}
&\sup_{x\in\rm{R}}\|\int_{x}^{\infty}e^{2iz(x_{1}-x)}u(-x_{1})\int_{x_{1}}^{\infty}u(-x_{2})u(x_{2})dx_{2}dx_{1}\|_{L^2} \leq c\|  u\|_{L^2}^{2}.
\end{aligned}\nonumber
\end{equation}
Where $c$ is a constant.

Combing with (\ref{53}) we have:
\begin{equation}
\begin{aligned}
&\sup _{x \in \rm{R}}\left\|\int_{x}^{\infty}(2iz) \mathrm{e}^{2 i z(x-y)} \sigma u(-y) \mathrm{d} y-(I-K_{u})p_{1}^{+}e_{2}\right\|_{L^{2}}\\
&\leq c(\|\partial_{x}u\|_{L^{2}}+\|u\|_{L^{2}}^{2}).
\end{aligned}\nonumber
\end{equation}
Inserting operator $I-K_{u}$ on $L^{2}_{x}(\mathbb{R})\otimes L^{2}_{z}(\mathbb{R})$, we get (\ref{2.52}) and (\ref{2.54}).

We derive on both side of  (\ref{2}),
$$
(1-K_{u})\partial_{z}m_{1}^{+}=(\partial_{z}K_{u})m_{1}^{+},
$$
then, we get
\begin{equation}
(1-K_{u})\partial_{z}m_{1}^{+}=(\partial_{z}K_{u})[m_{1}^{+}-e_{1}]+\partial_{z}K_{u}e_{1},
\label{61}\end{equation}
and
\begin{equation}
\begin{aligned}
&(I-K_{u})(2iz)\partial_{z}m_{1}^{+}\\
&=\partial_{z}K_{u}[(2iz)(m_{1}^{+}-e_{1})-p_{1}^{+}]+(2iz)\partial_{z}K_{u}e_{1}+\partial_{z}K_{u}p_{1}^{+}.
\end{aligned}
\label{70}\end{equation}

Recalling lemma \ref{lemma3}, we can get for every $x_{0}\in \mathbb{R^{+}}$,
\begin{equation}
\begin{aligned}
&\sup_{x\in(x_{0},\infty)}\|\langle x \rangle(m_{1}^{+}-e_{1})\|_{L^{2}}\\
&\leq c\|\int_{x}^\infty e^{-2iz(x-y)}\sigma u(-y)dy\|_{L^{2}}\leq c\|u\|_{L^{2,1}},
\end{aligned}
\label{65}\end{equation}
\begin{equation}
\begin{aligned}
&\sup_{x\in(x_{0},\infty)}\|\langle x \rangle(2iz)[(m_{1}^{+}-e_{1})-p_{1}^{+}]\|_{L^{2}}\\
&\leq c\|\int_{x}^\infty e^{-2iz(x-y)}\partial\sigma u(-y)dy\|_{L^{2}}\leq c\|u\|_{H^{1,1}}.
\end{aligned}
\label{64}\end{equation}
Combine (\ref{61}) with (\ref{65}) and recall lemma\ref{lemma3} we can get:
\begin{equation}
\begin{aligned}
&\|(I-K_{u})\partial_{z}m_{1}^{+}\|_{L^{2}}\leq\|\int_{x}^\infty 2i(y-x)\sigma u(-y)e^{-2iz(x-y)}(m_{11}^{+}-1)dy\|_{L^{2}}\\
&+\|\int_{x}^\infty 2i(y-x)e^{-2iz(x-y)}\sigma u(-y)dy\|_{L^{2}}\\
&\leq c\|u\|_{L^{1}}\sup_{x\in(x_{0},\infty)}\|\langle x \rangle(m_{1}^{+}-e_{1})\|_{L^{2}}+c\|u\|_{L^{2,1}}\leq c\|u\|_{H^{1,1}}.
\end{aligned}\nonumber
\end{equation}
Combine (\ref{64}) with (\ref{70}) and recall lemma\ref{lemma3} we can get:
\begin{equation}
\begin{aligned}
&\|(I-K_{u})(2iz)\partial_{z}m_{1}^{+}\|_{L^{2}}\leq\|\int_{x}^\infty 2i(y-x)e^{-2iz(x-y)}\sigma u(-y)[(2iz)(m_{11}^{+}-1)-p_{11}^{+}]dy\|_{L^{2}}\\
&+\|(2iz)\int_{x}^\infty 2i(y-x)e^{-2iz(x-y)}\sigma u(-y)dy\|_{L^{2}}+\|\int_{x}^\infty 2i(y-x)e^{-2iz(x-y)}\sigma u(-y)p_{11}^{+}dy\|_{L^{2}}\\
&\leq c\|u\|_{L^{1}}\sup_{x\in(x_{0},\infty)}\|\langle x \rangle[(2iz)(m_{1}^{+}-e_{1})-p_{1}^{+}]\|_{L^{2}}\leq c\|u\|_{H^{1,1}}.
\end{aligned}\nonumber
\end{equation}
Inserting operator $I-K_{u}$ on $L^{2}_{x}(\mathbb{R})\otimes L^{2}_{z}(\mathbb{R})$, we get (\ref{49}).
We finally improved the lemma.
\end{proof}

We has infect constructed the following maps:
\begin{equation}
\begin{aligned}
&H^{1,1}\ni u\rightarrow z\partial_{z}m_{1}^{\pm}(x;\cdot)\in  L^{2},\\
&H^{1,1}\ni u\rightarrow z\partial_{z}m_{2}^{\pm}(x;\cdot)\in  L^{2},\\
&H^{3}\ni u\rightarrow z^{3}(m_{1}^{\pm}(x;\cdot)-e_{1})-z^{2}p_{1}^\pm-zq_{1}^\pm -g_{1}^\pm\in  \L^{2},\\
&H^{3}\ni u\rightarrow z^{3}(m_{2}^{\pm}(x;\cdot)-e_{2})-z^{2}p_{2}^\pm-zq_{2}^\pm -g_{2}^\pm\in  \L^{2}.
\end{aligned}\nonumber
\end{equation}
 Next, we will show this map and remainders of the Jost function in function space $L^{\infty}_{x}(\mathbb{R})\otimes L^{2}_{z}(\mathbb{R})$ is Lipschitz continuous.
\begin{corollary}
Let $u,\tilde u\in  H^{1,1}(\mathbb{R})$ satisfy $\|u\|_{H^{1,1}(\mathbb{R})}\leq U$ and $\|\tilde{u}\|_{H^{1,1}(\mathbb{R })}\leq U$ for some $U>0$ then there is a positive U-dependent constant $C(U)$ such that for every $x\in \mathbb{R}$:
\begin{equation}
\left\|\check{m}_{1}^{\pm}(x ; \cdot)-\check{\tilde{m}}_{1}^{\pm}(x ; \cdot)\right\|_{L^2}+\left\|\check{m}_{2}^{\pm}(x ; \cdot)-\check{\tilde{m}}_{2}^{\pm}(x ; \cdot)\right\|_{L^2} \leq C(U)\|u-\tilde{u}\|_{H^{1,1}},\nonumber
\label{2.70}\end{equation}
where
\begin{align}
&\check{m}_{1}^{\pm}(x ; \cdot):=(2iz)\partial_{z}m_{1}^{\pm}(x;\cdot),\nonumber\\
&\check{m}_{2}^{\pm}(x ; \cdot):=(2iz)\partial_{z}m_{2}^{\pm}(x;\cdot).\nonumber
\end{align}
Moreover, if $u,\tilde u\in  H^{3}(\mathbb{R})$ satisfy $\|u\|_{ H^{3}(\mathbb{R})}\leq U$ and $\|\tilde{u}\|_{ H^{3}( \mathbb{R})}\leq U$ for some $U>0$ then there is a positive U-dependent constant $C(U)$ such that for every $x\in \mathbb{R}$:
$$
\left\|\hat{m}_{1}^{\pm}(x ; \cdot)-\hat{\tilde{m}}_{1}^{\pm}(x ; \cdot)\right\|_{L^2}+\left\|\hat{m}_{2}^{\pm}(x ; \cdot)-\hat{\tilde{m}}_{2}^{\pm}(x ; \cdot)\right\|_{L^2} \leq C(U)\|u-\tilde{u}\|_{H^{3}},\nonumber
$$
where
\begin{align}
&\hat{m}_{1}^{\pm}(x ; \cdot):=(2iz)^{3}(m_{1}^{\pm}(x;\cdot)-e_{1})-(2iz)^{2}p_{1}^\pm-(2iz)q_{1}^\pm -g_{1}^\pm,\nonumber\\
&\hat{m}_{2}^{\pm}(x ; \cdot):=(2iz)^{3}(m_{2}^{\pm}(x;\cdot)-e_{2})-(2iz)^{2}p_{2}^\pm-(2iz)q_{2}^\pm -g_{2}^\pm\nonumber.
\end{align}

\end{corollary}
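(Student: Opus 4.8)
The plan is to regard every object in the corollary as the image of $u$ under a composition of two elementary maps: the resolvent $(I-K_u)^{-1}$, which by Lemma \ref{lemma1} and Proposition \ref{p1} is invertible on $L^\infty_x(\mathbb{R})\otimes L^2_z(\mathbb{R})$ with the \emph{uniform} bound $\|(I-K_u)^{-1}\|\le e^{\|u\|_{L^1}}\le e^U$; and source terms that are multilinear in $u$ and its derivatives. The single structural fact driving the whole argument is that $K_u$ and $\partial_z K_u$ are linear in the potential, so that
\begin{equation}
K_u-K_{\tilde u}=K_{u-\tilde u},\qquad \partial_z K_u-\partial_z K_{\tilde u}=\partial_z K_{u-\tilde u}.\nonumber
\end{equation}
Combined with the second resolvent identity
\begin{equation}
(I-K_u)^{-1}-(I-K_{\tilde u})^{-1}=(I-K_u)^{-1}K_{u-\tilde u}(I-K_{\tilde u})^{-1},\nonumber
\end{equation}
this immediately yields the operator-level Lipschitz estimate $\|(I-K_u)^{-1}-(I-K_{\tilde u})^{-1}\|\le C(U)\|u-\tilde u\|_{L^1}$, because $K_{u-\tilde u}$ is controlled by $\|u-\tilde u\|_{L^1}\le\|u-\tilde u\|_{H^{1,1}}$ through Lemma \ref{lemma3}. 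Since all bounds in Lemma \ref{lemma3} carry a supremum over $x$, every estimate below will be uniform in $x$ automatically.

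For the check-estimate I would start from the identity \eqref{70} derived in Proposition \ref{prop25},
\begin{equation}
(I-K_u)\check m_1^+=\partial_z K_u\big[(2iz)(m_1^+-e_1)-p_1^+\big]+(2iz)\partial_z K_u e_1+\partial_z K_u p_1^+=:F_u,\nonumber
\end{equation}
and split the difference as $\check m_1^+-\check{\tilde m}_1^+=(I-K_u)^{-1}(F_u-F_{\tilde u})+\big[(I-K_u)^{-1}-(I-K_{\tilde u})^{-1}\big]F_{\tilde u}$. The second term is handled by the resolvent difference together with the uniform bound $\|F_{\tilde u}\|\le C(U)$ that is implicit in the proof of Proposition \ref{prop25} (there $\|(I-K_u)\check m_1^+\|\le c\|u\|_{H^{1,1}}$). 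For the first term I would expand $F_u-F_{\tilde u}$ using the linearity of $\partial_z K_u$, reducing it to $\partial_z K_{u-\tilde u}$ acting on a uniformly bounded argument plus $\partial_z K_{\tilde u}$ acting on the differences $(2iz)(m_1^+-\tilde m_1^+)-(p_1^+-\tilde p_1^+)$ and $p_1^+-\tilde p_1^+$. The crucial point is that these differences are themselves $\le C(U)\|u-\tilde u\|_{H^{1,1}}$ by running the same resolvent argument one order lower: the corollary is thus proved by \emph{bootstrapping} from the Lipschitz continuity of $m_1^\pm-e_1$ and of $(2iz)(m_1^\pm-e_1)-p_1^\pm$, each governed by an integral equation of the identical form.

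The hat-estimate follows the same template, now anchored at the integral equations for $(2iz)^k(m_1^\pm-e_1)$, $k\le3$, underlying \eqref{2.52}, and invoking Lemma \ref{lemma3} with $n=1,2,3$. The contributions of $m_2^\pm$ require no separate argument: the symmetry relations \eqref{symmetry} express $m_2^\pm(x,z)$ through $m_1^\mp(-x,-\bar z)$, so their Lipschitz bounds are inherited directly from those of $m_1^\mp$. The main obstacle is purely combinatorial, and lives in the hat-case source terms: the coefficients $p_1^\pm,q_1^\pm,g_1^\pm$ are nested multilinear functionals of $u,\partial u,\partial^2 u$ — $g_1^\pm$ already contains triple integrals of products such as $\int u(-x_1)u(x_1)\int u(-x_2)u(x_2)\int u(-x_3)u(x_3)$ — so each difference $g_1^\pm[u]-g_1^\pm[\tilde u]$ must be telescoped factor by factor via $ab-\tilde a\tilde b=(a-\tilde a)b+\tilde a(b-\tilde b)$, with every factor bounded uniformly using the $U$-bound and the embedding $H^3(\mathbb{R})\hookrightarrow L^\infty(\mathbb{R})$. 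This produces a finite sum of terms each $\le C(U)\|u-\tilde u\|_{H^3}$; the constants accumulate but stay $U$-dependent and $x$-independent, which is exactly what the statement demands.
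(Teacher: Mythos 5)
Your proposal is correct and follows essentially the same route as the paper's proof: the paper likewise combines the second resolvent identity with the linearity $K_u-K_{\tilde u}=K_{u-\tilde u}$, anchors the argument at the integral identities \eqref{2} and \eqref{70} from Proposition \ref{prop25}, invokes the Fourier bounds of Lemma \ref{lemma3}, and bootstraps from the Lipschitz estimates on $m_1^{+}-\tilde m_1^{+}$ (including the weighted bound \eqref{88}) up to the $z\partial_z$-level quantities. The only cosmetic differences are that you dispatch $m_2^{\pm}$ via the symmetry \eqref{symmetry} and spell out the telescoping of the multilinear coefficients $p_1^{\pm},q_1^{\pm},g_1^{\pm}$, where the paper simply says the remaining cases ``follow by repeating the same analysis.''
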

\begin{proof}
Recalling (\ref{2}), we can get
\begin{align}
&m_{1}^{+}-\tilde{m}_{1}^{+}=(I-K_{u})^{-1}[K_{u}e_{1}-\tilde{K_{u}}e_{1}]+((I-K_{u})^{-1}-(I-\tilde{K_{u}})^{-1})\tilde{K_{u}}e_{1}\nonumber\\
&=(I-K_{u})^{-1}[K_{u}e_{1}-\tilde{K_{u}}e_{1}]+(I-K_{u})^{-1}(\tilde{K_{u}}-K_{u})(I-\tilde{K_{u}})^{-1}\tilde{K_{u}}e_{1}\nonumber.
\end{align}
Where
$$
\sup_{x\in\mathbb{R}}\|K_{u}e_{1}-\tilde{K_{u}}e_{1}\|_{L^{2}}=\sup_{x\in\mathbb{R}}\|\int_{x}^{\infty}e^{2iz(y-x)}(u-\tilde{u})dy\|_{L^{2}}.
$$
Using lemma\ref{lemma3} we can get
$$
\sup_{x\in\mathbb{R}}\|K_{u}e_{1}-\tilde{K_{u}}e_{1}\|_{L^{2}}\leq c\sup_{x\in\mathbb{R}}\|u-\tilde{u}\|_{L^{2}}.
$$
Furthermore, for every $f\in L^{2}_{z}(\mathbb{R})\otimes L^{\infty}_{x}(\mathbb{R})$,
\begin{equation}
\|(K_{u}-\tilde{K_{u}}) f\|_{L_x^{\infty}\otimes L_z^2} \leq C_2(U)e^{\|u-\tilde{u}\|_{L^{1}}}\|f\|_{L_x^{\infty}\otimes L_z^2}.\label{2.4}
\end{equation}
Then, we can get
$$
\sup_{x\in \mathbb{R}}\|m_{1}^{+}-\tilde{m}_{1}^{+}\|_{L^{2}}\leq c\|u-\tilde{u}\|_{L^{2}}.
$$
Using lemma\ref{lemma3} again, we can also get
\begin{equation}
\sup_{x\in(x_{0},\infty)}\|\langle x \rangle (m_{1}^{+}-\tilde{m}_{1}^{+})\|_{L^{2}}\leq c\|u-\tilde{u}\|_{L^{2,1}}.
\label{88}\end{equation}
Direct calculation yields:
\begin{equation}
\begin{aligned}
\ \partial_{z}m_{1}^{+}-\partial_{z}\tilde{m}_{1}^{+}  &=(I-K_{u})^{-1}\partial_{z}K_{u}(m_{1}^{+}-e_{1})-(I-\tilde{K_{u}})^{-1} \partial_{z}\tilde{K_{u}}(\tilde{m}_{1}^{+}-e_{1})\\
&-[(I-K_{u})^{-1}\partial_{z}K_{u}e_{1}-(I-\tilde{K_{u}})^{-1} \partial_{z}\tilde{K_{u}}e_{1}]\\
&=(I-K_{u})^{-1}[\partial_{z}K_{u}(m_{1}^{+}-e_{1})-\partial_{z}\tilde{K_{u}}(\tilde{m}_{1}^{+}-e_{1})]\\
&+(I-K_{u})^{-1}(\tilde{K_{u}}-K_{u})(I-\tilde{K_{u}})^{-1} \partial_{z}\tilde{K_{u}}\tilde{m}_{1}^{+}\\
&-(I-K_{u})^{-1}(\partial_{z}K_{u}e_{1}-\partial_{z}\tilde{K_{u}}e_{1})\\
&+(I-K_{u})^{-1}(\tilde{K_{u}}-K_{u})(I-\tilde{K_{u}})^{-1}  \partial_{z}\tilde{K_{u}}e_{1}.\label{2.2}
\end{aligned}
\end{equation}

Recalling lemma(\ref{lemma3}) and (\ref{88}), we get
\begin{equation}
\|(2iz)\partial_{z}K_{u}m_{1}^{+}-(2iz)\partial_{z}\tilde{K_{u}}\tilde{m}_{1}^{+}\|_{L^{2}}\leq C_{1}(U)\|u-\tilde{u}\|_{H^{1,1}},\label{2.3}
\end{equation}
Where $C_{1}(U)$ is an U-dependent positive constant. We notice that $1-K_{u}$ is an invertible operator in  $L_{x}^{\infty}(\mathbb{R})\otimes L_{z}^{2}(\mathbb{R})$ and recall (\ref{2.4}) we get the  bound (\ref{2.70}). The other  follow by repeating the same analysis.
\end{proof}

\subsection{Lipschitz continuity of scattering data  }

The propose of  Lemma \ref{lemma2} and Proposition \ref{prop25}  are to introduce the stand form of the scattering relations and study properties of that.
\begin{lemma}
If $u\in H^{1,1}(\mathbb{R}) $, then the function $a(z)$ is continued analytically in $\mathbb{C}^{+}$. In addition, we have:
$$
a(z)-1,d(z)-1,b(z) \in H^{1,1}(\mathbb{R}).
$$
Moreover, if $u\in H^{3}(\mathbb{R})$, then
$$
b(z) \in L^{2,3}(\mathbb{R}).
$$
\end{lemma}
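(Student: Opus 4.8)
The plan is to replace the Wronskian formulas \eqref{a}--\eqref{b} by Fourier-type integral representations and then feed these into the estimates of Lemma \ref{lemma3}. Analyticity is the easy part: for each fixed $x$ the columns $m_1^+(x,\cdot)$ and $m_2^-(x,\cdot)$ are analytic in $\mathbb{C}^+$, so $a(z)=\det[m_1^+(x,z),m_2^-(x,z)]$ is analytic there, and since $a$ is $x$-independent with $a\to1$ as $z\to\infty$ in $\overline{\mathbb{C}^+}$, this furnishes the continuation; $d$ is treated identically from \eqref{d} using analyticity in $\mathbb{C}^-$. Substituting the Volterra equations \eqref{m_{1}} (in the component form used for Lemma \ref{lemma2}) into \eqref{b} and letting $x\to-\infty$ yields the representation $b(z)=\sigma\int_{\mathbb{R}}e^{2izy}u(-y)\,m_{11}^+(y,z)\,dy$, a genuine two-sided Fourier integral; in the same way $a(z)-1$ and $d(z)-1$ become one-sided (correlation) integrals of the schematic form $\int e^{2iz(y-s)}u(\pm y)u(\pm s)\,m(s,z)\,ds\,dy$ taken over $\{s\le y\}$, with the Jost entries uniformly bounded in $z$ by Lemma \ref{lemma1}.

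I would then split each integrand into a leading, $z$-independent piece plus a correction carrying the factor $m-e_j$. For $b$, writing $m_{11}^+=1+(m_{11}^+-1)$ turns the leading piece into the pure Fourier transform $\sigma\int_{\mathbb{R}}e^{2izy}u(-y)\,dy$, to which estimate \eqref{6} applies directly: since the integral is over all of $\mathbb{R}$, integration by parts produces no boundary term, so taking $n=1$ gives the $L^{2,1}$-bound and, under $u\in H^3(\mathbb{R})$, taking $n=2,3$ gives $(2iz)^3 b\in L^2_z$, hence $b\in L^{2,3}(\mathbb{R})$. The correction $\sigma\int e^{2izy}u(-y)(m_{11}^+-1)\,dy$ has a $z$-dependent integrand, so Lemma \ref{lemma3} cannot be applied verbatim; here I would instead use the $L^2_z$-membership of the remainders $(2iz)^k(m_1^\pm-e_1)-\dots$ from Proposition \ref{prop25}, together with the boundedness of $(I-K_u)^{-1}$ on $L^\infty_x\otimes L^2_z$ (as in the proof of that proposition), to absorb the correction into $L^2_z$. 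The claims for $c$ follow at once via $c(z)=-\sigma\overline{b(-\bar z)}$, and the $L^2$-membership of $a-1,d-1$ is obtained from the same splitting and the bilinearity of the determinant.

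For the remaining ($z$-derivative) part of the $H^{1,1}$-statement I would differentiate the representations in $z$; since $\partial_z$ on the kernel $e^{2iz(y-s)}$ produces the weight $(y-s)$, the relevant tools are the $\langle x\rangle$-weighted estimates \eqref{8}, \eqref{45}, \eqref{44} together with \eqref{49}, and it is exactly here that the hypothesis $u\in H^{1,1}(\mathbb{R})$ (the weight $xu\in L^2$) is consumed. The hardest part, I expect, is the weighted $L^{2,1}$-control of $a-1$ and $d-1$: because their representations are one-sided integrals $\int_0^\infty e^{2izw}\phi(w)\,dw$ with $\phi(w)=\int_{\mathbb{R}}u(-y)u(y-w)\,dy$, a single integration by parts throws off the boundary term $-\phi(0)=-\int_{\mathbb{R}}u(-y)u(y)\,dy$, in sharp contrast with the two-sided integral for $b$. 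Controlling this term is the crux, and I would attack it by exploiting the symmetry relations \eqref{symmetry} and the precise order-$z^{-1}$ coefficients recorded in Lemma \ref{lemma2}, keeping track of how the boundary contribution interacts with the accompanying Jost-function remainders so that the weighted norm stays finite.
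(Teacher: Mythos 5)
Your analyticity argument and your entire treatment of $b(z)$ coincide with the paper's own proof: the same representation $b(z)=\sigma\int_{\mathbb{R}}e^{2izy}u(-y)m_{11}^{+}(y;z)\,dy$ as in (\ref{2.79}), the same splitting of $m_{11}^{+}$ into the expansion coefficients $p_{11}^{+},q_{11}^{+},g_{11}^{+}$ of Lemma \ref{lemma2} plus the $L^{2}_{z}$-remainders of Proposition \ref{prop25}, with Plancherel handling the pure Fourier pieces and $\|u\|_{L^{1}}\sup_{x}\|\cdot\|_{L^{2}_{z}}$ (Minkowski-type) bounds absorbing the corrections; this is exactly what the paper does in (\ref{80}) and (\ref{81}), including the use of (\ref{49}) for the $z$-derivative part. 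So for $b$ (and hence $c$) your proposal is sound and essentially identical to the paper.

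The genuine gap is in the step you yourself flag as ``the crux'': the boundary term $\phi(0)=\int_{\mathbb{R}}u(-y)u(y)\,dy$ arising from the one-sided correlation integral for $a-1$ and $d-1$. This term cannot be controlled by any bookkeeping of the symmetries (\ref{symmetry}) or the Jost remainders, because it is precisely the coefficient of the $z^{-1}$ tail of $a-1$. Indeed, by Lemma \ref{lemma2} one has $m_{21}^{+}(y;z)=-\sigma u(-y)(2iz)^{-1}+\mathcal{O}(z^{-2})$, so (\ref{a}), equivalently (\ref{2.78}), gives
$$\lim_{|z|\rightarrow\infty}2iz\,\bigl(a(z)-1\bigr)=\sigma\int_{\mathbb{R}}u(y)u(-y)\,dy,$$
which is generically nonzero (for real even $u$ it equals $\sigma\|u\|_{L^{2}}^{2}$). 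Hence $z(a(z)-1)$ tends to a nonzero constant, $a-1\notin L^{2,1}(\mathbb{R})$, and the weighted part of the claim $a-1,\,d-1\in H^{1,1}(\mathbb{R})$ fails generically: the cancellation you hope for does not exist. Note that the paper's proof is silent exactly here --- it works out $b$ in detail and dismisses $a,d$ with ``can be proved similar'' --- so what you have uncovered is an overstatement in the lemma itself rather than a defect peculiar to your route. What your argument (and the paper's) does yield is $a-1,\,d-1\in H^{1}(\mathbb{R})$ unweighted, together with the full weighted statements for $b$; and an inspection of the subsequent lemma on $r_{1}=b/a$, $r_{2}=c/d$ shows that this weaker statement, plus the nonvanishing of $a$ and $d$, is all that is actually used (the weights always land on $b$). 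You should therefore either restrict the claim for $a-1,d-1$ to $H^{1}(\mathbb{R})$, or add the nongeneric hypothesis $\int_{\mathbb{R}}u(y)u(-y)\,dy=0$.
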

\begin{proof}

By using (\ref{m}), (\ref{a})  and (\ref{b}),  we get
\begin{equation}
b(z) =\sigma\int_{\mathbb{R}}e^{2izy}u(-y)m_{11}^{+}dy,
\label{2.79}\end{equation}
\begin{equation}
\partial_{z}b(z)=\sigma\int_{\mathbb{R}}e^{2izy}u(-y)\partial_{z}m_{11}^{+}dy+\sigma\int_{\mathbb{R}}2iye^{2izy}u(-y)m_{11}^{+}dy,
\label{78}\end{equation}
\begin{equation}
\begin{aligned}
&a(z)-1 =-\int_{\mathbb{R}}u(y)m_{21}^{+}dy,\\
&d(z)-1 =\sigma\int_{\mathbb{R}}u(-y)m_{12}^{+}dy,
\end{aligned}
\label{2.78}\end{equation}
\begin{equation}
\begin{aligned}
&\partial_{z}a(z)=-\int_{\mathbb{R}}u(y)\partial_{z}m_{21}^{+}dy,\\
&\partial_{z}d(z)=\sigma\int_{\mathbb{R}}u(-y)\partial_{z}m_{21}^{+}dy.
\end{aligned}
\label{76}\end{equation}

We can easily obtain the following limit for the scattering coefficient $a(z)$ along a contour in $\mathbb{C}^{+}$ extended to $Imz\rightarrow \infty$:
$$
a(z)\rightarrow 1
$$
In order to prove $b(z)\in L_{z}^{2,3}(\mathbb{R})$, we rewrite (\ref{b}) as following form:
\begin{equation}
\begin{aligned}
&(2iz)^{3}b(z)=\sigma\int_{\mathbb{R}}e^{2izy}u[(2iz)^{3}(m_{11}^{+}-1)-(2iz)^{2}p_{11}^{+}-(2iz)q_{11}^{+}-g_{11}^{+}]dy\\
&+(2iz)^{3}\sigma\int_{\mathbb{R}}e^{2izy}udy+(2iz)^{2}\sigma\int_{\mathbb{R}}e^{2izy}up_{11}^{+}dy+(2iz)\sigma\int_{\mathbb{R}}e^{2izy}uq_{11}^{+}dy\\
&+\sigma\int_{\mathbb{R}}e^{2izy}ug_{11}^{+}dy.
\end{aligned}
\label{80}\end{equation}
Using lemma\ref{lemma3}, we get

\begin{align}
&\|\sigma\int_{\mathbb{R}}e^{2izy}u[(2iz)^{3}(m_{11}^{+}-1)-(2iz)^{2}p_{11}^{+}-(2iz)q_{11}^{+}-g_{11}^{+}]dy\|_{L^{2}}\nonumber\\
&\leq c\|u\|_{L^{1}}\sup_{x\in\mathbb{R}}\|(2iz)^{3}(m_{11}^{+}-1)-(2iz)^{2}p_{11}^{+}-(2iz)q_{11}^{+}-g_{11}^{+}\|_{L^{2}}\nonumber\\
&\leq c\|u\|_{L^{1}}\|u\|_{H^{3}}\nonumber.
\end{align}

We proved the first term of (\ref{80}) is bounded in $L^{2}$ space. For the other terms of (\ref{80}), using  Plancherel's formula, we can get:
\begin{align}
&\|(2iz)^{3}\int_{\mathbb{R}}e^{2izy}udy\|_{L^{2}}\leq c\|u\|_{H^{3}}\nonumber,\\
&\|(2iz)^{2}\int_{\mathbb{R}}e^{2izy}up_{11}^{+}dy\|_{L^{2}}\leq c\|u\|_{H^{3}}\nonumber,\\
&\|(2iz)\int_{\mathbb{R}}e^{2izy}uq_{11}^{+}dy\|_{L^{2}}\|_{L^{2}}\leq c\|u\|_{H^{3}}\nonumber,\\
&\|\int_{\mathbb{R}}e^{2izy}ug_{11}^{+}dy\|_{L^{2}}\|_{L^{2}}\leq c\|u\|_{H^{3}}\nonumber.
\end{align}
Then we get $b(z) \in L^{2,3}(\mathbb{R})$.

In order to prove $\partial_{z}b(z)\in L_{z}^{2,1}(\mathbb{R})$, we rewrite (\ref{78}) as following form:
\begin{equation}
\begin{aligned}
(2iz)\partial_{z}b(z)&=\sigma\int_{\mathbb{R}}e^{2izy}u(2iz)\partial_{z}m_{11}^{+}+\sigma\int_{\mathbb{R}}(2iy)e^{2izy}u[(2iz)(m_{11}^{+}-1)-p_{11}^{+}]dy\\
&+(2iz)\sigma\int_{\mathbb{R}}(2iy)e^{2izy}udy+\sigma\int_{\mathbb{R}}(2iy)e^{2izy}up_{11}^{+}dy.
\end{aligned}
\label{81}\end{equation}
Using Lemma  \ref{lemma3}  again, we get
\begin{align}
&\|\int_{\mathbb{R}}e^{2izy}u(2iz)\partial_{z}m_{11}^{+}dy\|_{L^{2}}\leq c\|u\|_{L^{1}}\sup_{x\in\mathbb{R}}\|(2iz)\partial_{z}m_{11}^{+}\|_{L^{2}}\nonumber\\
&\leq c\|u\|_{L^{1}}\|u\|_{H^{1,1}}\nonumber.
\end{align}
We proved the first term of (\ref{81}) is bounded in $L^{2}$ space. For the other terms of (\ref{81}), using  Plancherel's formula, we can get:

\begin{align}
&\|(2iz)\int_{\mathbb{R}}e^{2izy}(2iy)udy\|_{L^{2}}\leq c\|u\|_{H^{1,1}}\nonumber,\\
&\|\int_{\mathbb{R}}e^{2izy}(2iy)up_{11}^{+}dy\|_{L^{2}}\leq c\|u\|_{H^{1,1}}\nonumber.
\end{align}
We get $\partial_{z}b(z) \in L^{2,1}(\mathbb{R})$.
Then we have proved the bounds of $b(z)$. The conclusion of $a(z),d(z)$ can be proved similar.
\end{proof}
It is easy to find that we have constructed the following two maps:
$$
H^{1,1}(\mathbb{R})\ni u\rightarrow a(z)-1,d(z)-1, b(z)\in H^{1,1}(\mathbb{R})
$$
and
$$
 H^{3}(\mathbb{R})\ni u\rightarrow  b(z)\in L^{2,3}(\mathbb{R}).
$$
We will show the two maps are Lipschitz continuous.
\begin{corollary}
Let $u\in  H^{1,1}(\mathbb{R})\cap H^{3}(\mathbb{R})$ then,
$$
H^{1,1}(\mathbb{R})\ni u\rightarrow a(z)-1,d(z)-1, b(z)\in H_{z}^{1,1}(\mathbb{R})
$$
and
$$
H^{3}(\mathbb{R})\ni u\rightarrow  b(z)\in L_{z}^{2,3}(\mathbb{R})
$$
are Lipschitz continuous.
\end{corollary}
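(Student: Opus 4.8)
The plan is to mirror the argument just used for the Jost functions, now pushing those Lipschitz bounds through the explicit integral representations of the scattering data. Fix $u,\tilde u$ with $\|u\|_{H^{1,1}\cap H^{3}},\|\tilde u\|_{H^{1,1}\cap H^{3}}\le U$ and abbreviate $\Delta b=b-\tilde b$, $\Delta a=a-\tilde a$, $\Delta d=d-\tilde d$, and $\Delta m=m_{11}^{+}-\tilde m_{11}^{+}$. First I would, for each of the representations \eqref{2.79}, \eqref{78}, \eqref{2.78}, \eqref{76}, split the difference by the add-and-subtract device into a \emph{potential-difference} piece carrying $u-\tilde u$ against the unperturbed Jost function, and a \emph{Jost-difference} piece carrying $\tilde u$ against $\Delta m$. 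For example, from \eqref{2.79} one gets $\Delta b(z)=\sigma\int_{\mathbb{R}}e^{2izy}(u-\tilde u)(-y)\,m_{11}^{+}\,dy+\sigma\int_{\mathbb{R}}e^{2izy}\tilde u(-y)\,\Delta m\,dy$, and analogously for $a-1$, $d-1$ with $m_{21}^{+},m_{12}^{+}$ in place of $m_{11}^{+}$.

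Second, I would bound each piece in the appropriate weighted $L^{2}_{z}$ norm. The potential-difference pieces are handled directly by Lemma \ref{lemma3} and Plancherel, exactly as in the existence-of-bounds lemma above, converting the oscillatory integral against $u-\tilde u$ into $\|u-\tilde u\|$ in the relevant weighted Sobolev norm while pulling out the bounded factor $\|m_{11}^{+}\|_{L^{\infty}_{x}}\le e^{\|u\|_{L^{1}}}\le C(U)$ supplied by Lemma \ref{lemma1}. The Jost-difference pieces are controlled by the mechanism $\|\int_{\mathbb{R}}e^{2izy}\tilde u(-y)\,g(y;z)\,dy\|_{L^{2}_{z}}\le c\|\tilde u\|_{L^{1}}\sup_{y}\|g(y;\cdot)\|_{L^{2}_{z}}$, into which I feed the Jost Lipschitz estimates just established, namely $\|\check m_{1}^{\pm}-\check{\tilde m}_{1}^{\pm}\|_{L^{2}}\le C(U)\|u-\tilde u\|_{H^{1,1}}$ for the $H^{1,1}_{z}$ claims and $\|\hat m_{1}^{\pm}-\hat{\tilde m}_{1}^{\pm}\|_{L^{2}}\le C(U)\|u-\tilde u\|_{H^{3}}$ for the $L^{2,3}_{z}$ claim. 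For $\partial_{z}b$ in $L^{2,1}_{z}$ I would run the same split on \eqref{78} and \eqref{81}, invoking the weighted bounds \eqref{44} and \eqref{45} together with the Lipschitz estimate for $(2iz)\partial_{z}m_{11}^{+}$.

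The delicate step, and the main obstacle, is the $L^{2,3}_{z}$ estimate for $\Delta b$, which must be run from \eqref{80}, where $(2iz)^{3}b$ is written as an oscillatory integral against the high-order remainder $\hat m_{11}^{+}=(2iz)^{3}(m_{11}^{+}-1)-(2iz)^{2}p_{11}^{+}-(2iz)q_{11}^{+}-g_{11}^{+}$ plus lower-order Fourier-type terms built from the coefficients $p_{11}^{+},q_{11}^{+},g_{11}^{+}$. Differencing this expression produces, besides the remainder-difference term controlled by $\|\hat m_{1}^{\pm}-\hat{\tilde m}_{1}^{\pm}\|_{L^{2}}$, several terms in which the \emph{coefficients themselves} must be differenced; since $p_{1},q_{1},g_{1}$ are multilinear (up to cubic) in $u$ and its derivatives, their differences telescope into sums of products with one factor $u-\tilde u$ and the remaining factors bounded by $U$, so each is Lipschitz on $\{\|u\|_{H^{3}}\le U\}$. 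The real work is the bookkeeping of the weight $(2iz)^{3}$, reinterpreted via Plancherel as three $x$-derivatives distributed across these products, while keeping every constant uniform in $U$. Once each piece is bounded by $C(U)$ times $\|u-\tilde u\|_{H^{1,1}}$, respectively $\|u-\tilde u\|_{H^{3}}$, inserting the invertible operator $I-K_{u}$ on $L^{\infty}_{x}\otimes L^{2}_{z}$ as in \eqref{2.4} and collecting terms yields the two claimed Lipschitz bounds; the remaining maps for $a-1$ and $d-1$ follow by repeating the identical analysis, and the coefficient $c(z)$ is then handled through the symmetry $c(z)=-\sigma\overline{b(-\bar z)}$.
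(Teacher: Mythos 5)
Your proposal is correct and follows essentially the same route as the paper: the paper's own proof is a one-line appeal to the representations (\ref{2.79}), (\ref{78}), (\ref{2.78}), (\ref{76}) together with the Lipschitz continuity of the Jost functions, and your argument is precisely the detailed execution of that plan (add-and-subtract splitting, Lemma \ref{lemma3}/Plancherel for the $u-\tilde u$ pieces, Minkowski plus the Jost-difference bounds for the rest, and telescoping of the multilinear coefficients $p_{11}^{+},q_{11}^{+},g_{11}^{+}$ in the $L^{2,3}$ estimate). Your extra remarks about re-inserting $(I-K_{u})^{-1}$ and about $c(z)$ are redundant but harmless.
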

\begin{proof}
From the representations (\ref{2.79}), (\ref{78}), (\ref{2.78}), (\ref{76}) and the Lipschitz continuity of  the Jost function $m_{1}^{\pm}$ and $m_{2}^{\pm}$, we can obtain the Lipschitz continuity of the scattering coefficients.
\end{proof}

For the reflection coefficients $r(z)$, we have the following results:
\begin{lemma}
If $ u\in  H^{1,1}(\mathbb{R})\cap H^{3}(\mathbb{R})$  then we have $$r_{1,2}(z)\in H^{1,1}(\mathbb{R})\cap L^{2,3}(\mathbb{R}).$$
 As well, the mapping
$$
 H^{1,1}(\mathbb{R})\cap H^{3}(\mathbb{R})\ni u \mapsto r_{1,2}\in H^{1,1}(\mathbb{R})\cap L^{2,3}(\mathbb{R})
$$
is Lipschitz continuous.
\end{lemma}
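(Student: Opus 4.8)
The plan is to deduce both the mapping properties and the Lipschitz continuity of $r_{1,2}$ as purely algebraic consequences of the statements already proved for $a,d,b$ in the preceding lemma and corollary, once the denominators are shown to be bounded away from zero. Recall that
$$
r_1(z)=\frac{b(z)}{a(z)},\qquad r_2(z)=-\sigma\,\frac{\overline{b(-z)}}{d(z)},\qquad z\in\mathbb{R},
$$
and that we already have $a-1,\,d-1,\,b\in H^{1,1}(\mathbb{R})$, with $b\in L^{2,3}(\mathbb{R})$ when $u\in H^3(\mathbb{R})$, all depending Lipschitz-continuously on $u$. The whole argument rests on the observation that $H^{1,1}(\mathbb{R})$ is a Banach algebra (it embeds in $H^1\hookrightarrow L^\infty$, and the Leibniz rule preserves the weighted $L^2$ bounds), together with the fact that multiplication by an $L^\infty$ function maps $L^{2,3}$ into itself.

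First I would establish that there is $\delta=\delta(U)>0$ with $|a(z)|\geq\delta$ and $|d(z)|\geq\delta$ for all $z\in\mathbb{R}$. Since $a-1\in H^{1,1}\hookrightarrow L^\infty$ and $a(z)\to 1$ as $|z|\to\infty$, the hypothesis of no eigenvalues or resonances rules out real zeros of $a$, and under the $L^1$ small-norm assumption one has the quantitative bound $|a(z)|\geq 1-\|a-1\|_{L^\infty}\geq \tfrac12$, since $a-1=\mathcal{O}(\|u\|_{L^1}^2)$; the same reasoning applies to $d$. This is the crux and the step I expect to be the main obstacle, because every quotient estimate below collapses without a uniform-in-$U$ lower bound on $|a|$ and $|d|$.

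Granting Step 1, I would record that $a^{-1}-1=(1-a)/a\in H^{1,1}$, because $1-a\in H^{1,1}$, $1/a\in L^\infty$, and $(a^{-1})'=-a'/a^2$ with $a'\in L^{2,1}$ and $1/a^2\in L^\infty$. Writing
$$
r_1=b\,a^{-1}=b+b\,(a^{-1}-1),
$$
exhibits $r_1$ as a sum of products of $H^{1,1}$ functions, hence $r_1\in H^{1,1}$ by the algebra property. For the weighted statement, $\langle z\rangle^3 r_1=(\langle z\rangle^3 b)\,a^{-1}\in L^2$ since $\langle z\rangle^3 b\in L^2$ and $a^{-1}\in L^\infty$, so $r_1\in L^{2,3}$. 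For $r_2$ I would use that reflection $z\mapsto -z$ and complex conjugation are isometries of both $H^{1,1}$ and $L^{2,3}$, so $\overline{b(-z)}$ lies in $H^{1,1}\cap L^{2,3}$ with the same norms as $b$, and then repeat the argument with $d$ in place of $a$.

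Finally, for the Lipschitz bound I would take $u,\tilde u$ with $\|u\|,\|\tilde u\|\leq U$ and use the telescoping identity
$$
r_1-\tilde r_1=\frac{b\,(\tilde a-a)}{a\,\tilde a}+\frac{b-\tilde b}{\tilde a},
$$
estimating each term in $H^{1,1}$ (respectively $L^{2,3}$) through the Banach-algebra property: $b$ and $(a\tilde a)^{-1}$ are bounded uniformly in $U$ by Step 1, while $\tilde a-a$ and $b-\tilde b$ are controlled by $\|u-\tilde u\|_{H^{1,1}}$ (respectively $\|u-\tilde u\|_{H^3}$) via the Lipschitz estimates of the previous corollary. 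This gives $\|r_1-\tilde r_1\|_{H^{1,1}\cap L^{2,3}}\leq C(U)\,\|u-\tilde u\|_{H^{1,1}\cap H^3}$, and the analogous identity with $d$ and $\overline{b(-z)}$ yields the same estimate for $r_2$. Apart from the uniform lower bound of Step 1, every remaining ingredient is routine bookkeeping with the $H^{1,1}$ algebra and the action of $L^\infty$ multipliers on weighted $L^2$ spaces.
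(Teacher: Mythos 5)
Your proposal is correct and follows essentially the same route as the paper: the paper's entire proof is the telescoping identity $r_{1}-\tilde r_{1}=\frac{b-\tilde b}{a}+\frac{\tilde b\left((\tilde a-1)-(a-1)\right)}{a\tilde a}$ (algebraically the same decomposition as yours) combined with the Lipschitz continuity of $a-1$, $d-1$, $b$ from the preceding corollary. Your Step 1 (the uniform lower bound on $|a|$ and $|d|$) and the $H^{1,1}$ Banach-algebra bookkeeping are details the paper leaves implicit — it only establishes the no-zeros bound for $a$ and $d$ in a \emph{later} small-norm lemma — so on the point you correctly identified as the crux, your write-up is actually more complete than the paper's.
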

\begin{proof}

Let $r_{1}$ and $\tilde r_{1}$ denote the reflection coefficients corresponding to $u$ and $\tilde u$, respectively.
Owing to
$$
r_{1}-\tilde r_{1}=\frac{b-\tilde b}{a}+\frac{\tilde b((\tilde a-1)-(a-1))}{a\tilde a},
$$
the Lipschitz continuity of $r_{1}$ follows from the Lipschitz continuity of $a-1$ and $b$ and we can get similar conclusion of $r_{2}$ .
\end{proof}
\begin{lemma}
 If $u\in  H^{1,1}\cap H^{3} $ with $L^1$-norm such that  $  \| u\|_{L^1 } \exp \{2\| u\|_{L^1 }\}<1$,
 then the spectral problem  admits no eigenvalues or resonances, the scattering coefficients $a(z)$ and $d(  z)$ admit no zeros in $\mathbb{C}^+\cup \mathbb{R}$ and $\mathbb{C}^-\cup \mathbb{R}$, respectively.
\end{lemma}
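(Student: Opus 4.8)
The plan is to show directly that $a(z)$ cannot vanish anywhere in $\overline{\mathbb{C}^+}$ by establishing the quantitative bound $|a(z)-1|<1$ uniformly on the closed upper half-plane. Since $a$ is analytic in $\mathbb{C}^+$, continuous up to $\mathbb{R}$, and tends to $1$ at infinity (as recorded after (\ref{b})), a zero of $a$ in the open region $\mathbb{C}^+$ is precisely an eigenvalue, while a zero on $\mathbb{R}$ is precisely a resonance; hence the single bound $|a(z)-1|<1$ simultaneously rules out both. The corresponding statement for $d(z)$ on $\overline{\mathbb{C}^-}$ will follow from the same computation applied to the Jost column $m_2^+$, or from the symmetry $d(z)=\overline{d(-\bar z)}$.

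First I would start from the integral representation already derived in (\ref{2.78}),
$$ a(z)-1=-\int_{\mathbb{R}} u(y)\, m_{21}^+(y;z)\,dy, $$
so that $|a(z)-1|\le \|u\|_{L^1}\sup_{x}|m_{21}^+(x;z)|$. The heart of the matter is to bound $m_{21}^+$ uniformly for $z\in\overline{\mathbb{C}^+}$. Writing $m_1^+=(I-K_u)^{-1}e_1$ from (\ref{2}) and $m_1^+-e_1=(I-K_u)^{-1}K_u e_1$, and noting that $K_u e_1=(0,\ \sigma\int_x^\infty e^{2iz(y-x)}u(-y)\,dy)^\top$ obeys $\|K_u e_1\|_{L^\infty}\le\|u\|_{L^1}$ because $|e^{2iz(y-x)}|\le 1$ for $y\ge x$ and $z\in\overline{\mathbb{C}^+}$, the operator estimate of Lemma \ref{lemma1} gives $\|(I-K_u)^{-1}\|\le e^{\|u\|_{L^1}}$, whence
$$ \sup_{x}|m_{21}^+(x;z)|\le \|m_1^+-e_1\|_{L^\infty}\le \|u\|_{L^1}\,e^{\|u\|_{L^1}}, \qquad z\in\overline{\mathbb{C}^+}. $$

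Combining the two displays yields the chain
$$ |a(z)-1|\le \|u\|_{L^1}^2\,e^{\|u\|_{L^1}}\le \|u\|_{L^1}\,e^{2\|u\|_{L^1}}<1, $$
where the middle inequality uses the trivial bound $\|u\|_{L^1}\le e^{\|u\|_{L^1}}$ and the last is exactly the hypothesis. Therefore $a(z)\ne 0$ on all of $\overline{\mathbb{C}^+}=\mathbb{C}^+\cup\mathbb{R}$, excluding eigenvalues in $\mathbb{C}^+$ and resonances on $\mathbb{R}$. Repeating the argument verbatim with $d(z)-1=\sigma\int_{\mathbb{R}}u(-y)m_{12}^+(y;z)\,dy$ from (\ref{2.78}) and the analogous Volterra bound for $m_2^+$ on $\overline{\mathbb{C}^-}$ produces $|d(z)-1|<1$ there, so $d$ has no zeros in $\mathbb{C}^-\cup\mathbb{R}$ either.

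The only delicate point is that the bound on $m_{21}^+$ must be genuinely uniform over the \emph{closed} upper half-plane rather than merely on the real line; this hinges on the decay $|e^{2iz(y-x)}|\le 1$ in the Volterra kernel for $z\in\overline{\mathbb{C}^+}$ and $y\ge x$, combined with the fact that the bound of Lemma \ref{lemma1} holds for every fixed $z\in\overline{\mathbb{C}^+}$. One must also track the constants carefully so that the resulting smallness threshold matches the stated form $\|u\|_{L^1}e^{2\|u\|_{L^1}}<1$: the factor $e^{2\|u\|_{L^1}}$ arises precisely from estimating one power of $\|u\|_{L^1}$ by $e^{\|u\|_{L^1}}$ in the chain above, so the condition in the hypothesis is the natural one that forces $|a(z)-1|<1$.
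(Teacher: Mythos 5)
Your proposal is correct and follows essentially the same route as the paper's proof: the representation $a(z)-1=-\int_{\mathbb{R}}u(y)\,m_{21}^{+}(y;z)\,dy$, the Neumann-series bound $\|(I-K_{u})^{-1}\|\le e^{\|u\|_{L^{1}}}$ applied to $m_{1}^{+}-e_{1}=(I-K_{u})^{-1}K_{u}e_{1}$, and the resulting quantitative smallness $|a(z)-1|<1$ uniformly on $\overline{\mathbb{C}^{+}}$, with the mirrored computation for $d(z)$ on $\overline{\mathbb{C}^{-}}$. One small caveat: your parenthetical alternative of getting the $d$-statement from the symmetry $d(z)=\overline{d(-\bar z)}$ alone would not work, since that is a self-symmetry of $d$ rather than a relation between $d$ and $a$; your primary route via $d(z)-1=\sigma\int_{\mathbb{R}}u(-y)m_{12}^{+}(y;z)\,dy$ and the Volterra bound for $m_{2}^{+}$ is the right one.
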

\begin{proof}
The small-norm condition implies that $\| u\|_{L^1(\mathbb{R})}<1$.
Recall that $m_{1}^{+}=e_1+K_{u}m_{1}^{+}$ in Lemma \ref{lemma1}  and the operator $I-K_{u}$ is invertible and bounded from $ L^{\infty}_{x} $ to $ L^{\infty}_{x} $ and we reach that for every $z\in \mathbb{C}^+$,
$$
\|m_{1}^{+}(\cdot;z)-e_1\|_{L^{\infty}}=\|(I-K_{u})^{-1}\|\| K_{u}e_1(\cdot;z)\|_{L^{\infty}}\leq \| u\|_{L^1 } e^{2\| u\|_{L^1}}.
$$
 We derive for every $z\in \mathbb{C}^+$,
$$
|a(z)|\geq 1-\left|       \int_{\mathbb{R}} u(y)m_{21}^{+}(y;z)dy      \right|> 1- \| u\|_{L^1} e^{2\| u\|_{L^1}}.\nonumber
$$
Due to the continuity of $a(z)$, we also obtain that
$$
|a(z)|\geq  1-  \| u\|_{L^1(\mathbb{R})} e^{2\| u\|_{L^1 }}>0, \ \ z\in \mathbb{R},
$$
then  $a(z)$ admits no zeros in  $\mathbb{C}^+\cup \mathbb{R}$.
Carrying out a similar manipulation for $d(z)$, we see that $d(z)$ admits no zeros in  $\mathbb{C}^-\cup \mathbb{R}$.
\end{proof}
\begin{lemma}
If $u\in L^{2,1}(\mathbb{R})$  with $L^1(\mathbb{R})$ small-norm such that
\begin{align}
 1-\| u\|_{L^1(\mathbb{R})}(1+2 e^{2\| u\|_{L^1(\mathbb{R})} }) >0, \label{cond2}
 \end{align}
then for every $z\in\mathbb{R}$, we have $|r_{1,2}(z)|<1$.
\end{lemma}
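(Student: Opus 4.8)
The plan is to reduce both inequalities to a single elementary scalar estimate comparing the off-diagonal datum $b$ against the diagonal entries $a,d$, and then to match that estimate against \eqref{cond2}. Write $N:=\|u\|_{L^1(\mathbb{R})}$ and $E:=e^{2N}$. Observe first that \eqref{cond2} forces $2NE<1$, hence $NE<\tfrac12$; in particular the hypothesis $NE<1$ of the preceding lemma is met, so $a(z)$ and $d(z)$ are zero-free on $\mathbb{R}$ and $r_{1,2}$ are well defined there. Since $r_1=b/a$ and, for $z\in\mathbb{R}$, $r_2(z)=c(z)/d(z)=-\sigma\overline{b(-z)}/d(z)$ with $|\sigma|=1$, the two claims are equivalent to $|b(z)|<|a(z)|$ and $|b(-z)|<|d(z)|$ for every $z\in\mathbb{R}$. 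Thus it suffices to produce a uniform upper bound for $|b|$ together with uniform lower bounds for $|a|$ and $|d|$ on $\mathbb{R}$.

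First I would bound $|b|$ from the representation \eqref{2.79}, namely $b(z)=\sigma\int_{\mathbb{R}}e^{2izy}u(-y)m_{11}^{+}(y;z)\,dy$. Splitting $m_{11}^{+}=1+(m_{11}^{+}-1)$ and using $|e^{2izy}|=1$ for real $z$ gives
\[
|b(z)|\le\int_{\mathbb{R}}|u(-y)|\bigl(1+|m_{11}^{+}(y;z)-1|\bigr)\,dy\le N\bigl(1+\|m_{1}^{+}-e_1\|_{L^\infty}\bigr).
\]
The estimate $\|m_{1}^{+}(\cdot;z)-e_1\|_{L^\infty}\le Ne^{2N}=NE$, already obtained in the preceding lemma through the Neumann series for $(I-K_u)^{-1}$, then yields the uniform bound $|b(z)|\le N(1+NE)=N+N^2E$ for all $z\in\mathbb{R}$, and the same bound holds for $|b(-z)|$.

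For the lower bounds I would invoke the preceding lemma directly: it gives $|a(z)|\ge 1-NE>0$ on $\mathbb{C}^+\cup\mathbb{R}$ and, by the symmetric manipulation applied to the second identity in \eqref{2.78}, $|d(z)|\ge 1-NE>0$ on $\mathbb{C}^-\cup\mathbb{R}$. Combining the three bounds, it remains only to verify the scalar inequality $N+N^2E<1-NE$, equivalently $N\bigl(1+E(N+1)\bigr)<1$. Since \eqref{cond2} forces $N<1$ (because $1+2E>1$), we have $E(N+1)<2E$, whence $N\bigl(1+E(N+1)\bigr)<N(1+2E)<1$. This closes both estimates simultaneously and gives $|r_{1,2}(z)|<1$ for every $z\in\mathbb{R}$.

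The bound on $b$ and the lower bounds on $a,d$ are routine consequences of the earlier lemmas, so the only load-bearing step is the algebraic reduction in the last paragraph, where one must extract $N<1$ from \eqref{cond2} to dominate the quadratic contribution $N^2E+NE=NE(N+1)$ by $2NE$. I anticipate no genuine obstacle beyond bookkeeping of constants consistent with the exponent $e^{2N}$ in \eqref{cond2}; in particular one should take care that the lower bound for $d$ is recorded with the same constant $1-NE$ as for $a$, which holds verbatim by the symmetry of the construction.
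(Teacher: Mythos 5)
Your proposal is correct and follows essentially the same route as the paper: bound $|b(z)|$ from above via the representation \eqref{2.79} together with the Neumann-series estimate $\|m_{1}^{+}-e_{1}\|_{L^{\infty}}\leq \|u\|_{L^{1}}e^{2\|u\|_{L^{1}}}$, bound $|a(z)|$ and $|d(z)|$ from below by $1-\|u\|_{L^{1}}e^{2\|u\|_{L^{1}}}$ using the preceding lemma, and close with the scalar comparison supplied by \eqref{cond2}. The only difference is bookkeeping: you keep the sharper bound $|b|\leq N+N^{2}E$ and therefore need the extra (correct) observation that \eqref{cond2} forces $N<1$, whereas the paper uses the cruder bound $|b|\leq N+NE$, against which \eqref{cond2} applies verbatim.
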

\begin{proof}
Rewrite  (\ref{2.79}) for $b(z)$ as
\begin{equation}
b(z) =\sigma\int_{\mathbb{R}}e^{2izy}u(-y)m_{11}^{+}dy,
\end{equation}
Under the condition (\ref{cond2}).For every $z\in\mathbb{R}$,  we obtain
\begin{align}
&|b(z)|\leq \|m_{11}^{+}(\cdot;z)-1 \|_{L^{\infty}}\| u\|_{L^1}+\|u \|_{L^1}\leq \| u\|_{L^1(\mathbb{R})}  +  \| u\|_{L^1(\mathbb{R})}e^{2\| u\|_{L^1(\mathbb{R})} }\nonumber\\
&<  1-\| u\|_{L^1(\mathbb{R})}  e^{2\| u\|_{L^1(\mathbb{R})} } \leq  a(z),\nonumber
\end{align}
which yields
$$|r_1(z)|=\frac{|b(z)|}{|a(z)|} <1.$$
Similarly, we get $|r_2(z)|< 1$.
\end{proof}

\section{Inverse scattering transform} \label{sec3}

In this section, we will set up a RH problem
 and show  the existence and uniqueness of its  solution    for the given data $r(z)\in H^{1,1}(\mathbb{R})\cap L^{2,3}(\mathbb{R})$ .

\subsection{Set-up of RH problem}

Define
$$
M(x, z)=\begin{cases}
\begin{pmatrix}
m_{1}^{(+)}&m_{2}^{(-)}
\end{pmatrix}\begin{pmatrix}
a^{-1} & 0 \\
0 & 1
\end{pmatrix}& \operatorname{Im} z>0\\
\begin{pmatrix}
m_{1}^{(-)}& m_{2}^{(+)}
\end{pmatrix}
\begin{pmatrix}
1 & 0 \\
0 & d^{-1}
\end{pmatrix},&\operatorname{Im} z<0.
\end{cases}
$$
 then  $M (x, z) $ satisfy the following    RH-problem

\begin{problem} \label{ope}
 Find a   matrix function  $M(x ; z)$ satisfying

(i)  $M(x ; z) \rightarrow I+\mathcal{O}\left(z^{-1}\right)$ as $z \rightarrow \infty$.

(ii)   For   $  M(x ; z)$ admits the following jump condition
\begin{equation}
\begin{aligned}
M_{+}(x; z) &   =M_{-}(x; z) V_{x}(z), \label{prrwe}
\end{aligned}
\end{equation}
where
\begin{equation}
V_{x}(z):=\left(\begin{array}{cc}
1+\sigma r_{1}r_{2} & \sigma r_{2}(z) e^{2 \mathrm{i} x z}\\
r_{1}(z)e^{-2 \mathrm{i} x z} & 1
\end{array}\right),z\in \mathbb{R}.
\end{equation}
The reconstruction formula is given by
\begin{equation}
u(x):=2 \mathrm{i} \lim _{z \rightarrow \infty} z M_{12}(x, z),
u(-x):=2 \mathrm{i}\sigma \lim _{z \rightarrow \infty} z M_{21}(x, z).
\label{3.9}\end{equation}
\label{RHP1}\end{problem}

  We write (\ref{prrwe}) in the form
$$
    M_+(x;z)-M_-(x;z)=M_-(x;z)S(x;z),\quad z\in \mathbb{R},
$$
where
$$
S(x;z)=\left(\begin{array}{cc}
\sigma r_{1}r_{2} & \sigma r_{2}(z) e^{2 \mathrm{i} x z}\\
r_{1}(z)e^{-2 \mathrm{i} x z} & 0
\end{array}\right),
$$
Introduce a transformation
$$\Psi_{\pm}(x;z)=M_{\pm}(x;z)-{I},$$
then we obtain a new RH problem for $\Psi(x;z)$
\begin{align}
    &\Psi_+(x;z)-\Psi_-(x;z)=\Psi_-(x;z)S(x;z)+S(x;z),\quad z\in \mathbb{R},\nonumber\\
    &\Psi_{\pm}(x;z)\rightarrow 0,\quad |z|\rightarrow \infty,\ z\in \mathbb{C}\backslash \mathbb{R}\nonumber.
\end{align}
\subsection{Solvability   of  the RH problem }

We introduce  the Cauchy operator
$$
 \mathcal{C} ( f) (z)=\frac{1}{2 \pi \mathrm{i}} \int_{\mathbb{R}} \frac{f(\zeta)}{\zeta-z} d \zeta, \quad z \in \mathbb{C} \backslash \mathbb{R}
$$
and Plemelj projection operator
\begin{equation}
 \mathcal{P}^{\pm} (f)(x;z):=  \lim _{\varepsilon \searrow 0} \frac{1}{2 \pi \mathrm{i}} \int_{\mathbb{R}} \frac{f(\zeta)}{\zeta-(z \pm \mathrm{i} \varepsilon)} d \zeta, \quad z \in \mathbb{R},
\label{3.12}\end{equation}
where $f(z) \in L^{2}(\mathbb{R})$.
\begin{proposition}
(  \cite{Pelinovsky}) For every $f\in L^p(\mathbb{R})$ with $\ 1\leq p<\infty $, the Cauchy operator $\mathcal{C}(f)$ is analytic off the real line, decays to zero as $|z|\rightarrow\infty$, and approaches to $\mathcal P^{\pm}(f)$ almost everywhere when a point $z\in C^{\pm}$ approaches to a point on the real axis by any non-tangential contour from $\mathbb{C}^{\pm}$. If $1< p<\infty$, then there exists a positive constant $C_p$( $C_{p }=1, p=2$  ) such that
 \begin{equation}
    \|   \mathcal{P}^{\pm}(f)\|_{L^p}\leq C_p \| f\|_{L^p},
\label{3.13}\end{equation}
If $f\in L^1(\mathbb{R})$, then the Cauchy operator admits  the  asymptotic
\begin{equation}
    \lim_{|z|\rightarrow \infty} z\mathcal C(f)(z)=-\frac{1}{2\pi i}\int_\mathbb{R} f(s)d s.
\label{3.14}\end{equation}
\end{proposition}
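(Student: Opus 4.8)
The plan is to prove the four assertions in turn, each by reduction to a classical property of the Cauchy integral. \emph{Analyticity and decay.} Fix $z\in\mathbb{C}\setminus\mathbb{R}$ and note that the Cauchy kernel $\zeta\mapsto(\zeta-z)^{-1}$ lies in $L^{p'}(\mathbb{R})$, where $p'$ is the conjugate exponent of $p$; the hypothesis $p<\infty$ forces $p'>1$, which is exactly what makes the $|\zeta|^{-1}$ tail $p'$-integrable. Hölder's inequality then shows $\mathcal{C}(f)(z)$ is well defined, and differentiating under the integral sign (justified by dominated convergence of the difference quotients on compact subsets of $\mathbb{C}\setminus\mathbb{R}$, or by Morera's theorem) gives analyticity off $\mathbb{R}$. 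The scaling $\zeta=\operatorname{Re}z+|\operatorname{Im}z|\tau$ yields $\|(\cdot-z)^{-1}\|_{L^{p'}}=c_p|\operatorname{Im}z|^{-1/p}$, so Hölder together with a density approximation by compactly supported functions furnishes $\mathcal{C}(f)(z)\to0$ as $|z|\to\infty$ off the real line.

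\emph{Boundary values.} For $z\in\mathbb{R}$ I would split the kernel as
\begin{equation}
\frac{1}{\zeta-(z\pm i\varepsilon)}=\frac{\zeta-z}{(\zeta-z)^2+\varepsilon^2}\pm\frac{i\varepsilon}{(\zeta-z)^2+\varepsilon^2}.\nonumber
\end{equation}
The imaginary part produces $\pm\tfrac12(P_\varepsilon * f)(z)$, where $P_\varepsilon$ is the Poisson kernel; since $\{P_\varepsilon\}$ is an approximate identity, Fatou's theorem on non-tangential convergence of Poisson integrals gives the a.e.\ limit $\pm\tfrac12 f(z)$. The real part converges a.e.\ to $\tfrac{1}{2\pi i}\,\mathrm{p.v.}\!\int_{\mathbb{R}}(\zeta-z)^{-1}f(\zeta)\,d\zeta$, i.e.\ a constant multiple of the Hilbert transform of $f$, the a.e.\ convergence of the truncations being the classical theorem on the maximal Hilbert transform. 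This is precisely the Sokhotski--Plemelj formula $\mathcal{P}^{\pm}=\pm\tfrac12 I+\tfrac{1}{2\pi i}\,\mathrm{p.v.}\!\int(\cdot)$, and it delivers the claimed non-tangential a.e.\ boundary values.

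\emph{$L^p$ bounds.} Because $\mathcal{P}^{\pm}$ is, up to the bounded operator $\pm\tfrac12 I$, a constant multiple of the Hilbert transform, its $L^p$-boundedness for $1<p<\infty$ is the M.\ Riesz theorem, giving \eqref{3.13} with some $C_p>0$. For $p=2$ I would pass to the Fourier side via Plancherel: there $\mathcal{P}^{+}$ and $\mathcal{P}^{-}$ act as multiplication by the indicators of the two half-lines, hence are the mutually orthogonal projections of $L^2(\mathbb{R})$ onto the Hardy spaces of the upper and lower half-planes; orthogonal projections have operator norm one, so $C_2=1$. The deep analytic input here is the M.\ Riesz theorem (equivalently Calder\'on--Zygmund theory), which, with the a.e.\ convergence of the truncated Hilbert transform, is the genuine obstacle; the remaining steps are soft.

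\emph{Asymptotics for $f\in L^1$.} Finally I would write
\begin{equation}
z\,\mathcal{C}(f)(z)+\frac{1}{2\pi i}\int_{\mathbb{R}}f(\zeta)\,d\zeta=\frac{1}{2\pi i}\int_{\mathbb{R}}\frac{\zeta}{\zeta-z}\,f(\zeta)\,d\zeta,\nonumber
\end{equation}
and observe that, for $z$ in any cone $|\operatorname{Im}z|\geq\delta|z|$ with $0<\delta\leq1$, one has the uniform bound $|\zeta/(\zeta-z)|\leq 2/\delta$ (splitting into $|\zeta|\leq2|z|$ and $|\zeta|>2|z|$) together with the pointwise limit $\zeta/(\zeta-z)\to0$ as $|z|\to\infty$. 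Dominated convergence, with dominating function $(2/\delta)|f|\in L^1$, then forces the right-hand side to zero, which is exactly \eqref{3.14}.
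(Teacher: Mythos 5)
The paper does not actually prove this proposition: it is quoted directly from the reference \cite{Pelinovsky} and used as a black box, so there is no internal proof to compare against, and your proposal must be judged on its own merits. On those merits it is correct, and it is precisely the classical argument underlying that citation: H\"older with the kernel norm $\|(\cdot-z)^{-1}\|_{L^{p'}}=c_p|\operatorname{Im}z|^{-1/p}$ plus Morera for analyticity; the Sokhotski--Plemelj splitting into Poisson and conjugate-Poisson kernels, with Fatou's theorem and the a.e.\ convergence of (smoothed) truncated Hilbert transforms for the boundary values; the M.\ Riesz theorem for \eqref{3.13}, with the Fourier-multiplier picture giving $C_2=1$; and the identity $z\mathcal{C}(f)(z)+\frac{1}{2\pi i}\int_{\mathbb{R}}f\,d\zeta=\frac{1}{2\pi i}\int_{\mathbb{R}}\frac{\zeta f(\zeta)}{\zeta-z}\,d\zeta$ plus dominated convergence in a cone for \eqref{3.14}.

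Two points should be made explicit. First, the unqualified claims ``decays to zero as $|z|\rightarrow\infty$'' and \eqref{3.14} are literally false for general $f$: taking $f=\sum_m\chi_{[2^m,\,2^m+m^{-2}]}\in L^1(\mathbb{R})\cap L^p(\mathbb{R})$ and $z_n=2^n+\tfrac12 n^{-2}+in^{-3}$, one checks $\mathcal{C}(f)(z_n)\rightarrow\tfrac12$ while $|z_n|\rightarrow\infty$ (near the real line $\mathcal{C}(f)$ tracks $\mathcal{P}^{\pm}f$, which need not vanish). Your density argument actually proves decay uniformly on each half-plane $\{|\operatorname{Im}z|\geq\delta\}$, and your last step proves \eqref{3.14} in each cone $|\operatorname{Im}z|\geq\delta|z|$; these restricted versions are what is true and are all the paper ever uses (the reconstruction limits in Section 4 are taken in such directions), so you should state the restriction in the decay step as explicitly as you did in the asymptotics step. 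Second, two minor repairs: on the Fourier side $\mathcal{P}^-$ is \emph{minus} the projection onto the complementary half-line, so it is not itself an orthogonal projection, though its operator norm is still $1$ and $C_2=1$ stands; and for genuinely non-tangential (rather than vertical) convergence of the conjugate-Poisson part one should invoke the classical non-tangential maximal estimate for the conjugate Poisson integral, not only the maximal truncated Hilbert transform. Neither point affects the substance of your argument.
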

\begin{lemma}
 Let $r_{1,2}(z)\in H^1(\mathbb{R})$ , satisfy $|r_{1,2}(z)|\leq1$ then there exist positive constants $c_-$ and $c_+$ such that for every $x\in \mathbb{R}$ and every column-vector $g\in \mathbb{C}^2$, we have
\begin{equation}
    {\rm Re}\ g^*( I+S(x;z))g\geq c_-g^*g,\quad z\in \mathbb{R}
\label{3.15}\end{equation}
and
\begin{equation}
    \|(I+S(x;z))g\|\leq c_+\|g \|,\quad z\in \mathbb{R},
\label{3.16}\end{equation}
where the asterisk denotes the Hermite conjugate.
\end{lemma}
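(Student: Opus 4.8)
The plan is to reduce both inequalities to elementary linear algebra for the fixed $2\times 2$ matrix $A:=I+S(x;z)=V_x(z)$, exploiting that the phase factors $e^{\pm 2\mathrm{i}xz}$ are unimodular and hence drop out of every modulus that has to be controlled; in particular all constants will come out independent of $x$.

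For the upper bound \eqref{3.16} I would simply estimate the operator norm of $A$ by its Frobenius norm. Since $|r_{1,2}(z)|\le 1$ for $z\in\mathbb{R}$, the entries obey $|A_{11}|=|1+\sigma r_1 r_2|\le 1+|r_1||r_2|\le 2$, $|A_{12}|=|r_2|\le 1$, $|A_{21}|=|r_1|\le 1$ and $|A_{22}|=1$, so $\|A\|_{\mathrm{op}}\le\|A\|_F\le\sqrt{7}$ and \eqref{3.16} holds with $c_+=\sqrt 7$, uniformly in $x,z$. This part is routine.

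The heart of the lemma is the coercivity \eqref{3.15}. I would use $\mathrm{Re}\,g^*Ag=g^*Hg$, where $H:=\tfrac12(A+A^*)$ is the Hermitian part of $A$; since $g^*Hg$ is real it suffices to bound the smallest eigenvalue of $H$ from below. Writing $H$ out, the unimodular phases recombine so that $H_{11}=1+\sigma\,\mathrm{Re}(r_1 r_2)$, $H_{22}=1$ and $H_{12}=\overline{H_{21}}=\tfrac12(\sigma r_2+\bar r_1)e^{2\mathrm{i}xz}$, whence $|H_{12}|^2=\tfrac14\bigl(|r_1|^2+|r_2|^2+2\sigma\,\mathrm{Re}(r_1 r_2)\bigr)$ is independent of $x$. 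A direct computation then gives $\det H=1+\tfrac{\sigma}{2}\mathrm{Re}(r_1 r_2)-\tfrac14|r_1|^2-\tfrac14|r_2|^2\ge 1-\tfrac14(|r_1|+|r_2|)^2$ together with $H_{11}\ge 1-|r_1||r_2|$. Using the elementary bound $\lambda_{\min}(H)\ge \det H/\mathrm{tr}\,H$ for a positive-definite Hermitian $2\times 2$ matrix and $\mathrm{tr}\,H=2+\sigma\,\mathrm{Re}(r_1 r_2)\le 3$, one arrives at $\mathrm{Re}\,g^*Ag=g^*Hg\ge c_-\,g^*g$.

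The main obstacle is that under the bare hypothesis $|r_{1,2}|\le 1$ these estimates only yield $\det H\ge 0$ and $\lambda_{\min}\ge 0$; to produce a \emph{strictly} positive constant $c_-$ uniform in $x$ and $z$ I need the strict bound $\rho:=\sup_{z\in\mathbb{R}}|r_{1,2}(z)|<1$. This is exactly where the preceding lemma enters: it supplies $|r_{1,2}(z)|<1$ pointwise, and since $r_{1,2}\in H^1(\mathbb{R})\hookrightarrow C_0(\mathbb{R})$ are continuous and vanish at infinity, their moduli attain their suprema at finite points, forcing $\rho<1$. With $\rho<1$ in hand one gets $\det H\ge 1-\rho^2>0$ and $H_{11}\ge 1-\rho^2>0$, so that $c_-=(1-\rho^2)/3$ works. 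I would close by emphasizing that every bound depends on $x$ only through the unimodular factors $e^{\pm 2\mathrm{i}xz}$, so both $c_-$ and $c_+$ are uniform in $x\in\mathbb{R}$ as required.
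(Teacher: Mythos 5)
Your proposal is correct and takes essentially the same route as the paper: both pass to the Hermitian part $H=I+S_H$, establish its positive definiteness from the strict pointwise bound $|r_{1,2}(z)|<1$ (the paper, like you, silently upgrades the stated hypothesis $|r_{1,2}|\le 1$ via the preceding small-norm lemma), and extract a uniform $c_-$ from the continuity of $r_{1,2}$ and its decay at infinity. The only differences are cosmetic: the paper computes the eigenvalues $\mu_\pm$ of $I+S_H$ explicitly and bounds $\inf_{z}\mu_-$ directly, where you use $\lambda_{\min}(H)\ge \det H/\mathrm{tr}\,H$ together with $\rho=\sup_{z}|r_{1,2}(z)|<1$, and it proves the upper bound by a componentwise estimate rather than the Frobenius norm.
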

\begin{proof}
The original scattering matrix $S(x;z)$ is not Hermitian due to the fact that  there is no relationship between $a(z)$ and $d(z)$.
Therefore, we define Hermitian part of $S(x;z)$ by
\begin{equation}
\begin{split}
    S_H(x;z)&=\frac 12( S(x;z)+  S^*(x;z))\\
    &=\left( \begin{array}{cc}
    \sigma \rm Re(r_1r_2) & \frac12(\bar r_1+\sigma r_2)e^{2izx}\\
    \frac 12(r_1+\sigma \bar{r}_2)e^{-2izx} & 0
    \end{array}\right).
\end{split}
\end{equation}
Since $|r_{1,2}(z)|<1$, the  2-order principle minor of the matrix $I+S_H$
$$1+\sigma{\rm Re}(r_1r_2)-\frac14|r_1+\sigma\bar r_2|^2=1-\frac14|r_1-\sigma\bar r_2|^2>0,$$
 which indicates that the 1-order  principle minor $1+\sigma\rm{Re}(r_1r_2)>0$. Thus the matrix $I+S_H$ is positive definite.

In view of the algebra theory, for a Hermitian matrix, there exists an unitary matrix $A$  such that
\begin{equation}
A^*(I+S_H)A={\rm diag}(
\mu_+, \mu_- ),\label{unitary}
\end{equation}
where $\mu_{\pm}$ are the eigenvalues of the matrix $I+S_H$
\begin{equation}
    \mu_{\pm}(z)=\frac{2+\sigma \rm{Re}(r_1r_2)\pm\sqrt{\rm{Re}^2(r_1r_2)+|r_1+\sigma \bar{r}_2|^2}}{2}.
    \nonumber
\end{equation}
    Note $\mu_+(z)>\mu_-(z)>0$ as $I+S_H$ is positive definite. And it follows from $r_{1,2}(z)\rightarrow 0$  that $\mu_-(z)\rightarrow 1$ as $|z|\rightarrow\infty,\ z\in\mathbb{R}$. Together with  $r_{1,2}(k)\in H^1(\mathbb{R})$, there exists a positive constant $c_-$ such that $\mu_->c_-$.

Consequently, for every $g\in \mathbb{C}^2$, utilizing (\ref{unitary}),
  we have
\begin{equation*}
    c_-g^*g<\mu_-g^*g\leq{\rm Re}  g^*( I+S(x;z))g=g^*(I+S_H)g,
\end{equation*}
this completes the proof of the bound (\ref{3.15}).

Calculating $(I+S(x;z))g$ componentwise and utilizing $|r_{1,2}(z)|<1$ gives that
\begin{equation}
    \begin{split}
       \|(I+S(x;z))g \|^2\leq & 2(1+|r_1 |+|r_2 |)^2\| g \|^2\\
       &+2 {\rm Re}\{ ((\sigma+r_1r_2)\bar r_2+r_1)e^{-2izx}g^{(1)}\overline{g^{(2)}}\}\\
       \leq&\left((|r_1 |+1)^2+(|r_2 |+1)^2+(|r_1 |+|r_2 |)^2\right)\| g \|^2,
    \end{split}
\end{equation}
here the norm for a 2-component vector $f$ is $\|f\|^2=| f^{(1)}|^2+| f^{(2)}|^2$.
Therefore, we take
$$c_+=\sup_{z\in\mathbb{R}}\sqrt{(|r_1 |+1)^2+(|r_2 |+1)^2+(|r_1 |+|r_2 |)^2}<+\infty,$$
then one obtain the bound (\ref{3.16}).
\end{proof}

\begin{lemma}
Let $r_{1,2}(z)\in H^1(\mathbb{R})$ ,satisfy $|r_{1,2}(z)|\leq1$ then for every $F(z)\in L^2_z(\mathbb{R})$, there exists a unique solution $\Psi(z)\in L_z^2(\mathbb{R})$ of the equation
\begin{equation}
(  I-P_S^-)\Psi(z)=F(z),\ z\in \mathbb{R}, \label{eie}
\end{equation}
where $P_S^-\Psi =P^-(\Psi S)$.
\end{lemma}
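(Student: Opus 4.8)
The plan is to realize $I-P_S^-$ as a bounded, boundedly invertible operator on $L^2_z(\mathbb{R})$ by combining the pointwise positivity and boundedness bounds (\ref{3.15})--(\ref{3.16}) with the $L^2$ mapping properties of the Cauchy and Plemelj operators. Since the jump acts by right multiplication, the equation decouples across the rows of $\Psi$, so I treat $\Psi,F$ as $\mathbb{C}^{1\times 2}$-valued $L^2$ functions. First I would record boundedness: because $r_{1,2}\in H^1(\mathbb{R})\hookrightarrow L^\infty(\mathbb{R})$ and $|r_{1,2}|\le 1$, the matrix $S(x;\cdot)$ lies in $L^\infty$, and since $\mathcal P^-$ is bounded on $L^2$ with constant $C_2=1$ by (\ref{3.13}), the map $\Psi\mapsto P^-(\Psi S)$ is bounded; hence $I-P_S^-$ is a bounded operator.

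The core is an energy estimate built on Hardy-space orthogonality. For $\Psi\in L^2$ set $\Phi:=\mathcal C(\Psi S)$; since $\Psi S\in L^2$, the function $\Phi$ is analytic off $\mathbb{R}$, decays at infinity, and has boundary values $\Phi_\pm=\mathcal P^\pm(\Psi S)$ with $\Phi_+-\Phi_-=\Psi S$. Consequently each component of $\Phi_+$ lies in the Hardy space $H^2_+$ and each component of $\Phi_-$ in $H^2_-$, so disjointness of their Fourier supports yields $\int_\mathbb{R}\Phi_+\Phi_-^*\,dz=0$. Writing $G:=(I-P_S^-)\Psi=\Psi-\Phi_-$ I get $\Phi_-=\Psi-G$ and $\Phi_+=\Psi(I+S)-G$; substituting into the orthogonality relation and taking real parts gives
\begin{equation}
\mathrm{Re}\int_\mathbb{R}\Psi(I+S)\Psi^*\,dz=\mathrm{Re}\int_\mathbb{R}\big[\Psi(I+S)G^*+G\Psi^*-GG^*\big]\,dz.\nonumber
\end{equation}
Applying the pointwise lower bound (\ref{3.15}) to the left side (with $g=\Psi^*$) and the pointwise upper bound (\ref{3.16}) together with Cauchy--Schwarz to the right side, I obtain the coercivity estimate $c_-\|\Psi\|_{L^2}^2\le (c_++1)\|\Psi\|_{L^2}\|G\|_{L^2}$, that is,
\begin{equation}
\|\Psi\|_{L^2}\le \frac{c_++1}{c_-}\,\|(I-P_S^-)\Psi\|_{L^2}.\nonumber
\end{equation}
Since $c_->0$, this immediately gives injectivity and shows that $I-P_S^-$ has closed range, which settles uniqueness.

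For existence I would show the cokernel is trivial, i.e. $\ker\big((I-P_S^-)^*\big)=\{0\}$. A short computation using the self-adjointness of $\mathcal P^-$ on $L^2(\mathbb{R})$ identifies the adjoint equation as $\Phi=(P^-\Phi)S^*$. Setting $\Theta:=\mathcal C(\Phi)$ and arguing exactly as above gives $\Theta_+=\Theta_-(I+S^*)$ with $\int_\mathbb{R}\Theta_+\Theta_-^*\,dz=0$; since $\mathrm{Re}[g^*(I+S^*)g]=\mathrm{Re}[g^*(I+S)g]\ge c_-\,g^*g$, the same positivity forces $\Theta_-=0$ and hence $\Phi=0$. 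Therefore the range of $I-P_S^-$ is both closed and dense, so it equals $L^2_z(\mathbb{R})$, and $I-P_S^-$ is a bijection; the unique solution is $\Psi=(I-P_S^-)^{-1}F$.

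The main obstacle is the existence (surjectivity) half rather than uniqueness: the non-Hermitian structure of $S$ means $P_S^-$ is neither self-adjoint nor compact, so I cannot invoke a naive Fredholm alternative or a contraction argument. The decisive points are, first, that the Hermitian part of $I+S$ (and of $I+S^*$) is uniformly positive definite thanks to $|r_{1,2}|\le 1$ and the spectral computation behind (\ref{3.15}), and second, that the flat-contour Plemelj projections are genuine orthogonal Hardy projections, so that the cross term $\int_\mathbb{R}\Phi_+\Phi_-^*\,dz$ vanishes \emph{exactly}. Care is also needed to ensure $\Psi S\in L^2$ so that the Cauchy integral and its boundary values are well defined and the orthogonality relation is legitimate.
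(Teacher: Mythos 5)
Your proof is correct, but it follows a genuinely different skeleton from the paper's. The paper argues via the Fredholm alternative: it asserts---without justification---that $I-P_S^-$ is a Fredholm operator of index zero, which reduces solvability of (\ref{eie}) to uniqueness for the homogeneous equation $(I-P_S^-)g=0$; that uniqueness is then proved by integrating $\mathcal{C}(gS)\,\mathcal{C}(gS)^*$ over a large semicircle, letting the radius tend to infinity to obtain $\int_{\mathbb{R}}\mathcal{P}^+(gS)[\mathcal{P}^-(gS)]^*\,dz=0$, and invoking the positivity bound (\ref{3.15}). You never use Fredholm theory: your coercivity estimate $c_-\|\Psi\|_{L^2}\le (c_++1)\|(I-P_S^-)\Psi\|_{L^2}$ gives injectivity and closed range, and you obtain surjectivity by showing that the adjoint equation $\Phi=(\mathcal{P}^-\Phi)S^*$ has only the zero solution, using that $\mathcal{P}^+$ and $-\mathcal{P}^-$ are the orthogonal Hardy projections. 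The core mechanism is identical in both proofs---orthogonality of the $\pm$ boundary values of a Cauchy integral combined with the uniform positive definiteness of the Hermitian part of $I+S$---but you derive the orthogonality from disjointness of Fourier supports (needing only $\Psi S\in L^2$), whereas the paper derives it from Cauchy's theorem on a semicircular contour (which additionally needs $gS\in L^1$ and decay on the arc). As for what each route buys: the paper's is shorter if one grants the index-zero Fredholm claim, but that claim is precisely the unproven step of its proof, and your argument fills this gap with elementary functional analysis (bounded below $\Rightarrow$ closed range, trivial cokernel $\Rightarrow$ dense range); moreover, your explicit a priori bound already contains the content of the paper's subsequent lemma, namely the resolvent estimate $\|(I-P_S^-)^{-1}f\|_{L^2}\le c\|f\|_{L^2}$, which the paper establishes separately by a second energy argument of the same type.
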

\begin{proof}
Since $  I-P_S^-$ is a Fredholm operator of the index zero, by Fredholm's alternative theorem, there exists a unique solution
of the equation (\ref{eie})  if and only if   the homogeneous  equation
\begin{equation}
(  I-P_S^-)g=0,\ z\in \mathbb{R}, \label{eie2}
\end{equation}
admits zero solution  in $L^2_z(\mathbb{R})$.

Assume that $g(z)\in L^2_z(\mathbb{R})$ and $g(z)\neq 0$  is a solution of equation (\ref{eie2}).
Define two analytic functions in $\mathbb C\backslash  \mathbb{R}$
$$
    g_1(z)=\mathcal C(gS)(z), \quad g_2(z)=\mathcal C(gS)^*(z).
$$
 The functions $g_1(z)$ and $g_2(z)$ are well-defined due to $S(z)\in L_z^2(\mathbb R)\cap L_z^{\infty}(\mathbb R)$.

 We integrate the function $g_1(z)g_2(z)$ along the semi-circle of radius R centered at zero in $\mathbb C^+$. It follows from Cauchy theorem that
$$
    \oint g_1(z)g_2(z)dz=0.
$$
Since $g(z)S(z)\in L_z^1(\mathbb R)$,  using  yields $g_{1}(z), g_{2}(z)=\mathcal O(z^{-1}),\ |z|\rightarrow \infty$. Hence, the integral on the arc approaches to zero as the radius approaches to infinity. Therefore, we obtain
\begin{equation}
    \begin{split}
        0&=\int_{\mathbb{R}} g_1(z)g_2(z) dz =\int_{R} \mathcal{P}^+(gS)[\mathcal{P}^-(gS)]^*\rm dz\\
        &=\int_{\mathbb{R}}[P^-(gS)+gS][\mathcal{P}^-(gS)]^*\rm dz.
    \end{split}
\end{equation}
Utilizing the assumption $\mathcal{P}^-(gS)=g$, we have
$$
\int_{R} g(I+S)g^*dz=0.
$$
 We get ${\rm Re}\ g(I+S)g^*>c_-g^*g$ with $c_-$ is a positive constant. Thus the function $g(z)$ has to be  zero. This contradicts to the assumption $g\neq 0$. Therefore, $g=0$ is a unique solution to the equation $(I-P^-_S)g=0$ in $L^2_z(\mathbb{R})$. Finally there exists a unique solution to the equation $(I-P_S^-)\Psi(z)=F(z)$.
\end{proof}

\begin{lemma}
 Let $r_{1,2}(z)\in H^1(\mathbb{R})$ satisfy $|r_{1,2}(z)|\leq1$ then for every $x\in \mathbb{R}$,
there exist  unique solutions $\Psi_{\pm}(x;z)\in L^2_z(\mathbb{R})$ satisfying the equation
$$
\Psi_+(x;z)-\Psi_-(x;z)=\Psi_-S(x;z)+S(x;z),\quad z\in \mathbb{R}.
$$

\label{lemma3.4}\end{lemma}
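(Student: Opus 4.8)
The plan is to convert the additive (jump) Riemann--Hilbert problem into a singular integral equation for the minus boundary value $\Psi_-$, and then to invoke the unique solvability of $I-P_S^-$ already established. First I would represent a candidate solution as a Cauchy integral,
\[
\Psi(x;z)=\mathcal{C}\big((\Psi_-+I)S(x;\cdot)\big)(z),\quad z\in\mathbb{C}\backslash\mathbb{R},
\]
which is analytic off the real axis and decays as $|z|\to\infty$; by the Plemelj formula its non-tangential boundary values are $\Psi_\pm=\mathcal{P}^\pm\big((\Psi_-+I)S\big)$. Subtracting these reproduces the required jump $\Psi_+-\Psi_-=(\Psi_-+I)S=\Psi_-S+S$, so it suffices to produce a $\Psi_-\in L^2_z(\mathbb{R})$ compatible with this representation.

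Taking the minus boundary value and using $\mathcal{P}^-(\Psi_-S)=P_S^-\Psi_-$ gives the closed equation
\[
(I-P_S^-)\Psi_-=\mathcal{P}^-(S),\quad z\in\mathbb{R}.
\]
To apply the preceding lemma on \eqref{eie} I must check that the forcing term lies in $L^2_z(\mathbb{R})$. Since $r_{1,2}\in H^1(\mathbb{R})\hookrightarrow L^2(\mathbb{R})\cap L^\infty(\mathbb{R})$, each entry of $S(x;z)$, namely $\sigma r_1r_2$, $\sigma r_2e^{2izx}$ and $r_1e^{-2izx}$, belongs to $L^2_z(\mathbb{R})$ with norm independent of $x$ (the exponentials have unit modulus for real $z$), so $S(x;\cdot)\in L^2_z(\mathbb{R})$; the $L^2$-boundedness of the projection (with $C_2=1$ in \eqref{3.13}) then yields $\mathcal{P}^-(S)\in L^2_z(\mathbb{R})$. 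Because $r_{1,2}$ also obey $|r_{1,2}|\leq1$, the hypotheses of that lemma are met, and for each fixed $x\in\mathbb{R}$ there is a unique $\Psi_-(x;\cdot)\in L^2_z(\mathbb{R})$ solving the displayed equation.

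Finally I would recover $\Psi_+$ from the jump relation, $\Psi_+=\Psi_-+(\Psi_-+I)S$, and verify $\Psi_+\in L^2_z(\mathbb{R})$: here $\Psi_-\in L^2_z$ and $S\in L^2_z\cap L^\infty_z$ give $\Psi_-S\in L^2_z$, while $S\in L^2_z$ closes the argument (equivalently $\Psi_+=\mathcal{P}^+\big((\Psi_-+I)S\big)\in L^2_z$ by \eqref{3.13}). Uniqueness transfers directly: if two pairs solve the jump problem, their difference satisfies the homogeneous jump relation, its minus part obeys $(I-P_S^-)(\Psi_--\tilde\Psi_-)=0$, and the injectivity shown in the preceding lemma (via \eqref{eie2}) forces $\Psi_-=\tilde\Psi_-$ and hence $\Psi_+=\tilde\Psi_+$. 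I expect the one delicate point to be the passage between the jump relation and the singular integral equation, i.e.\ justifying that every $L^2$ solution of the jump problem is given by the Cauchy representation, so that the two formulations are genuinely equivalent; this rests on the analyticity and decay built into the Riemann--Hilbert Problem \ref{RHP1} together with the mapping properties \eqref{3.13}--\eqref{3.14}.
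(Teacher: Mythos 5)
Your proposal is correct and follows essentially the same route as the paper: both reduce the jump relation to the singular integral equation $(I-\mathcal{P}_S^-)\Psi_-=\mathcal{P}^-(S)$, invoke the preceding lemma for its unique solvability in $L^2_z(\mathbb{R})$, and then recover $\Psi_+$ via $\mathcal{P}^+(\Psi_-S+S)$ with analytic extension by the Cauchy operator. Your treatment is in fact somewhat more explicit than the paper's on the two points it glosses over --- verifying $S(x;\cdot)\in L^2_z(\mathbb{R})$ uniformly in $x$, and the equivalence between the jump formulation and the integral equation needed to transfer uniqueness --- but the underlying argument is identical.
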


\begin{proof}
 Owing to $S(x;z)\in L^2_z(\mathbb{R})$, we have $\mathcal{P_S}^-(z)\in L^2_z(\mathbb{R})$ by (\ref{3.13}). Then  for every $x\in \mathbb{R}$,
 there exists a unique solution $\Psi_-(x;z)\in L^2_z(\mathbb{R})$  satisfying the   equation
 \begin{equation}
      \Psi_-(x;z)=\mathcal{P}^-(\Psi_-(x;z)S(x;z)+S(x;z)),\quad z\in \mathbb{R}.
 \label{3.23}\end{equation}
 Based on the existence of $\Psi_-(x;z)$, we define a function $\Psi_+(x;z)$ by
 \begin{equation}
     \Psi_+(x;z)=\mathcal{P}^+(\Psi_-(x;z)S(x;z)+S(x;z)),\quad z\in \mathbb{R}.
 \label{3.24}\end{equation}
  Besides, analytic extensions of $\Psi_{\pm}(x;z)$ to $z\in \mathbb{C}^{\pm}$ are defined by Cauchy operator
 \begin{equation}
     \Psi_{\pm}(x;z)=\mathcal C(\Psi_-(x;z)S(x;z)+S(x;z)),\quad z\in \mathbb{C}^{\pm}.
 \label{3.25}\end{equation}
 Finally we obtain the solution $\Psi_{\pm}(x;z)\in L^2_z(\mathbb{R})$. Moreover, given the property of the Cauchy operator and the Plemelj projection operator, the solutions $\Psi_{\pm}(x;z)$ are analytic functions for $z\in \mathbb{C}^{\pm}$.
\end{proof}
\begin{lemma}
 Let $r_{1,2} \in H^1(\mathbb{R})$ ,satisfy $|r_{1,2}(z)|\leq1$  then the operator $(I-\mathcal{P}_S^-)^{-1}$ is bounded from $L_z^2(\mathbb{R})$ to $L_z^2(\mathbb{R})$, and there exists a constant $c$ that only depends on $\| r(z)\|_{L_z^{\infty}}$ such that
$$
\|(I-P_S^-)^{-1} f \|_{L_z^2}\leq c\| f\|_{L_z^2}
$$
\end{lemma}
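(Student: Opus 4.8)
The existence and uniqueness of $(I-\mathcal P_S^-)^{-1}$ have already been secured by the preceding Fredholm argument, so the only new content is the \emph{quantitative} bound with a constant governed solely by $\|r\|_{L^\infty}$. The plan is to obtain this through an energy estimate, exactly in the spirit of the contour computation that ruled out a nontrivial kernel in the previous lemma, but now carrying the inhomogeneous term. Fix $x\in\mathbb R$, let $F\in L^2_z(\mathbb R)$ and set $\Psi=(I-\mathcal P_S^-)^{-1}F$, so that $\Psi=\mathcal P^-(\Psi S)+F$; hence $\mathcal P^-(\Psi S)=\Psi-F$ and consequently $\mathcal P^+(\Psi S)=\mathcal P^-(\Psi S)+\Psi S=\Psi(I+S)-F$.

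First I would record the integrability facts that make the contour argument legitimate: since $r_{1,2}\in H^1(\mathbb R)\hookrightarrow L^\infty\cap L^2$, the matrix $S(x;\cdot)\in L^2_z\cap L^\infty_z$, whence $\Psi S\in L^2_z\cap L^1_z$. Thus $\mathcal C(\Psi S)$ and its conjugate decay like $\mathcal O(z^{-1})$ at infinity and their boundary values agree with $\mathcal P^\pm(\Psi S)$. Repeating the semicircle computation on $g_1=\mathcal C(\Psi S)$ and $g_2=\mathcal C(\Psi S)^*$, which are analytic in $\mathbb C^+$ with an $\mathcal O(z^{-2})$ product, Cauchy's theorem yields
\[
\int_{\mathbb R}\mathcal P^+(\Psi S)\,[\mathcal P^-(\Psi S)]^*\,dz=0.
\]
Substituting the two identities above converts this into
\[
\int_{\mathbb R}[\Psi(I+S)-F]\,(\Psi-F)^*\,dz=0,
\]
i.e. $\int_{\mathbb R}\Psi(I+S)\Psi^*\,dz=\int_{\mathbb R}[\Psi(I+S)F^*+F\Psi^*-FF^*]\,dz$.

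Taking real parts, the left-hand side is controlled from below by the positivity bound (\ref{3.15}) (applied to $g=\Psi^*$): $\mathrm{Re}\int_{\mathbb R}\Psi(I+S)\Psi^*\,dz\ge c_-\|\Psi\|_{L^2}^2$. The three terms on the right are cross terms in $F$; estimating them by Cauchy--Schwarz together with the uniform bound (\ref{3.16}), namely $\|\Psi(I+S)\|_{L^2}\le c_+\|\Psi\|_{L^2}$, gives
\[
c_-\|\Psi\|_{L^2}^2\le (c_++1)\,\|\Psi\|_{L^2}\|F\|_{L^2}+\|F\|_{L^2}^2.
\]
Viewing this as a quadratic inequality in $\|\Psi\|_{L^2}$ and solving produces $\|\Psi\|_{L^2}\le c\,\|F\|_{L^2}$ with $c=\big[(c_++1)+\sqrt{(c_++1)^2+4c_-}\big]/(2c_-)$. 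Since $c_+$ is the explicit supremum displayed in (\ref{3.16}) and $c_-$ may be taken as the $\|r\|_{L^\infty}$-dependent lower bound for $\mu_-$ furnished by (\ref{3.15}) (indeed $\mu_-\ge(1-\|r\|_{L^\infty}^2)/(2+\|r\|_{L^\infty}^2)$, as one sees from $\mu_+\mu_-=1-\tfrac14|r_1-\sigma\bar r_2|^2$ and $\mu_+\le 2+\|r\|_{L^\infty}^2$), the constant $c$ depends only on $\|r\|_{L^\infty}$, as claimed.

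The step I expect to demand the most care is the contour argument for the inhomogeneous equation: one must verify that $g_1g_2$ is genuinely $\mathcal O(z^{-2})$ so the arc contribution vanishes, and keep the matrix/vector Hermitian-conjugation bookkeeping consistent so that the emerging quadratic form is precisely $\Psi(I+S)\Psi^*$, to which (\ref{3.15}) applies, and so that $\|\Psi(I+S)\|_{L^2}\le c_+\|\Psi\|_{L^2}$ follows from (\ref{3.16}) via the norm invariance $\|(I+S)^T\|=\|I+S\|$. Once the key identity is in hand, the remaining steps are routine applications of (\ref{3.15})--(\ref{3.16}) and the quadratic-inequality trick.
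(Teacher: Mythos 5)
Your proof is correct, and it takes a genuinely different route from the paper's. The paper first splits the data into Hardy components, $f=\mathcal P^+f-\mathcal P^-f$, solves $(I-\mathcal P_S^-)\Psi_\pm=\mathcal P^\pm f$ for each piece (uniqueness gives $\Psi=\Psi_+-\Psi_-$), and runs \emph{two} contour computations, in each case choosing the second analytic function as $\mathcal C(\Psi_\pm S+f)^*$ so that, by the defining equation, its boundary value collapses to $\Psi_-^*$ (resp.\ $[\Psi_+(I+S)]^*$); each piece then satisfies a \emph{linear} estimate, namely (\ref{f1}) and (\ref{f2}), and the triangle inequality finishes. You instead keep $\Psi$ whole, use the single pair $g_1=\mathcal C(\Psi S)$, $g_2=\mathcal C(\Psi S)^*$ --- both Cauchy transforms of the $L^1$ function $\Psi S$, so your $\mathcal O(z^{-2})$ decay of the product is actually cleaner than in the paper, where $g_2$ involves $f\in L^2$ and only $o(1)$ decay is available --- and then insert the identities $\mathcal P^-(\Psi S)=\Psi-F$, $\mathcal P^+(\Psi S)=\Psi(I+S)-F$ to land on a quadratic inequality, which closes. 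Two remarks: first, the term $-\int FF^*\,dz$ that you estimate by $+\|F\|_{L^2}^2$ has \emph{negative} real part, so it can simply be dropped, giving $c_-\|\Psi\|_{L^2}^2\le(c_++1)\|\Psi\|_{L^2}\|F\|_{L^2}$ and hence $c=(c_++1)/c_-$ with no quadratic-formula step; second, your explicit bound $\mu_-\ge(1-\|r\|_{L^\infty}^2)/(2+\|r\|_{L^\infty}^2)$ is a genuine sharpening, since the paper's compactness argument ($\mu_-$ continuous and $\to1$ at infinity) yields a $c_-$ depending on the whole profile of $r_{1,2}$ rather than on $\|r\|_{L^\infty}$ alone, so your version is what actually justifies the lemma's claim about the dependence of the constant (when $\|r\|_{L^\infty}<1$; at $\|r\|_{L^\infty}=1$ neither argument produces such a constant). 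What the paper's splitting buys is the pair of refined bounds on $(I-\mathcal P_S^-)^{-1}\mathcal P^\pm$ separately; what yours buys is a single contour computation and simpler decay bookkeeping.
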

\begin{proof}
For every $f(z)\in L_z^2( \mathbb{R})$  that there exists a solution $\Psi(z)\in L_z^2( \mathbb{R})$ to $(I-\mathcal{P}_S^-)\Psi(z)=f(z)$. Note that $\mathcal{P}^+-\mathcal{P}^{-}=I$, we decompose the function into $\Psi=\Psi_+-\Psi_-$ with
\begin{equation}
\Psi_--\mathcal{P}^-(\Psi_-S)=\mathcal{P}^-(f),\quad \Psi_+-\mathcal{P}^-(\Psi_+S)=\mathcal{P}^+(f).
\label{3.27}\end{equation}
Since $\mathcal{P}^{\pm}(f)\in L_z^2(\mathbb{R})$,   there exist unique solutions $\Psi_{\pm}(z)\in L_z^2(\mathbb{R})$ for (\ref{3.23}) which implies the decomposition is unique.
Therefore, we only need the estimates of $\Psi_{\pm}$ in $L^2_z(\mathbb{R})$.

To deal with $\Psi_-$, define two analytic functions in $\mathbb{C}\backslash \mathbb{R}$
$$g_1(z)=\mathcal C(\Psi_-S)(z),\quad g_2(z)=\mathcal C(\Psi_-S+f)^*(z),$$
Analogous manipulation , we integrate on the semi-circle in the upper half-plane and have
$$
    \oint g_1(z)g_2(z)dz=0.
$$
Since $g_1(z)=\mathcal O(z^{-1})$ and $g_2(z)\rightarrow 0$ as $|z|\rightarrow\infty$, we have
\begin{equation}
    \begin{split}
        0=&\int_{\mathbb{R}}\mathcal{P}^+(\Psi_-S)[\mathcal{P}^-(\Psi_-S+f)]^* dz\\
        =&\int_{\mathbb{R}}(\mathcal{P}^-(\Psi_-S)+\Psi_- S)[\mathcal{P}^-(\Psi_-S+f)]^* dz\\
        =&\int_{\mathbb{R}}(\Psi_--\mathcal{P}^-(f)+\Psi_-S)\Psi_-^* dz.
    \end{split}
\end{equation}
 Using the bound (\ref{3.15}) , (\ref{3.16}) and the H\"{o}lder inequality, there exists a positive constant $c_-$ such that
$$
    c_-\| \Psi_-\|_{L^2}^2\leq {\rm Re} \int_{\mathbb{R}} \Psi_-(I+S)\Psi_-^* dz={\rm  Re}\int_{\mathbb{R}} \mathcal{P}^-(f)\Psi_-^*dz\leq \| f\|_{L^2}\| \Psi_-\|_{L^2},\nonumber
$$
which  completes the estimates of $\Psi_-$:
\begin{equation}
 \|(I-\mathcal{P}_S^{-})^{-1}P^-f \|_{L^2_z}\leq c_-^{-1}\| f\|_{L^2_z}.\label{f1}
\end{equation}
To deal with $\Psi_+$, define two functions in $\mathbb{C}\backslash \mathbb{R}$
$$g_1(z)=\mathcal C(\Psi_+S)(z),\quad g_2(z)=\mathcal C(\Psi_+S+f)^*(z).$$
Performing the similar procedure leads to
\begin{equation}
    \begin{split}
        0=&\oint g_1(z)g_2(z) dz\\
        =&\int_{\mathbb{R}} \mathcal{P}^-(\Psi_+S)[\mathcal{P}^+(\Psi_+S+f)]^* dz\\
        =&\int_{\mathbb{R}}[\Psi_+-\mathcal{P}^+(f)][\Psi_+(I+S)]^* dz,
        \end{split}
\end{equation}
where we have used (\ref{3.23}).
Using the bounds (\ref{3.15}) and (\ref{3.16}) , there are positive constants $c_-$ and $c_+$ such that
\begin{equation}
    c_-\| \Psi_+\|_{L^2}^2\leq {\rm Re} \int_{\mathbb{R}} \Psi_+(I+S)^*\Psi_+^* dz={\rm Re} \int_{\mathbb{R}}\mathcal{P}^+(f)(I+S)^*\Psi_+^* dz\leq c_+\|f \|_{L^2}\| \Psi_+\|_{L^2},\nonumber
    \end{equation}
which means
\begin{equation}
    \| (I-P_S^-)^{-1}P^+f\|_{L^2_z}\leq c_-^{-1}c_+\| f\|_{L^2_z}.\label{f2}
\end{equation}
Combining (\ref{f1}) and (\ref{f2}), we obtain
$$\|(I-\mathcal{P}_S^-)^{-1} f \|_{L_z^2}\leq c\| f\|_{L_z^2},$$
where $c$ is a constant that only depends on $\| r(z)\|_{L_z^{\infty}}$.
\end{proof}
\subsection{Estimate on solutions to  the RH problem}
Next, we see solutions to  the RH problem for $M(x;z)$. Denote the functions $M_{\pm}$ column-wise
$$M_{\pm}(x;z)=[\mu_{\pm}(x;z), \nu_{\pm}(x;z)], $$
then  the functions $\Psi_{\pm}$ can be written  as
$$\Psi_{\pm}(x;z)=[\mu_{\pm}(x;z)-e_1, \nu_{\pm}(x;z)-e_2 ].$$
We have
\begin{equation}
    \mu_{\pm}(x;z)-e_1 =\mathcal{P}^{\pm}(M_-S)^{(1)}(x;z),\quad z\in{\mathbb{R}}\label{3.36}
\end{equation}
and
\begin{equation}
    \nu_{\pm}(x;z)-e_2 =\mathcal{P}^{\pm}(M_-S)^{(2)}(x;z),\quad z\in{\mathbb{R}}.\label{3.37}
\end{equation}
Combining (\ref{3.36}) with (\ref{3.37}), we obtain
\begin{equation}
M_{\pm}(x;z)={I}+\mathcal{P}^{\pm}(M_-(x;\cdot)S(x;\cdot))(z),\quad z\in \mathbb{R}.
\label{pleme}\end{equation}
There exist  unique solutions $M_{\pm}(x;z)$ to Eq.(\ref{pleme}).
Further, analytic extensions of $M_{\pm}(x;z)$ to $z\in \mathbb{C}^{\pm}$ are
\begin{equation}
M_{\pm}(x;z)={I}+\mathcal C (M_-(x;\cdot)S(x;\cdot))(z),\quad z\in{\mathbb{C}}^{\pm}.
\label{79}\end{equation}
\begin{lemma}\label{3.4} Let $r_{1,2}(z)\in H^1(\mathbb{R})$, satisfy $|r_{1,2}(z)|\leq1$ then there exists a constant $c$ only depending on $\| r(z)\|_{L^{\infty}}$ such that for every $x\in \mathbb{R}$,
\begin{equation}
    \| M_{\pm}(x;\cdot)-I\|_{L^2}\leq c(\|r_{1} \|_{L^2}+\|r_{1} \|_{L^2}).\label{3.42}
\end{equation}
\end{lemma}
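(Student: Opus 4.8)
The plan is to read $M_\pm - I$ directly off the Plemelj representation (\ref{pleme}) and reduce the estimate to two ingredients that are already available: the boundedness of the operator $(I-\mathcal{P}_S^-)^{-1}$ on $L^2_z(\mathbb{R})$ proved in the preceding lemma, and a direct $L^2$ bound on the jump matrix $S$ in terms of $\|r_1\|_{L^2}$ and $\|r_2\|_{L^2}$. No new analytic machinery should be needed, since the hard coercivity and invertibility work was done earlier.

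First I would treat the minus boundary value. Writing $\Psi_-(x;z)=M_-(x;z)-I$, the minus-sign case of (\ref{pleme}) gives $\Psi_-=\mathcal{P}^-(M_-S)=\mathcal{P}^-(\Psi_-S)+\mathcal{P}^-(S)=\mathcal{P}_S^-\Psi_-+\mathcal{P}^-(S)$, hence $(I-\mathcal{P}_S^-)\Psi_-=\mathcal{P}^-(S)$ and therefore $\Psi_-=(I-\mathcal{P}_S^-)^{-1}\mathcal{P}^-(S)$. Applying the boundedness of $(I-\mathcal{P}_S^-)^{-1}$ from the previous lemma, together with the $L^2$-isometry $\|\mathcal{P}^-(S)\|_{L^2}\leq\|S\|_{L^2}$ (the case $p=2$, $C_2=1$ of (\ref{3.13})), yields $\|\Psi_-\|_{L^2}\leq c\,\|S\|_{L^2}$ with $c$ depending only on $\|r\|_{L^\infty}$.

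Next I would estimate $\|S\|_{L^2}$ entrywise. The nonzero entries of $S$ are $\sigma r_1 r_2$, $\sigma r_2 e^{2\mathrm{i}xz}$ and $r_1 e^{-2\mathrm{i}xz}$; the exponential factors are unimodular and hence drop out of the $L^2$ norm, while the product term is controlled by $|r_1(z)|\leq 1$, giving $\|\sigma r_1 r_2\|_{L^2}\leq\|r_2\|_{L^2}$. Collecting these, $\|S\|_{L^2}\leq c\,(\|r_1\|_{L^2}+\|r_2\|_{L^2})$, which combined with the previous step proves the claim for $M_- - I$. For the plus boundary value I would use $M_+-I=\mathcal{P}^+(M_-S)=\mathcal{P}^+(\Psi_-S+S)$, so that $\|M_+-I\|_{L^2}\leq\|\Psi_-S\|_{L^2}+\|S\|_{L^2}\leq\|S\|_{L^\infty}\|\Psi_-\|_{L^2}+\|S\|_{L^2}$; since $|r_{1,2}|\leq 1$ bounds $\|S\|_{L^\infty}$ by a constant depending only on $\|r\|_{L^\infty}$, inserting the two bounds above closes the estimate.

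The only point requiring care — and what I would regard as the main, and rather minor, obstacle — is the bookkeeping that the scalar-operator bounds for $(I-\mathcal{P}_S^-)^{-1}$, $\mathcal{P}^\pm$, and $\mathcal{C}$ are to be applied componentwise to the matrix- and vector-valued quantities $M_\pm$, $\Psi_\pm$, and $S$, and that the matrix $L^2$ norm is controlled by the sum of the $L^2$ norms of its entries. There is no genuine analytic difficulty here, so the proof reduces essentially to assembling the estimates in the order above.
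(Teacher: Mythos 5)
Your proposal is correct and follows essentially the same route as the paper: both reduce the estimate to the singular integral equation $(I-\mathcal{P}_S^-)\Psi_-=\mathcal{P}^-(S)$, invoke the resolvent bound from the preceding lemma together with $\|\mathcal{P}^\pm\|_{L^2\to L^2}=1$, and use the entrywise bound $\|S(x;\cdot)\|_{L^2_z}\leq c\left(\|r_1\|_{L^2}+\|r_2\|_{L^2}\right)$ with $c$ depending only on $\|r_{1,2}\|_{L^\infty}$. Your treatment of $\Psi_+$ via $\mathcal{P}^+(\Psi_-S+S)$ and the bound $\|\Psi_-S\|_{L^2}\leq\|S\|_{L^\infty}\|\Psi_-\|_{L^2}$ is a harmless variant of the paper's appeal to the resolvent estimate for $\mathcal{P}^+S$, and in fact sidesteps the paper's slight imprecision in writing $(I-\mathcal{P}_S^-)\Psi_{\pm}=\mathcal{P}^-S$ for both signs.
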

\begin{proof}
Due to $r_{1,2} \in H^1 (\mathbb{R})$, we get  $r_{1,2} \in L^2(\mathbb{R}) \cap L^{\infty}(\mathbb{R}) $ and $S(x;\cdot)\in L^2_z $.
Moreover, there exists a constant $c$ only depending on $\|r_{1,2} \|_{L^{\infty}}$ such that
\begin{equation}
    \|S(x;\cdot) \|_{L_z^2}\leq c(\|r_{1} \|_{L^2}+\|r_{1} \|_{L^2}),\nonumber
\end{equation}
We obtain
\begin{equation}
    \|M_{\pm}-I \|_{L^{2}}=\| \psi_{\pm}\|_{L^2}
    \leq c(\|r_{1} \|_{L^2}+\|r_{1} \|_{L^2}),\nonumber
\end{equation}
where we have used the equation $(I-\mathcal{P}_S^-)\Psi_{\pm}=\mathcal{P}^-S$, and $c$ is another constant only depending on $\| r_{1,2}(z)\|_{L^{\infty}}$.
\end{proof}
\begin{proposition} For every $x_0\in \mathbb{R}^-$ and every $r_{1,2}(z)\in H^{1,1}(\mathbb{R})$, we have
\begin{align}
    &\sup_{x\in (-\infty,x_0)} \| \langle x \rangle \mathcal{P}^+(r_{2}e^{2izx})\|_{L^2_z} \leq c\| r_{2}\|_{H^1},\label{3.41}\\
    &\sup_{x\in (-\infty,x_0)} \| \langle x \rangle \mathcal{P}^-(r_{1}e^{-2izx})\|_{L^2_z} \leq c\| r_{1}\|_{H^1},\label{3.42}
\end{align}
\begin{align}
    &\sup_{x\in (-\infty,x_0)} \| \langle x \rangle \mathcal{P}^+(zr_{2}e^{2izx})\|_{L^2_z} \leq c\| r_{2}\|_{H^{1,1}},\nonumber\\
    &\sup_{x\in (-\infty,x_0)} \| \langle x \rangle \mathcal{P}^-(zr_{1}e^{-2izx})\|_{L^2_z} \leq c\| r_{1}\|_{H^{1,1}},\nonumber
\end{align}
where $\langle x \rangle=(1+x^2)^{1/2}$. Moreover, if $r_{1,2}\in L^{2,3}(\mathbb{R})$, then we have
\begin{align}
    &\sup_{x\in \mathbb{R}} \|z^i\mathcal{P}^+(z^kr_{2}e^{2izx}) \|_{L^2_z}\leq \| z^{i+k}r_{2}\|_{L^2_z}, \label{163} \\
    &\sup_{x\in \mathbb{R}} \|z^i\mathcal{P}^-(z^krr_{1}e^{-2izx}) \|_{L^2_z}\leq \| z^{i+k}r_{1}\|_{L^2_z}, \label{164}
\end{align}
where   $ i, k=0,1,2,3, i+k\leq 3 $ and  $c$ is a constant that depends on $\|r_{1,2}\|_{L^{\infty}}$ .
\end{proposition}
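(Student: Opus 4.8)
The plan is to reduce every bound to an application of Plancherel's theorem once the Plemelj projections are identified with Fourier restriction onto a half-line and the translation produced by the oscillatory factor $e^{\pm 2izx}$ is tracked. Writing $\widehat{f}$ for the Fourier transform in $z$, the Cauchy--Plemelj theory recalled in (\ref{3.13}) gives $\widehat{\mathcal{P}^{+}f}=\widehat{f}\,\chi_{\{\xi>0\}}$ and $\widehat{\mathcal{P}^{-}f}=\widehat{f}\,\chi_{\{\xi<0\}}$, while multiplication by $e^{2izx}$ translates the transform by $2x$, i.e. $\widehat{r_2 e^{2i\cdot x}}(\xi)=\widehat{r_2}(\xi-2x)$. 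Since $\mathcal{P}^{\pm}$ then act as orthogonal projections we have $\|\mathcal{P}^{\pm}\|_{L^2\to L^2}=1$, consistent with the constant $C_2=1$ in (\ref{3.13}), and this unit norm is what makes the constant $1$ in (\ref{163})--(\ref{164}) plausible.

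First I would establish (\ref{3.41}) and (\ref{3.42}). By Plancherel together with the two observations above,
\[
\|\mathcal{P}^{+}(r_{2}e^{2izx})\|_{L^2_z}^2=\frac{1}{2\pi}\int_{0}^{\infty}|\widehat{r_2}(\xi-2x)|^2\,d\xi=\frac{1}{2\pi}\int_{-2x}^{\infty}|\widehat{r_2}(\eta)|^2\,d\eta .
\]
For $x<x_0<0$ one has $-2x=2|x|>0$, so the integration variable obeys $\eta\ge 2|x|$, whence $\eta^2\ge 4x^2\ge x^2$ and therefore $\langle x\rangle^2\le\langle\eta\rangle^2$ on the whole range. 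Multiplying the identity by $\langle x\rangle^2$ and enlarging the domain of integration to all of $\mathbb{R}$ gives
\[
\langle x\rangle^2\|\mathcal{P}^{+}(r_{2}e^{2izx})\|_{L^2_z}^2\le \frac{1}{2\pi}\int_{\mathbb{R}}\langle\eta\rangle^2|\widehat{r_2}(\eta)|^2\,d\eta\le c\|r_2\|_{H^1}^2,
\]
uniformly in $x<x_0$, which is (\ref{3.41}). The bound (\ref{3.42}) is obtained identically, the only change being that $\mathcal{P}^{-}$ restricts to $\xi<0$ and $e^{-2izx}$ translates by $-2x$, so that the surviving integral is $\int_{-\infty}^{2x}$ with $2x<0$. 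The structural point, and the reason the hypothesis $x_0\in\mathbb{R}^{-}$ cannot be dropped, is that the half-line selected by $\mathcal{P}^{\pm}$ and the direction of the translation conspire so that only the tail of $\widehat{r}$ beyond $2|x|$ survives, and this tail decays fast enough to absorb the growing weight $\langle x\rangle$.

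The two $H^{1,1}$-estimates then follow at once by replacing $r_{1,2}$ with $zr_{1,2}$: applying (\ref{3.41})--(\ref{3.42}) to $zr_2$ bounds the left-hand sides by $c\|zr_2\|_{H^1}$, and $\|zr\|_{H^1}\le c\|r\|_{H^{1,1}}$ follows from $\partial_z(zr)=r+zr'$ together with the definition of the weighted space. For the final pair (\ref{163})--(\ref{164}) the mechanism is different and simpler, since there is no $x$-weight, the supremum runs over all of $\mathbb{R}$, and the constant is $1$. I would prove them by commuting the polynomial weight through the projection: in the Fourier picture $\mathcal{P}^{+}$ is multiplication by $\chi_{\{\xi>0\}}$ and multiplication by $z$ is $\partial_\xi$, so that $z^{i}\mathcal{P}^{+}(z^{k}r_{2}e^{2izx})=\mathcal{P}^{+}(z^{i+k}r_{2}e^{2izx})$, and the unit operator norm of $\mathcal{P}^{+}$ on $L^2$ together with $|e^{2izx}|=1$ yields
\[
\|z^{i}\mathcal{P}^{+}(z^{k}r_{2}e^{2izx})\|_{L^2_z}\le \|z^{i+k}r_{2}e^{2izx}\|_{L^2_z}=\|z^{i+k}r_{2}\|_{L^2_z},
\]
uniformly in $x$; (\ref{164}) is the mirror statement for $\mathcal{P}^{-}$.

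The main obstacle I anticipate is the weighted pair (\ref{3.41})--(\ref{3.42}): the weight $\langle x\rangle$ grows while the projected quantity must decay, and the whole argument rests on the sign bookkeeping that forces the surviving integral onto the far tail $\{\eta\ge 2|x|\}$ of $\widehat{r}$; getting the direction of the translation consistent with the half-line cut out by $\mathcal{P}^{\pm}$ is where the care is needed, and it is precisely this that uses $x<x_0<0$. By contrast (\ref{163})--(\ref{164}) are routine once one passes the polynomial weight through the Cauchy projection, the only delicate point there being to verify that the boundary contributions generated at $\xi=0$ when $z^{i}$ is commuted past $\chi_{\{\xi>0\}}$ do not spoil the $L^2$ bound.
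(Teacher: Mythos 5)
Your proofs of (\ref{3.41})--(\ref{3.42}) and of the two $H^{1,1}$ bounds are correct, and they follow essentially the same route as the paper: the paper's own proof is a one-line appeal to the analogous proposition in \cite{Pelinovsky}, whose argument is exactly your Plancherel computation (the oscillatory factor translates $\widehat{r}$, the projection restricts $\widehat{r}$ to a half-line, and for $x<x_0<0$ the two effects conspire so that only the tail $\{|\eta|\geq 2|x|\}$ survives, on which $\langle x\rangle\leq\langle\eta\rangle$), followed by the substitution $r\mapsto zr$ together with $\|zr\|_{H^{1}}\leq c\|r\|_{H^{1,1}}$. (The missing minus sign in $\widehat{\mathcal{P}^{-}f}=-\chi_{\{\xi<0\}}\widehat{f}$ is harmless, since only norms are used.)

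The final pair (\ref{163})--(\ref{164}) is where your argument has a genuine gap, located precisely at the point you deferred. The commutation identity $z^{i}\mathcal{P}^{+}(z^{k}r_{2}e^{2izx})=\mathcal{P}^{+}(z^{i+k}r_{2}e^{2izx})$ is false. From $\frac{z}{s-z}=-1+\frac{s}{s-z}$ one gets the exact relation
\begin{equation*}
z\,\mathcal{P}^{\pm}(f)(z)=\mathcal{P}^{\pm}(sf)(z)-\frac{1}{2\pi i}\int_{\mathbb{R}}f(s)\,ds ,
\end{equation*}
consistent with (\ref{3.14}); hence the commutator of multiplication by $z$ with $\mathcal{P}^{\pm}$ is the rank-one map $f\mapsto-\frac{1}{2\pi i}\int_{\mathbb{R}}f$, which in your Fourier picture is exactly the delta function created at $\xi=0$ when $\partial_{\xi}$ hits $\chi_{\{\xi>0\}}$. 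This correction is a constant in $z$ (after iterating, a polynomial of degree $i-1$), and an $L^{2}$ function plus a nonzero constant is not in $L^{2}_{z}(\mathbb{R})$. For $f=z^{k}r_{2}e^{2izx}$ the constant equals $-\frac{1}{2\pi i}\,\widehat{z^{k}r_{2}}(-2x)$, which cannot vanish for all $x$ unless $r_{2}\equiv0$. So the boundary contribution you promised to control does spoil the bound: already for $i=1$, $k=0$, $x=0$ and $r_{2}$ a Gaussian, $z\mathcal{P}^{+}(r_{2})=\mathcal{P}^{+}(sr_{2})-\frac{1}{2\pi i}\int_{\mathbb{R}}r_{2}$ is an $L^{2}$ function plus a nonzero constant, so the left-hand side of (\ref{163}) is infinite while the right-hand side is finite. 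What your computation does establish is the version with every power of $z$ inside the projection, $\sup_{x\in\mathbb{R}}\|\mathcal{P}^{\pm}(z^{i+k}r\,e^{\pm2izx})\|_{L^{2}_{z}}\leq\|z^{i+k}r\|_{L^{2}_{z}}$, an immediate consequence of $\|\mathcal{P}^{\pm}\|_{L^{2}\to L^{2}}=1$ and $|e^{\pm2izx}|=1$; that is the form actually proved in \cite{Pelinovsky}, and the only form that can legitimately be carried into the later estimates. In short, the defect lies as much in the displayed statement (with $z^{i}$ outside $\mathcal{P}^{\pm}$) as in your proof, but as written your derivation of (\ref{163})--(\ref{164}) is valid only for $i=0$.
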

\begin{proof}
By an analogous analysis in \cite{Pelinovsky}, we can easy get this proposition.
\end{proof}

In order to obtain  estimates on the vector columns $\mu_-(x;z)-e_1$ and $\nu_+(x;z)-e_2$
 that will be needed in the subsequent section, we rewrite
 functions $\mu_-(x;z)-e_1$ and $\nu_+(x;z)-e_2$ by (\ref{pleme}) as
\begin{equation}
\mu_-(x;z)-e_1=\mathcal{P}^-(r_{1}e^{-2izx}\nu_+(x;z))(z), z\in \mathbb{R}
\label{89}\end{equation}
and
\begin{equation}
\nu_+(x;z)-e_2  =\mathcal{P}^+({\sigma r_{2}}e^{2izx}\mu_-(x;z))(z), z\in \mathbb{R},
\label{91}
\end{equation}
where we have used the fact
\begin{equation}
    \begin{split}
        M_-S=&[\mu_-,\nu_-]\left( \begin{array}{cc}
        \sigma r_{1}r_{2} &\sigma r_{2}e^{2izx}\\
        r_{1}e^{-2izx} & 0
        \end{array}
        \right)
        = [r_{1}e^{-2izx}\nu_+,\sigma r_{2}e^{2izx}\mu_-].
    \end{split}
\label{3.53}\end{equation}

Define  a function
\begin{equation}
   N(x;z)=[\mu_-(x;z)-e_1, \nu_+(x;z)-e_2],\nonumber
\end{equation}
which satisfies
\begin{equation}
    \begin{split}
        N-\mathcal{P}^+(NS_+)-\mathcal{P}^-(NS_-)=F,
    \end{split}
\label{1120}\end{equation}
where
\begin{equation}
    \begin{split}
        &F(x;z)=[\mathcal{P}^-(r_{1}e^{-2izx})e_2,\mathcal{P}^+(\sigma r_{2}e^{2izx})e_1],\\
        &S_+(x;z)=\left(\begin{array}{cc}
        0 & \sigma r_{2}e^{2izx}\\
        0 & 0
        \end{array}\right),\quad S_-(x;z)=\left(\begin{array}{cc}
        0 & 0\\
       r_{1}e^{-2izx} & 0
        \end{array}\right).
    \end{split}
\nonumber\end{equation}
\begin{lemma}
Let $r_{1,2}\in H^{1,1}(\mathbb{R})$, then for every $x_0\in \mathbb{R}^-$, we have
\begin{align}
    &\sup_{x\in (-\infty,x_0)} \| \langle x \rangle \mu_-^{(2)}(x;z)\|_{L^2_z}\leq c\| r_{1}\|_{H^{1}},\label{3.56}\\
    &\sup_{x\in (-\infty,x_0)} \| \langle x \rangle \nu_+^{(1)}(x;z)\|_{L^2_z}\leq c\| r_{2}\|_{H^{1}},\label{3.57}
\end{align}
\begin{align}
&\sup_{x\in (-\infty,x_0)}  \| \langle x \rangle \partial_x \mu_-^{(2)}(x;z)\|_{L^2_z}\leq c\|r_{1} \|_{ H^{1,1}}\label{172} \\
&\sup_{x\in (-\infty,x_0)}  \| \langle x \rangle \partial_x \nu_+^{(1)}(x;z)\|_{L^2_z}\leq c\|r_{2}\|_{ H^{1,1}}\label{173},
\end{align}
where $c$ is a constant that only depends on $\|r_{1,2}\|_{L^{\infty}}$.
In addition, if $r\in L^{2,3}$, then we have
\begin{align}
&\sup_{x\in \mathbb{R}} \|  z^{k}\partial^{i}_x \mu_-^{(2)}(x;z)\|_{L^2_z}\leq c\|r_{1} \|_{ L^{2,i+k}} \label{178}\\
&\sup_{x\in \mathbb{R}} \|  z^{k}\partial^{i}_x \nu_+^{(1)}(x;z)\|_{L^2_z}\leq c\| r_{2} \|_{ L^{2,i+k}}, \label{179}
\end{align}
where   $ i, k=0,1,2,3, i+k\leq 3 $ and  $c$ is a constant that depends on $\|r_{1,2}\|_{L^{\infty}}$ .
\end{lemma}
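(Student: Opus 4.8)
The plan is to exploit that the scalar relations \eqref{89} and \eqref{91}, together with the column structure \eqref{3.53}, decouple the two ``off--diagonal'' unknowns $\mu_-^{(2)}$ and $\nu_+^{(1)}$ from the diagonal entries. Indeed, reading \eqref{91} componentwise gives $\nu_+^{(2)}-1=\mathcal{P}^+(\sigma r_2 e^{2izx}\mu_-^{(2)})$, and reading \eqref{89} componentwise gives $\mu_-^{(2)}=\mathcal{P}^-(r_1 e^{-2izx}\nu_+^{(2)})$. Writing $\nu_+^{(2)}=1+(\nu_+^{(2)}-1)$ and substituting, I obtain a closed scalar integral equation
\begin{equation}
(I-\mathcal{L}_1)\mu_-^{(2)}=\mathcal{P}^-\big(r_1 e^{-2izx}\big),\qquad
\mathcal{L}_1 h:=\mathcal{P}^-\big(r_1 e^{-2izx}\,\mathcal{P}^+(\sigma r_2 e^{2izx}h)\big),
\nonumber
\end{equation}
and, symmetrically, an equation for $\nu_+^{(1)}$ with operator $\mathcal{L}_2 h=\mathcal{P}^+(\sigma r_2 e^{2izx}\mathcal{P}^-(r_1 e^{-2izx}h))$ and forcing $\mathcal{P}^+(\sigma r_2 e^{2izx})$. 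Since $\|\mathcal{P}^\pm\|_{L^2_z\to L^2_z}=1$ and the phases are unimodular, $\|\mathcal{L}_1 h\|_{L^2_z}\le\|r_1\|_{L^\infty}\|r_2\|_{L^\infty}\|h\|_{L^2_z}$, and because $\|r_{1,2}\|_{L^\infty}<1$ by the earlier bounds, $I-\mathcal{L}_1$ is invertible on $L^2_z$ \emph{uniformly in} $x$, with $\|(I-\mathcal{L}_1)^{-1}\|\le(1-\|r_1\|_{L^\infty}\|r_2\|_{L^\infty})^{-1}$.

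For the base bounds \eqref{3.56}--\eqref{3.57} the key observation is that $\langle x\rangle$ is constant in $z$ and therefore commutes with $\mathcal{L}_1$; hence $\langle x\rangle\mu_-^{(2)}=(I-\mathcal{L}_1)^{-1}\big(\langle x\rangle\mathcal{P}^-(r_1 e^{-2izx})\big)$. Applying the uniform resolvent bound and then the weighted forcing estimate $\sup_{x<x_0}\|\langle x\rangle\mathcal{P}^-(r_1e^{-2izx})\|_{L^2_z}\le c\|r_1\|_{H^1}$ from the preceding Proposition yields \eqref{3.56}; the estimate \eqref{3.57} is identical, using \eqref{3.41}.

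For the derivative bounds \eqref{172}--\eqref{173} I would differentiate the scalar equation in $x$. The derivative falls on the forcing and on the two phases inside $\mathcal{L}_1$, giving
$(I-\mathcal{L}_1)\partial_x\mu_-^{(2)}=\mathcal{P}^-(-2iz\,r_1 e^{-2izx})+\mathcal{R}$,
where $\mathcal{R}=[\partial_x,\mathcal{L}_1]\mu_-^{(2)}$ collects two terms, each carrying one extra factor of $z$ paired against an already-controlled copy of $\mu_-^{(2)}$ or $\nu_+^{(1)}$. The new forcing is handled by the weighted bound $\sup_{x<x_0}\|\langle x\rangle\mathcal{P}^-(zr_1e^{-2izx})\|_{L^2_z}\le c\|r_1\|_{H^{1,1}}$ of the preceding Proposition, while $\mathcal{R}$ is absorbed using the base estimates together with the $z$-weighted forcing; commuting $\langle x\rangle$ through $(I-\mathcal{L}_1)^{-1}$ as before closes \eqref{172}--\eqref{173}.

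Finally, \eqref{178}--\eqref{179} follow from the same scalar equation after applying $z^k\partial_x^i$. Here lies the only genuine difficulty: unlike $\langle x\rangle$, the weight $z^k$ does \emph{not} commute with $\mathcal{P}^\pm$, so it cannot be pulled through the resolvent directly. I would instead set $h_{i,k}:=z^k\partial_x^i\mu_-^{(2)}$ and derive a dedicated equation $(I-\mathcal{L}_1)h_{i,k}=(\text{forcing})$, repeatedly invoking the Plemelj commutator identity $z\mathcal{P}^\pm(f)=\mathcal{P}^\pm(zf)-\tfrac{1}{2\pi\mathrm{i}}\int_{\mathbb{R}}f\,d\zeta$; each commutation transfers one power of $z$ onto $r_{1,2}$ and leaves a boundary term of strictly lower $z$-order. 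After $\partial_x^i$ has supplied $i$ powers of $z$ from the phases, the total weight is $z^{i+k}$ with $i+k\le 3$, so the principal forcing is controlled by \eqref{163}--\eqref{164} in terms of $\|z^{i+k}r_{1,2}\|_{L^2_z}=\|r_{1,2}\|_{L^{2,i+k}}$, while the boundary terms are dispatched by induction on $i+k$ from the lower-order bounds; the uniform resolvent estimate then gives the conclusion over all $x\in\mathbb{R}$. I expect this last step --- the bookkeeping of the $z$-weight commutators and their boundary contributions --- to be the main technical obstacle, whereas the $\langle x\rangle$-weighted steps are immediate precisely because that weight commutes with every $z$-operator in play.
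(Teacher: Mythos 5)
Your treatment of the weighted\mbox{-}in\mbox{-}$x$ estimates \eqref{3.56}--\eqref{3.57} and \eqref{172}--\eqref{173} is correct, and its solvability mechanism differs genuinely from the paper's. You decouple \eqref{89} and \eqref{91} into two scalar equations $(I-\mathcal{L}_1)\mu_-^{(2)}=\mathcal{P}^-(r_1e^{-2izx})$, $(I-\mathcal{L}_2)\nu_+^{(1)}=\mathcal{P}^+(\sigma r_2e^{2izx})$ and invert by Neumann series; the paper instead keeps the matrix form, rewriting \eqref{1120} as $G-\mathcal{P}^-(GS)=F$ with $G=N(I-S_+)$ (equation \eqref{1125}) and invoking the bound on $(I-\mathcal{P}_S^-)^{-1}$ proved earlier via the Fredholm alternative and the positive definiteness of $I+S_H$. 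Your route is more elementary and yields the explicit resolvent constant $(1-\|r_1\|_{L^\infty}\|r_2\|_{L^\infty})^{-1}$, but it requires the strict inequality $\|r_1\|_{L^\infty}\|r_2\|_{L^\infty}<1$, which in this nonlocal setting is available only because the small-norm lemma gives $|r_{1,2}(z)|<1$ pointwise and $H^{1,1}(\mathbb{R})\hookrightarrow C_0(\mathbb{R})$ upgrades this to a strict bound on the suprema; the paper's route needs only uniform positive definiteness of $I+S_H$ (a condition of a different nature, neither stronger nor weaker) and recycles machinery it needs anyway for unique solvability of the RH problem. The remaining steps --- commuting the $z$-independent weight $\langle x\rangle$ through the resolvent, quoting \eqref{3.41}--\eqref{3.42} for the forcing, differentiating the equation in $x$ --- coincide with the paper's. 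Two minor inaccuracies: in your commutator $\mathcal{R}$ the extra factor of $z$ is paired with $\nu_+^{(2)}-1$ and $\mu_-^{(2)}$, not $\nu_+^{(1)}$; and those terms are estimated by $\|zr_{1,2}\|_{L^\infty}$ times the base estimates, so the constant really involves $\|zr_{1,2}\|_{L^\infty}$ as well --- the paper's own proof is loose in exactly the same way.

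The last part, \eqref{178}--\eqref{179}, contains a genuine gap, located exactly where you predicted difficulty. The identity $z\mathcal{P}^\pm(f)=\mathcal{P}^\pm(\zeta f)-\frac{1}{2\pi\mathrm{i}}\int_{\mathbb{R}}f\,d\zeta$ is correct, but its boundary term is a \emph{constant in $z$} (iterating produces polynomials in $z$), and a nonzero constant is not an element of $L^2_z(\mathbb{R})$; it is not ``of strictly lower order'' in any sense that lets an induction on $i+k$ absorb it. This is not repairable bookkeeping: since $z\,\mathcal{P}^\pm(f)(z)\to-\frac{1}{2\pi\mathrm{i}}\int_{\mathbb{R}}f\,d\zeta$ for $f\in L^1_z(\mathbb{R})$ (the boundary-value analogue of \eqref{3.14}), the reconstruction formulas \eqref{4.4} and \eqref{134} give $z\nu_+^{(1)}(x;z)\to-\tfrac{1}{2\mathrm{i}}u(x)$ and $z\mu_-^{(2)}(x;z)\to\tfrac{\sigma}{2\mathrm{i}}u(-x)$ as $|z|\to\infty$, so for $k\geq1$, $i=0$ (and similarly for higher $i$, with $\partial_x^iu$ in the limit) the left-hand sides of \eqref{178}--\eqref{179} are infinite for every nontrivial potential. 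These estimates can only be meaningful if all powers of $z$ remain \emph{inside} the projections $\mathcal{P}^\pm$ --- which is how the forcing bounds \eqref{163}--\eqref{164} must be read and applied --- or if the nonzero large-$z$ limits are subtracted first, exactly as Proposition \ref{prop25} does on the direct-scattering side with $p_1^\pm$, $q_1^\pm$, $g_1^\pm$. In fairness, the paper's own proof of \eqref{178}--\eqref{179} is a single sentence (``repeating the analysis \dots we derive the estimates'') that glosses over the same defect; but your induction, as described, cannot close, and a correct write-up must either keep the $z$-weights inside $\mathcal{P}^\pm$ throughout or reformulate the claim with the limiting values subtracted.
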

\begin{proof}
Note  $\mathcal{P}^+-\mathcal{P}^-=I$ and
$S_++S_-=(I-S_+)S$,
Eq.(\ref{1120}) can be rewritten as
\begin{equation}
    G-\mathcal{P}^-(GS)=F
\label{1125}\end{equation}
with $G=N(I-S_+)$. And the matrix $G(x;z)$ is written component-wise as
\begin{equation}
    G(x;z)=\left( \begin{array}{cc}
     \mu_-^{(1)}(x;z)-1    &  \nu_+^{(1)}-\sigma r_{2}e^{2izx}(\mu_-^{(1)}(x;z)-1)\\
    \mu_-^{(2)}(x;z)     &  \nu_+^{(2)}-1-\sigma r_{2}e^{2izx}\mu_-^{(2)}(x;z)
    \end{array}\right).\nonumber
\end{equation}
Comparing the second row of $F(x;z)$ with $G(x;z)$ and utilizing the bound (\ref{3.42}), we have
\begin{equation}
\begin{split}
    &\sup_{x\in (-\infty,x_0)}\|\langle x \rangle \mu_-^{(2)} \|_{L^2_z}\leq c\sup_{x\in (-\infty,x_0)}\|\langle x\rangle \mathcal{P}^{-}(r_{1}e^{-2izx}) \|_{L^2_z},\\
   &\|\nu_+^{(2)}-1- r_{1}e^{-2izx}\mu_-^{(2)}(x;z) \|_{L^2_z}\leq c\|\mathcal{P}^{-}(r_{1}e^{-2izx}) \|_{L^2_z},
\end{split}\label{3.64}
\end{equation}
where $c$ is a constant that depends on $\|r \|_{L^{\infty}}$. Substituting the bound (\ref{3.42}) into (\ref{3.64}), we obtain the estimate  (\ref{3.56}).

Similarly, comparing the first row of $F(x;z)$ and $G(x;z)$ yields
\begin{equation}
    \begin{split}
        &\|\mu_-^{(1)}(x;z)-1 \|_{L^2_z}\leq c\|\mathcal{P}^+(\sigma r_{2}e^{2izx}) \|_{L^2_z}, \\
        &\|\nu_+^{(1)}(x;z)-\sigma r_{2}e^{2izx}(\mu_-^{(1)}(x;z)-1) \|_{L^2_z}\leq c\|\mathcal{P}^+(\sigma r_{2}e^{2izx}) \|_{L^2_z}.
    \end{split}\label{1288}
\end{equation}

Taking derivative in x of (\ref{1120}), we obtain
\begin{equation}
    \partial_x N-\mathcal{P}^+(\partial_x N)S_+-\mathcal{P}^-(\partial_x N)S_-= F_{1}
\label{136}\end{equation}
with
\begin{equation}
    \begin{split}
        F_{1}=&\partial_x F+\mathcal{P}^+N\partial_xS_++\mathcal{P}^-N\partial_xS_- \\
        =&2i[e_2\mathcal{P}^-(-zr_{1}e^{-2izx}),e_1\mathcal{P}^+(z\sigma r_{2}e^{2izx})] \\
        &+2i\left(\begin{array}{cc}
       \mathcal{ P}^-(-zr_{1}(z)e^{-2izx}\nu_+^{(1)}(x;z)) &\mathcal{P}^+(z\sigma r_{2}e^{2izx}(\mu_-^{(1)}(x;z)-1)) \\
        \mathcal{P}^-(-zr_{1}e^{-2izx}(\nu_+^{(2)}(x;z)-1)) & \mathcal{P}^+(z\sigma r_{2}e^{2izx}\mu_-^{(2)}(x;z))
        \end{array}\right).
    \end{split}    \nonumber
\end{equation}
According to the estimates (\ref{163}) and (\ref{164}), together with the triangle equality, we obtain
\begin{align}
&z(\mu_-(x;z)-e_1)\in L_x^{\infty}((-\infty,x_0);L^2_z(\mathbb{R})),\nonumber\\
&z(\nu_+(x;z)-e_2)\in L_x^{\infty}((-\infty,x_0);L^2_z(\mathbb{R})).\nonumber
\end{align}
On account of the bound (\ref{3.41}), (\ref{3.42}) and
$r_{1,2}(z)\in L^{\infty}(\mathbb{R})$ , we conclude that $ F_{1}$ belongs to $L_x^{\infty}((-\infty,x_0);L^2_z(\mathbb{R}))$.
On account of the bound (\ref{3.56}), (\ref{3.57}) and
$r_{1,2}(z)\in L^{\infty}(\mathbb{R})$ , we conclude that $ \sup_{x\in (-\infty,x_0)}\langle x \rangle F_{1}$ belongs to $L_x^{\infty}((-\infty,x_0);L^2_z(\mathbb{R}))$. We get  (\ref{172}) and  (\ref{173}).

Taking  $j$-order  derivative     of (\ref{1120}), we obtain
\begin{equation}
    \partial^{j}_x N-\mathcal{P}^+(\partial^{j}_x N)S_+-\mathcal{P}^-(\partial^{j}_x N)S_-=F_{j},
\label{137}\end{equation}
with
$$
F_{j}=\partial_x F_{j-1}+\mathcal{P}^+\partial^{j-1}_xN_{x}\partial_xS_++\mathcal{P}^-\partial^{j-1}_xN_{x}\partial_xS_-.
$$
Where $j=2, 3$.
Repeating the analysis for (\ref{1125}),and using  (\ref{163}), (\ref{164})  we derive the estimates  (\ref{178}), (\ref{179}).
\end{proof}

\section{Reconstruction and estimates of the potential} \label{sec4}

We shall now recover the potential $u$  from the matrices $M_{\pm}$, which satisfy the integral equations (\ref{79}). This will gives us the map
$$
 H^{1,1}(\mathbb{R}) \cap L^{2,3}(\mathbb{R}) \ni r_{1,2} \mapsto u \in   H^{1,1}(\mathbb{R})\cap H^3(\mathbb{R}).
$$

\subsection{Estimates on the negative half-line}
Recalling (\ref{3.9})  which show that
\begin{equation}
u(x)=2iz\lim_{z\rightarrow \infty}(M_{\pm})_{12}.
\label{4.1}\end{equation}
We will use (\ref{4.1}) to get estimates of $u$ on the negative half-line. It follows from  (\ref{79}) and (\ref{3.9})  that
\begin{equation}
    u(x)=2i\lim_{|z|\rightarrow\infty}z\mathcal C((M_-S)_{12} .\label{136}
\end{equation}
 Since $r\in H^{1,1}(\mathbb R)\cap L^{2,3}(\mathbb R)$, we have $S(x;\cdot)\in L^1(\mathbb{R})\cap L^2(\mathbb{R})$. Besides,  the estimate (\ref{3.42}) implies that $M_-(x;\cdot)-I\in L^2(\mathbb{R})$. Therefore, we arrive at $L^1(\mathbb{R})$. Subsequently, Applying (\ref{3.14}) to (\ref{136}), we obtain
\begin{equation}
    \begin{split}
       u(x)&=\frac{1}{\pi}\int_{\mathbb{R}}\sigma r_{2}e^{2izx} \mu_-^{(1)}(x;z)dz\\
        &=\frac{1}{\pi}\int_{\mathbb{R}}\sigma r_{2}e^{2izx}P^{-}(re^{-2izx}\nu_+^{(1)})\sigma r_{2}e^{2izx}dz+\frac{1}{\pi}\int_{R}\sigma r_{2}e^{2izx}dz,
    \end{split}\label{4.4}
\end{equation}
where we have used the identity $\mu_-^{(1)}(x;z)-1=P^{-}(r_{1}e^{-2izx}\nu_+^{(1)})$ .
\begin{lemma} \label{3.6}
Let $r_{1,2}(z)\in H^{1,1}(\mathbb{R}) \cap L^{2,3}(\mathbb{R})$ , then
$u \in  H^{1,1}(\mathbb{R})\cap H^3(\mathbb{R})$. Moreover, we have
\begin{equation}
    \| u\|_{ H^{1,1}(\mathbb{R}^{-})\cap H^{3}(\mathbb{R}^{-})}\leq c (\| r_{1}\|_{ H^{1,1}(\mathbb{R})\cap L^{2,3}(\mathbb{R})}+\| r_{2}\|_{ H^{1,1}(\mathbb{R})\cap L^{2,3}(\mathbb{R})}),
\label{104}\end{equation}
where $c$ is a constant that  depends on $\| r_{1,2}\|_{L^{\infty}}$ and $\|zr_{1,2}\|_{L^{\infty}}$.
\end{lemma}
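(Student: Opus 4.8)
The plan is to start from the reconstruction formula established in (4.4),
$$u(x) = \frac{1}{\pi}\int_{\mathbb{R}} \sigma r_2(z)\,e^{2izx}\,\mu_-^{(1)}(x;z)\,dz,$$
and to exploit the Fourier-type duality between the spectral variable $z$ and the physical variable $x$: differentiation $\partial_x$ corresponds to multiplication by $2iz$, while multiplication by $\langle x\rangle$ corresponds to $\partial_z$ (via $xe^{2izx}=\tfrac{1}{2i}\partial_z e^{2izx}$). Thus $H^3(\mathbb{R}^-)$-regularity of $u$ should be governed by the $z^3$-weighted norm of $r_{1,2}$, i.e. membership in $L^{2,3}$, and the $H^{1,1}(\mathbb{R}^-)$-control by the $H^{1,1}$-norm of $r_{1,2}$. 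Using $\mu_-^{(1)}=1+\mathcal P^-(r_1 e^{-2i(\cdot)x}\nu_+^{(1)})$ from (89), I would split $u=u_1+u_2$, where $u_1(x)=\frac{\sigma}{\pi}\int_{\mathbb{R}} r_2(z)e^{2izx}\,dz$ is the leading Fourier term and $u_2$ carries the correction factor $\mu_-^{(1)}-1$.

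For $u_1$ every bound is immediate from Plancherel's theorem in $x$: one gets $\|\partial_x^j u_1\|_{L^2_x}=c\|z^j r_2\|_{L^2_z}\le c\|r_2\|_{L^{2,3}}$ for $j=0,1,2,3$, while each weight $\langle x\rangle$ is traded for a $\partial_z$, so that $\|\langle x\rangle\partial_x^j u_1\|_{L^2_x}$ is controlled by $\|r_2\|_{H^{1,1}}$ for $j=0,1$. This produces the $u_1$-contribution to the estimate (104).

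For the correction $u_2$ I would differentiate through the Leibniz rule, placing each $\partial_x$ either on the exponential $e^{2izx}$ (producing a factor $2iz$) or on $\mu_-^{(1)}-1=\mathcal P^-(r_1 e^{-2i(\cdot)x}\nu_+^{(1)})$, and converting each weight $\langle x\rangle$ into a $\partial_z$ by integration by parts in $z$. Every resulting term is then estimated by the Cauchy--Schwarz inequality in $z$, pairing the $z$-weighted $L^2_z$-norms of $r_{1,2}$ (finite since $r_{1,2}\in L^{2,3}\cap H^{1,1}$) and the boundedness of $\mathcal P^\pm$ on $L^2$ against the uniform-in-$x$ $L^2_z$-bounds on $\nu_+^{(1)}$, $\mu_-^{(2)}$ and their $z$- and $x$-derivatives supplied by the preceding lemma, namely (3.56)--(3.57), (172)--(173) and (178)--(179). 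The restriction to the negative half-line is essential at this point: the $\langle x\rangle$-weighted bounds (3.56)--(3.57) and (172)--(173) force the relevant $L^2_z$-norms of $\mu_-^{(1)}-1$ and its first $x$-derivative to decay like $\langle x\rangle^{-1}$, and $\langle x\rangle^{-1}\in L^2(-\infty,x_0)$, which is exactly what renders the correction $L^2$-integrable in $x$ over $\mathbb{R}^-$.

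The main obstacle is the control of the higher-order $x$-derivatives of $u_2$. The $\langle x\rangle$-weighted estimates of the preceding lemma are available only up to first order in $\partial_x$, whereas the second- and third-order derivatives can be controlled only through the unweighted, uniform-in-$x$, $z$-weighted bounds (178)--(179), which supply no pointwise decay in $x$. For those terms the $L^2_x(\mathbb{R}^-)$-integrability cannot be read off from a decay factor and must instead be extracted by a Plancherel-in-$x$ argument applied directly to the double oscillatory integral (the $z$-integral against the Cauchy kernel inside $\mathcal P^-$ together with the external factor $e^{2izx}$), in the spirit of the analysis in \cite{Pelinovsky}. The delicate bookkeeping is to arrange, via Leibniz and integration by parts in $z$, that each term carries at most three combined units of $z$-weight-plus-$x$-derivative, so that it remains within the range $i+k\le 3$ admitted by the hypotheses $r_{1,2}\in L^{2,3}$ and $r_{1,2}\in H^{1,1}$. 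Summing the finitely many Leibniz terms over the two pieces $u_1$ and $u_2$ then yields the bound (104).
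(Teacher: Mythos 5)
There is a genuine gap, and it sits exactly at the heart of the lemma (the $H^3(\mathbb{R}^-)$ part). Your treatment of $u_1$ and of the low-order terms of the correction $u_2$ matches the paper's. But the decisive device in the paper's proof is one you never invoke: after substituting \eqref{89} into the correction term \eqref{4.99}, the paper applies Fubini's theorem to move the Plemelj projector off the factor containing $\nu_+^{(1)}$ and onto the other reflection coefficient, rewriting the correction as $-\int_{\mathbb{R}} r_1(z)e^{-2izx}\nu_+^{(1)}(x;z)\,\mathcal P^+(\sigma r_2 e^{2isx})(z)\,dz$. The point of this swap is that the factor $\mathcal P^+(\sigma r_2 e^{2isx})$ (and its $zr_2$ analogue) carries the weighted half-line bounds \eqref{3.41}, so the $\langle x\rangle$-weight can always be placed on \emph{that} factor in the H\"older pairing. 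This is precisely what rescues the second- and third-derivative terms: a Leibniz term of $\partial_x^3 u_2$ has the form $z^b\,\partial_x^a\nu_^{(1)}_+\cdot \mathcal P^+(s^c\sigma r_2 e^{2isx})$ with $a+b+c=3$, and whenever $a\geq 2$ one necessarily has $b+c\leq 1$, so the projection factor admits a weighted estimate (\eqref{3.41} or its $zr_2$ counterpart), while the high-derivative factor $z^b\partial_x^a\nu_+^{(1)}$ is controlled by the \emph{unweighted} bounds \eqref{178}--\eqref{179}; when instead $a\leq 1$, the weight goes on $\nu_+^{(1)}$ or $\partial_x\nu_+^{(1)}$ via \eqref{3.57}, \eqref{173}, and the projection factor is handled by \eqref{163}. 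Every term therefore decays pointwise like $\langle x\rangle^{-1}$, which is square-integrable on $(-\infty,x_0)$.

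Consequently, your central claim --- that for $\partial_x^2 u_2$ and $\partial_x^3 u_2$ ``the $L^2_x(\mathbb{R}^-)$-integrability cannot be read off from a decay factor'' --- is false, and it is false precisely because you overlooked the weighted projection bounds \eqref{3.41} (which you never cite; you only use $L^2$-boundedness of $\mathcal P^\pm$ together with the bounds on $\nu_+^{(1)}$, $\mu_-^{(2)}$). Having discarded the correct mechanism, you defer the entire $H^3$ estimate to an unspecified ``Plancherel-in-$x$ argument applied directly to the double oscillatory integral.'' That is not a proof: the integrand contains $\nu_+^{(1)}(x;z)$, whose $x$-dependence is not oscillatory, so Plancherel in $x$ does not apply in any direct way, and no details are supplied for how to make it apply. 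Since the whole content of the lemma beyond $u\in H^{1,1}(\mathbb{R}^-)$ is exactly these higher-derivative estimates, the proposal as written does not establish the statement.
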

\begin{proof}
Recalling (\ref{4.4}) and the results from the Fourier theory. For a function $r_{1,2}(z)\in L^2(\mathbb{R})$, by Parseval's equation, we have
\begin{equation}
    \| r_{1,2}\|_{L^2}=\|\hat{r}_{1,2} \|_{L^2},\nonumber
\end{equation}
where the function $\hat{r}_{1,2}$ denotes the Fourier transform.
Since $L^{2,3}(\mathbb{R})\cap H^{1,1}(\mathbb{R})$, the second term of (\ref{4.4}) belongs to $ H^{1,1}(\mathbb{R})\cap H^{3}(\mathbb{R})$  due to the property $\widehat{\partial_z r_{1,2}(z)}=x\hat{r}_{1,2}(x)$.

Let
\begin{equation}
    \uppercase\expandafter{\romannumeral1}(x)=\int_{\mathbb{R}}\sigma r_{2}e^{2izx} ( \mu_-^{(1)}(x;z)-1 ) dz.\label{4.99}
\end{equation}
Substituting (\ref{91}) into (\ref{4.99}) and applying the Fubini's theorem yields
\begin{equation}
    \begin{split}
       \uppercase\expandafter{\romannumeral1}(x)=&\int_{\mathbb{R}} \sigma r_{2}e^{2izx} \lim_{\epsilon\rightarrow 0} \frac{1}{2\pi i}\int_{\mathbb{R}}\frac{r_{1}(s)e^{-2isx}\nu_+^{(1)}(s)}{s-(z-i\epsilon)}dsdz\\
       =&-\int_{\mathbb{R}}r_{1}(s) e^{-2isx} \nu_+^{(1)}(s)\lim_{\epsilon\rightarrow 0}\frac{1}{2\pi i}\int_{\mathbb{R}}\frac{\sigma r_{2}e^{2izx}}{z-(s+i\epsilon)} dzds\\
       =& -\int_{\mathbb{ R}} r_{1}(z)e^{-2izx}\nu_+^{(1)}(z) P^+(\sigma r_{2}e^{2isx})(z)dz.
    \end{split}\nonumber
\end{equation}
 Therefore, for every $x_0\in \mathbb{R}^-$, utilizing the H\"{o}lder's inequality and  the estimates (\ref{3.41}) and (\ref{3.57}), we find
\begin{equation}
    \begin{split}
      &  \sup_{x\in (-\infty,x_0)}\left| \langle x\rangle^2\uppercase\expandafter{\romannumeral1}(x) \right|\leq  \| r_{1}\|_{L^{\infty}} \sup_{x\in (-\infty,x_0)} \|\langle x\rangle\nu_+^{(1)} \|_{L^2}\\
        &\qquad\times\sup_{x\in (-\infty,x_0)} \|\langle x\rangle P^+(\sigma r_{2}e^{2isx}) \|_{L^2}
        \leq   c\|r_{1} \|_{H^1}\|r_{2} \|_{H^1},
    \end{split}
\label{4.9}\end{equation}
where $c$ is a constant only depends on $\|r \|_{L^{\infty}}$,
further, we obtain
\begin{equation}
    \begin{split}
        \|\langle x\rangle I(x) \|_{L^2(\mathbb{R}^-)}\leq   c\|r_{1} \|_{H^1}\|r_{2} \|_{H^1},
    \end{split}\nonumber
\end{equation}
where $c$ is another constant only depends on $\|r \|_{L^{\infty}}$.
 Combining the results of the two terms of (\ref{4.9}) leads to
 \begin{equation}
     \|u(x)\|_{L^{2,1}( \mathbb{R}^-)}\leq c(1+\| r_{1}\|_{H^1}+\| r_{2}\|_{H^1})(\| r_{1}\|_{H^1}+\| r_{2}\|_{H^1}).
\label{111} \end{equation}
 This completes  the proof of $u\in L^{2,1}(\mathbb{R}^-)$.
 By the Fourier theory, the derivative of the second term of (\ref{4.4}) belongs to $L^2(\mathbb{R})$. For the second term $I(x)$, we differentiate $I(x)$ in $x$ and obtain
 \begin{equation}
     \begin{split}
           I'(x)=& \partial\int_{\mathbb{R}} \sigma r_{2}e^{2izx}(\mu_-^{(1)}(x;z)-1) dz\\
         =&-2i\int_{\mathbb{R}} r_{1}(z) e^{-2izx}\nu_+^{(1)}(x;z)P^+(s\sigma r_{2}e^{2isx})(z)dz\\
         &-2i\int_{\mathbb{R}} zr_{1}(z)e^{-2izx}\nu_+^{(1)}(x;z)P^+(\sigma r_{2}e^{2isx})(z)dz\\
         &-\int_{\mathbb{R}} r_{1}(z) e^{-2izx} \partial_x \nu_+^{(1)}(x;z)P^+(\sigma r_{2}e^{2isx})(z)dz,
     \end{split}\nonumber
 \end{equation}
 where we have used Eq.(\ref{89}) and  the Fubini's theorem.

 Utilizing the estimates (\ref{3.41}),  (\ref{3.57})  we find that for every $x_0\in \mathbb{R}^- $,
 \begin{equation}
     \begin{split}
       &  \sup_{x\in (-\infty,x_0)}|\langle x\rangle^{2}   I'(x)  | \leq
 \| r\|_{L^{\infty}} \sup_{x\in (x_0,+\infty)}  \left ( 2 \|\langle x\rangle\nu_+^{(1)} \|_{L^2}  \| \langle x\rangle  P^+(z\overline{r}e^{2izx})\|_{L^2}\right.\\
         &\left.+ \|\langle x\rangle\nu_+^{(1)} \|_{L^2}  \| \langle x\rangle P^+(z\sigma r_{2}e^{2izx})\|_{L^{2}} +  \|\langle x\rangle \partial_x\nu_+^{(1)} \|_{L^2} \|\langle x\rangle P^+(\sigma r_{2}e^{2izx})\|_{L^2}\right) \\
         & \leq   c\|r_{1}\|_{H^{1,1}\cap L^{2,3}}\|r_{2}\|_{H^{1,1}\cap L^{2,3}},
         \end{split}\nonumber
\end{equation}
which implies that
\begin{equation}
\begin{split}
\| \langle x\rangle  I'(x) \|_{L^2(\mathbb{R}^-)}
\leq & c\|r_{1}\|_{H^{1,1}\cap L^{2,3}}\|r_{2}\|_{H^{1,1}\cap L^{2,3}},
\label{1113}\end{split}
\end{equation}
with $c$ is another constant that depends on $\| r\|_{L^{\infty}}$ .
Subsequently, we obtain  $  I'(x)\in L^{2,1}(\mathbb{R}^-)$. We conclude that $u\in H^{1,1}(\mathbb{R}^-)$. The estimate (\ref{104}) can be obtained from
(\ref{111}) and (\ref{1113}) with another constant $c$ depending on $\| r\|_{L^{\infty}}$.

For $  I''(x)$,  we get
\begin{equation}
     \begin{split}
      &   \sup_{x\in (-\infty,x_0)}|\langle x\rangle   I''(x)  | \leq
 \| r_{1}\|_{L^{\infty}} \sup_{x\in (x_0,+\infty)}  \left (  4 \|\langle x\rangle\nu_+^{(1)} \|_{L^2}  \| z^{2}P^+(\sigma r_{2}e^{2izx})\|_{L^2}\right.\\
         &+ \|\langle x\rangle\nu_+^{(1)} \|_{L^2}  \|\langle x\rangle P^+(z^{2}\sigma r_{2}e^{2izx})\|_{L^2}
         + \|\partial^{2}_x\nu_+^{(1)} \|_{L^2}  \|\langle x\rangle P^+(\sigma r_{2}e^{2izx})\|_{L^2}\\
         &+2 \|z\nu_+^{(1)} \|_{L^2}  \|\langle x\rangle P^+(z\sigma r_{2}e^{2izx})\|_{L^{2}} + \|\langle x \rangle \partial_x\nu_+^{(1)} \|_{L^2} \|z P^+(\sigma r_{2}e^{2izx})\|_{L^2}\\
         &\left.+2 \|\partial_x\nu_+^{(1)} \|_{L^2} \|\langle x \rangle P^+(s\sigma r_{2}e^{2izx})\|_{L^{2}} \right)
         \leq   c\|r_{1}\|_{H^{1,1}\cap L^{2,3}}\|r_{2}\|_{H^{1,1}\cap L^{2,3}}.
         \end{split}\nonumber
\end{equation}
For $  I'''(x)$ similar to above analysis, we get
\begin{equation}
     \begin{split}
         &\sup_{x\in (-\infty,x_0)}|\langle x\rangle  I'''(x) | \leq \| r_{1}\|_{L^{\infty}}   \sup_{x\in (x_0,+\infty)}  \left(   8 \|\langle x\rangle\nu_+^{(1)}  \|_{L^2} \| P^+(z^{3}\sigma r_{2}e^{2izx})\|_{L^2}\right.\\
         &+ \|\langle x\rangle\nu_+^{(1)}  \|_{L^2}  \|  z^{3}P^+(\sigma r_{2}e^{2izx})\|_{L^2}+ \|\partial^{3}_x\nu_+^{(1)}  \|_{L^2} \|\langle x\rangle P^+(\sigma r_{2}e^{2izx})\|_{L^2}\\
         &+4 \|\langle x\rangle\partial_x\nu_+^{(1)} \|_{L^2} \| P^+(s^{2}\sigma r_{2}e^{2izx})\|_{L^2}+2 \|\langle x\rangle\partial^{2}_x\nu_+^{(1)} \|_{L^2}  \| P^+(z^{2}\sigma r_{2}e^{2izx})\|_{L^2}\\
         &+2 \|\langle x\rangle\nu_+^{(1)}  \|_{L^2}  \| z^{2}P^+(s\sigma r_{2}e^{2izx})\|_{L^2}+4 \|\langle x\rangle\nu_+^{(1)}  \|_{L^2}  \|z P^+(z^{2}\sigma r_{2}e^{2izx})\|_{L^2}\\
         &+ \|\partial_x\nu_+^{(1)}\|_{L^2}  \|\langle x\rangle s^{2}P^+(\sigma r_{2}e^{2izx})\|_{L^2}+ \| z\partial^{2}_x\nu_+^{(1)}\|_{L^2} \|\langle x\rangle P^+(\sigma r_{2}e^{2izx})\|_{L^2}\\
         &\left.+4 \|\langle x\rangle\nu_+^{(1)}\|_{L^2_z} \| P^+(s^{2}\sigma r_{2}e^{2izx})\|_{L^2}+2 \|z\partial_x\nu_+^{(1)}\|_{L^2}  \|\langle x\rangle P^+(z\overline{r}e^{2izx})\|_{L^2} \right)\\
          &\leq  c\|r_{1}\|_{H^{1,1}\cap L^{2,3}}\|r_{2}\|_{H^{1,1}\cap L^{2,3}}.
         \end{split}\nonumber
\end{equation}
Then we conclude that $u\in H^{3}(\mathbb{R}^-)$. Finally we  prove the conclusion.
\end{proof}

We actually get the following map through above analysis.
$$
 H^{1,1}(\mathbb{R}) \cap L^{2,3}(\mathbb{R}) \ni r_{1,2} \mapsto u \in H^{3}(\mathbb{R}^{-}) \cap H^{1,1}(\mathbb{R}^{-}) .
$$
We will prove the map is Lipschitz continuous.
\begin{lemma} Let $r\in H^{1,1}(\mathbb{R}) \cap L^{2,3}(\mathbb{R})$ , then the mapping
\begin{equation}
 H^{1,1}(\mathbb{R}) \cap L^{2,3}(\mathbb{R}) \ni r_{1,2} \mapsto u \in H^{3}(\mathbb{R}^{-}) \cap H^{1,1}(\mathbb{R}^{-})\nonumber
\end{equation}
is Lipschitz continuous.
\label{lemma15}\end{lemma}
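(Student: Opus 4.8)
The plan is to run the difference version of the reconstruction argument used in Lemma~\ref{3.6}. Fix $U>0$ and let $r_{1,2},\tilde r_{1,2}\in H^{1,1}(\mathbb{R})\cap L^{2,3}(\mathbb{R})$ satisfy $\|r_{1,2}\|,\|\tilde r_{1,2}\|\leq U$ in that norm; let $u,\tilde u$ be the corresponding potentials reconstructed through (\ref{4.4}), and let $\mu_-^{(1)},\nu_+^{(1)}$ and $\tilde\mu_-^{(1)},\tilde\nu_+^{(1)}$ be the associated RH solution columns for the scattering matrices $S$ and $\tilde S$. The goal is to dominate $\|u-\tilde u\|_{H^3(\mathbb{R}^-)\cap H^{1,1}(\mathbb{R}^-)}$ by $C(U)(\|r_1-\tilde r_1\|+\|r_2-\tilde r_2\|)$ in the relevant norms, so that the Lipschitz constant depends only on $U$, $\|r_{1,2}\|_{L^\infty}$ and $\|zr_{1,2}\|_{L^\infty}$.

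First I would establish a Lipschitz estimate for the RH solutions themselves. Using the equation $(I-\mathcal{P}_S^-)\Psi=\mathcal{P}^-S$ together with the resolvent identity
\begin{equation}
(I-\mathcal{P}_S^-)^{-1}-(I-\mathcal{P}_{\tilde S}^-)^{-1}=(I-\mathcal{P}_S^-)^{-1}(\mathcal{P}_S^--\mathcal{P}_{\tilde S}^-)(I-\mathcal{P}_{\tilde S}^-)^{-1}\nonumber
\end{equation}
and the uniform boundedness of $(I-\mathcal{P}_S^-)^{-1}$ proved earlier, I obtain
$$\|\mu_-^{(1)}-\tilde\mu_-^{(1)}\|_{L^2_z}+\|\nu_+^{(1)}-\tilde\nu_+^{(1)}\|_{L^2_z}\leq C(U)(\|r_1-\tilde r_1\|_{L^2}+\|r_2-\tilde r_2\|_{L^2}),$$
since $\mathcal{P}_S^--\mathcal{P}_{\tilde S}^-$ is controlled by $S-\tilde S$, which is linear in $r-\tilde r$. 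Differentiating (\ref{1120}) and (\ref{137}) in $x$ and running the same weighted argument as in the preceding lemma yields the matching Lipschitz bounds for $\langle x\rangle\partial_x^i\nu_+^{(1)}$ and $z^k\partial_x^i\nu_+^{(1)}$.

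Next I would insert these into the difference of the reconstruction formula. The free term $\frac{1}{\pi}\int_{\mathbb{R}}\sigma(r_2-\tilde r_2)e^{2izx}\,dz$ is handled directly by Parseval, exactly as in Lemma~\ref{3.6}. For the interaction term $I(x)-\tilde I(x)$ I would telescope
\begin{equation}
\begin{aligned}
I-\tilde I=&-\int_{\mathbb{R}}(r_1-\tilde r_1)e^{-2izx}\nu_+^{(1)}P^+(\sigma r_2 e^{2isx})\,dz\\
&-\int_{\mathbb{R}}\tilde r_1 e^{-2izx}(\nu_+^{(1)}-\tilde\nu_+^{(1)})P^+(\sigma r_2 e^{2isx})\,dz\\
&-\int_{\mathbb{R}}\tilde r_1 e^{-2izx}\tilde\nu_+^{(1)}P^+(\sigma(r_2-\tilde r_2)e^{2isx})\,dz,
\end{aligned}\nonumber
\end{equation}
and bound each line by H\"older's inequality together with the weighted estimates (\ref{3.41}), (\ref{3.57}) and the RH Lipschitz bounds from the previous step; this produces the $L^{2,1}(\mathbb{R}^-)$ estimate on $u-\tilde u$.

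Finally, for the higher derivatives I would differentiate $I-\tilde I$ up to three times in $x$, generating precisely the same collection of terms that appear in the estimates of $I',I'',I'''$ in Lemma~\ref{3.6}, but with one factor in each product replaced by a difference. The main obstacle is not analytic but organizational: controlling the proliferation of terms at the third derivative while ensuring that every Lipschitz factor is paired with a complementary factor that is uniformly bounded by $U$. Since each individual product has already been estimated in the boundedness proof, and each difference factor is linear in $r-\tilde r$, summing the contributions gives the claimed Lipschitz bound with a constant $C(U)$ depending only on $\|r_{1,2}\|_{L^\infty}$, $\|zr_{1,2}\|_{L^\infty}$ and $U$.
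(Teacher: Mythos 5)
Your proposal is correct and follows essentially the same route as the paper: the paper likewise telescopes the difference of the reconstruction formula (\ref{4.4}) into a free Fourier term, a term carrying $r_2-\tilde r_2$, and a term carrying $\mu_-^{(1)}-\tilde\mu_-^{(1)}$, and then ``repeats the analysis of Lemma \ref{3.6}'' with the weighted estimates. Your explicit resolvent-identity step for $(I-\mathcal{P}_S^-)^{-1}-(I-\mathcal{P}_{\tilde S}^-)^{-1}$ is exactly what the paper leaves implicit in that phrase, so you have simply spelled out the same argument in more detail.
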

\begin{proof}
Let $r_{1,2},\tilde{r}_{1,2} \in  H^{1,1}(\mathbb{R}) \cap L^{2,3}(\mathbb{R})$. Let the functions $u$ and $\tilde u$ are the corresponding potentials respectively.  We will show that there exists a constant $c$ that  depends on $\| r_{1,2}\|_{L^{\infty}}$ such that
\begin{equation}
    \|u-\tilde u \|_{H^{3}(\mathbb{R}^{-}) \cap H^{1,1}(\mathbb{R}^{-})} \leq c(\|r_{1}-\tilde r_{1} \|_{ H^{1,1}(\mathbb{R}) \cap L^{2,3}(\mathbb{R})}+\|r_{2}-\tilde r_{2} \|_{ H^{1,1}(\mathbb{R}) \cap L^{2,3}(\mathbb{R})}). \label{wiw}
\end{equation}
From (\ref{4.9}), we have
\begin{equation}
\begin{split}
    u-\tilde u=&\frac{1}{\pi}\int_{\mathbb{R}}(\sigma r_{2}-\sigma\tilde r_{2} )e^{2izx}d z+ \frac{1}{\pi}\int_{\mathbb{R}}(\sigma r_{2}-\sigma\tilde  r_{2})e^{2izx}  (\mu_-^{(1)}(x;z)-1) d z\\
    &+\frac{1}{\pi}\int_{R} \sigma r_{2}e^{2izx}(\mu_-^{(1)}(x;z)-\tilde\mu_-^{(1)}(x;z)) d z.
\end{split}\nonumber
\end{equation}
Repeating the analysis in the proof of Lemma \ref{3.6}, we obtain the Lipschitz continuity of $u$.
\end{proof}
\subsection{Estimates on the positive half-line}
Recalling (\ref{3.9})  again, we can get
\begin{equation}
u(-x)=2iz\sigma\lim_{z\rightarrow \infty}(M_{\pm})_{21}.
\end{equation}

Performing the same manipulation
for (\ref{4.1})
yields
\begin{equation}
    \begin{split}
        u(- x)=&-\frac{\sigma}{\pi}\int_{\mathbb{R}} r_1(z)e^{-2izx}( \nu_-^{(2)}(x;z)+\sigma  r_2(z) e^{2ikx}\mu_-^{(2)}(x;z) )dz\\
        =&-\frac{\sigma}{\pi}\int_{\mathbb{R}}r_1(z)e^{-2izx}\nu_+^{(2)}(x;z)dz,
    \end{split}\label{134}
\end{equation}
where we have used the identity $\nu_+^{(2)}=\sigma r_2(z)e^{2izx}\mu_-^{(2)}+\nu_-^{(2)}$.

Similar to condition $x_{0}\in \mathbb{R}^{-}$, we summarize the above analysis as the following lemma:
\begin{lemma}\label{4.8}
Let $r_{1,2}(z)\in H^{3}(\mathbb{R})\cap H^{1,1}(\mathbb{R})$ satisfy $|r_{1,2}(z)|<1$, then
$u\in H^{1,1}(\mathbb{R}^{+})\cap L^{2,3}(\mathbb{R}^{+})$. Moreover, we have
\begin{equation}
    \| u\|_{ H^{1,1}(\mathbb{R}^{+})\cap L^{2,3}(\mathbb{R}^{+})}\leq c(  \| r_1\|_{H^{3}(\mathbb{R})\cap H^{1,1}(\mathbb{R})}+\|r_2\|_{H^{3}(\mathbb{R})\cap H^{1,1}(\mathbb{R})}    ),
\end{equation}
where c is a constant that depends on $\| r_{1,2}\|_{L^{\infty}}$ and $\|zr_{1,2}\|_{L^{\infty}}$.
\end{lemma}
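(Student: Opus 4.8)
The plan is to prove Lemma \ref{4.8} as the reflection-symmetric counterpart of Lemma \ref{3.6}, starting from the second reconstruction formula (\ref{134}), namely $u(-x)=-\frac{\sigma}{\pi}\int_{\mathbb{R}}r_1(z)e^{-2izx}\nu_+^{(2)}(x;z)\,dz$. The key observation is that evaluating this identity at $x\in\mathbb{R}^-$ returns the values of $u$ at $-x\in\mathbb{R}^+$, so that the estimates of Section \ref{sec3}, which are all valid for $x\in(-\infty,x_0)$ with $x_0\in\mathbb{R}^-$, apply verbatim. I split $\nu_+^{(2)}=1+(\nu_+^{(2)}-1)$, obtaining a leading Fourier term $-\frac{\sigma}{\pi}\int_{\mathbb{R}}r_1e^{-2izx}\,dz$ and a remainder $J(x):=-\frac{\sigma}{\pi}\int_{\mathbb{R}}r_1e^{-2izx}(\nu_+^{(2)}-1)\,dz$.

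First I would dispatch the leading term by Fourier analysis. After the reflection $y=-x>0$ it equals, up to a constant, the Fourier transform of $r_1$; by Parseval's identity together with the standard duality between spatial weight and smoothness one gets that $r_1\in H^3(\mathbb{R})$ forces the leading term into $L^{2,3}(\mathbb{R}^+)$ (the three $z$-derivatives carried by $r_1$ generate the weight $\langle x\rangle^3$), while $r_1\in H^{1,1}(\mathbb{R})$ places it in $H^{1,1}(\mathbb{R}^+)$, with both norms controlled by $\|r_1\|_{H^3\cap H^{1,1}}$. This Fourier exchange $H^3\leftrightarrow L^{2,3}$ is exactly why the target space differs from the $H^3(\mathbb{R}^-)$ produced on the negative half-line in Lemma \ref{3.6}.

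Next I would treat the remainder. Inserting the Plemelj representation $\nu_+^{(2)}-1=\mathcal{P}^+(\sigma r_2e^{2izx}\mu_-^{(2)})$ read off from (\ref{91}) and applying Fubini's theorem exactly as in the analysis of the remainder term in Lemma \ref{3.6}, $J(x)$ reduces to an integral of $\mu_-^{(2)}$ (weighted by $r_2\in L^\infty$) against the projection $\mathcal{P}^-(r_1e^{-2izx})$. Using H\"older's inequality with the weighted bound (\ref{3.56}) and its $x$-derivative analogue (\ref{172}) for $\mu_-^{(2)}$, and the projection bounds (\ref{3.42}), (\ref{163}), (\ref{164}), one shows that the weighted quantities demanded by $H^{1,1}(\mathbb{R}^+)\cap L^{2,3}(\mathbb{R}^+)$ lie in $L^2(\mathbb{R}^+)$; here the spatial weights are produced by transferring powers of $x$ onto $z$-derivatives and onto the $z$-power estimates (\ref{178}) for $\mu_-^{(2)}$ (which consume the $H^3$-regularity of $r_1$), rather than by differentiating in $x$ as on the negative half-line. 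Summing the two contributions yields $u\in H^{1,1}(\mathbb{R}^+)\cap L^{2,3}(\mathbb{R}^+)$ with the asserted estimate, the hypothesis $|r_{1,2}|<1$ being used only to guarantee the solvability estimates of Section \ref{sec3} underlying these bounds.

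I expect the main obstacle to be not conceptual but the same lengthy bookkeeping encountered in Lemma \ref{3.6}: for every weight/derivative combination dictated by $L^{2,3}\cap H^{1,1}$ one must verify that each resulting product of a (derivative of a) Jost column with a projection $\mathcal{P}^\pm(z^kr_{1,2}e^{\mp2izx})$ closes in the correct weighted $L^2_z$ norm. The one genuinely new feature relative to Lemma \ref{3.6} is the systematic swap of roles forced by the reflection symmetry, namely $r_2\leftrightarrow r_1$, $\mu_-^{(1)}\leftrightarrow\nu_+^{(2)}$ and $\mathcal{P}^+\leftrightarrow\mathcal{P}^-$, together with the attendant Fourier exchange $H^3\leftrightarrow L^{2,3}$, which must be tracked with care so that the $z$-regularity of $r_1$ feeds the spatial weights of $L^{2,3}(\mathbb{R}^+)$ correctly.
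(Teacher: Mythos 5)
Your proposal is correct and follows essentially the same route as the paper: the paper likewise starts from the reconstruction formula (\ref{134}), splits $\nu_+^{(2)}=1+(\nu_+^{(2)}-1)$, handles the leading term by the Fourier weight/derivative exchange, and disposes of the remainder by ``repeating the analysis in the proof of Lemma \ref{3.6}'' --- precisely the Plemelj--Fubini--H\"older argument with the bounds (\ref{3.56}), (\ref{172}), (\ref{3.42}), (\ref{178}) that you spell out. Your write-up is in fact more explicit than the paper's about the role swap $r_1\leftrightarrow r_2$, $\mu_-^{(1)}\leftrightarrow\nu_+^{(2)}$, $\mathcal{P}^+\leftrightarrow\mathcal{P}^-$ and the $H^3\leftrightarrow L^{2,3}$ exchange, but the underlying argument is the same.
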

\begin{proof}
We rewrite (\ref{134})  as
\begin{equation}
    u(-x)=-\frac{\sigma}{\pi}\int_{\mathbb{R}} r_1(z)e^{-2izx}  dz-\frac{\sigma}{\pi}\int_{\mathbb{R}} r_1(z)e^{-2izx} (\nu_+^{(2)}(x;z)-1) dz.
\end{equation}
Let
\begin{equation}
    \hat r_1(-x)=\int_{\mathbb{R}} r_1(z)e^{-2iz(-x)}dz.
\end{equation}
According to the Fourier theory, we have
$-x\hat r_1(-x)=\widehat{\partial_zr_1(z)}(x)$
and
$\|x\hat r_1(-x)\|_{L^2(\mathbb{R})}=\| \partial_z r_1(z)\|_{L^2(\mathbb{R})}$.
Let
$$\uppercase\expandafter{\romannumeral1}_2(x)=-\frac{\sigma}{\pi}\int_{\mathbb{R}}r_1(z)e^{-2izx}(\nu_+^{(2)}(x;z)-1)dz.$$
Repeating the analysis in the proof of Lemma \ref{3.6}, we obtain $u\in H^{1,1}(\mathbb{R}^+)\cap L^{2,3}(\mathbb{R}^+)$ and
\begin{equation}
    \| u\|_{H^{1,1}(\mathbb{R}^+)\cap L^{2,3}(\mathbb{R}^+}\leq c(  \| r_1\|_{H^{3}(\mathbb{R})\cap H^{1,1}(\mathbb{R})}+\|r_2\|_{H^{3}(\mathbb{R})\cap H^{1,1}(\mathbb{R})}   ),
\end{equation}
where c is a constant that depends on $\| r_{1,2}\|_{L^{\infty}}$ and $\|zr_{1,2}\|_{L^{\infty}}$.
\end{proof}
\begin{lemma} Let $r_{1,2}(z)\in H^{1,1}(\mathbb{R}) \cap L^{2,3}(\mathbb{R})  $ , then the following map
\begin{equation}
H^{1,1}(\mathbb{R}) \cap L^{2,3}(\mathbb{R}) \ni r_{1,2} \mapsto u \in H^{3}(\mathbb{R}^{+}) \cap H^{1,1}(\mathbb{R}^{+})\nonumber
\end{equation}
is Lipschitz continuous.
\label{lemma16}\end{lemma}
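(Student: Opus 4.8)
The plan is to mirror the negative-half-line argument of Lemma~\ref{lemma15}, using the positive-half-line reconstruction formula (\ref{134}) in place of (\ref{4.4}). Fix $r_{1,2},\tilde r_{1,2}\in H^{1,1}(\mathbb{R})\cap L^{2,3}(\mathbb{R})$ with norms bounded by a common constant $U$, and let $u,\tilde u$ be the associated potentials and $\nu_+^{(2)},\tilde\nu_+^{(2)}$ the corresponding second components solving (\ref{1120}). Subtracting the two instances of (\ref{134}), I would decompose
\begin{equation}
u(-x)-\tilde u(-x)=-\frac{\sigma}{\pi}\int_{\mathbb{R}}(r_1-\tilde r_1)e^{-2izx}dz-\frac{\sigma}{\pi}\int_{\mathbb{R}}(r_1-\tilde r_1)e^{-2izx}(\nu_+^{(2)}-1)dz-\frac{\sigma}{\pi}\int_{\mathbb{R}}\tilde r_1 e^{-2izx}(\nu_+^{(2)}-\tilde\nu_+^{(2)})dz.\nonumber
\end{equation}
The goal is to bound each of the three terms, together with its $x$-derivatives up to order three and with the weight $\langle x\rangle$, in $L^2(\mathbb{R}^+)$ by $\|r_{1,2}-\tilde r_{1,2}\|_{H^{1,1}\cap L^{2,3}}$.

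For the first term I would invoke Parseval/Plancherel exactly as in Lemma~\ref{4.8}: it is the Fourier transform of $r_1-\tilde r_1$, so its $H^3(\mathbb{R}^+)\cap H^{1,1}(\mathbb{R}^+)$ norm is controlled by $\|r_1-\tilde r_1\|_{H^{1,1}\cap L^{2,3}}$ through the identity $\widehat{\partial_z r_1}=x\hat r_1$ and the correspondence between weights and derivatives. The second term is handled by H\"older's inequality together with the weighted bounds on $\nu_+^{(2)}-1$ and on the Plemelj projections $\mathcal{P}^{\pm}(z^k r_{1,2}e^{\mp 2izx})$ established in the proof of Lemma~\ref{4.8} (the $\nu_+^{(2)}$-analogues of (\ref{172})--(\ref{179})); here the factor $r_1-\tilde r_1$ supplies the Lipschitz gain while the Jost-type factor remains uniformly bounded in $U$.

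The hard part is the third term, which requires Lipschitz control of the increment $\nu_+^{(2)}-\tilde\nu_+^{(2)}$ (and of its $x$-derivatives up to order three) in the weighted $L^2_z$ norms. To obtain it I would pass to the equivalent equation (\ref{1125}) for $G=N(I-S_+)$ and subtract the two copies; a short computation, using $GS-\tilde G\tilde S=(G-\tilde G)S+\tilde G(S-\tilde S)$, gives
\begin{equation}
(I-\mathcal{P}_S^-)(G-\tilde G)=(F-\tilde F)+\mathcal{P}^-\big(\tilde G(S-\tilde S)\big),\nonumber
\end{equation}
so that $G-\tilde G=(I-\mathcal{P}_S^-)^{-1}\big[(F-\tilde F)+\mathcal{P}^-(\tilde G(S-\tilde S))\big]$. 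The uniform resolvent bound $\|(I-\mathcal{P}_S^-)^{-1}f\|_{L^2_z}\leq c\|f\|_{L^2_z}$ proved earlier, the already-established bounds on $\tilde G$, and the elementary Lipschitz estimates $\|S-\tilde S\|_{L^2_z}\leq c\|r_{1,2}-\tilde r_{1,2}\|$ and $\|F-\tilde F\|_{L^2_z}\leq c\|r_{1,2}-\tilde r_{1,2}\|$ (both immediate from $\|\mathcal{P}^{\pm}\|_{L^2\to L^2}=1$ and the definitions of $S$ and $F$) then deliver the increment bound for $G-\tilde G$, hence for $\nu_+^{(2)}-\tilde\nu_+^{(2)}$. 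Differentiating (\ref{1120}) $j$ times for $j=1,2,3$ and repeating this resolvent argument with source terms built from (\ref{163})--(\ref{164}), exactly as in the derivation of (\ref{178})--(\ref{179}), propagates the estimate to $\partial_x^{\,j}(\nu_+^{(2)}-\tilde\nu_+^{(2)})$. Feeding these increment bounds into the third term and combining the three contributions yields the inequality (\ref{wiw}) with $\mathbb{R}^-$ replaced by $\mathbb{R}^+$, which is precisely the asserted Lipschitz continuity.
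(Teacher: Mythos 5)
Your proposal is correct and takes essentially the same route as the paper: the paper states this lemma with no separate proof, relying on the mirror image of Lemma \ref{lemma15}'s argument (three-term decomposition of the difference of the reconstruction formula (\ref{134}), then ``repeating the analysis'' of the half-line estimates), which is exactly what you do. Your explicit resolvent identity $(I-\mathcal{P}_S^-)(G-\tilde G)=(F-\tilde F)+\mathcal{P}^-\bigl(\tilde G(S-\tilde S)\bigr)$ together with the uniform bound on $(I-\mathcal{P}_S^-)^{-1}$ supplies the increment estimate for $\nu_+^{(2)}-\tilde\nu_+^{(2)}$ that the paper leaves implicit, so your write-up is, if anything, more complete than the source.
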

 Summarize the results from Lemma \ref{lemma15} to Lemma \ref{lemma16}, we have the following proposition:

\begin{proposition} \label{prop4.1}
Let $r_{1,2}(z)\in H^{1,1}(\mathbb{R}) \cap L^{2,3}(\mathbb{R})$, then we have
$u \in H^{3}(\mathbb{R}) \cap H^{1,1}(\mathbb{R}) $ and
\begin{equation}
    \| u\|_{H^{3}(\mathbb{R}) \cap H^{1,1}(\mathbb{R}) }\leq c  (\| r_{1}\|_{H^{1,1}(\mathbb{R}) \cap L^{2,3}(\mathbb{R})}+\| r_{2}\|_{H^{1,1}(\mathbb{R}) \cap L^{2,3}(\mathbb{R})}).\nonumber
\end{equation}
Moreover, the mapping
\begin{equation}
H^{1,1}(\mathbb{R}) \cap L^{2,3}(\mathbb{R}) \ni r_{1,2} \mapsto u \in H^{3}(\mathbb{R}) \cap H^{1,1}(\mathbb{R})\nonumber
\end{equation}
is Lipschitz continuous.
\end{proposition}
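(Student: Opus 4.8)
The plan is to treat this proposition as the synthesis of the two one-sided analyses already carried out: the negative half-line estimates in Lemma~\ref{3.6} and Lemma~\ref{lemma15}, and the positive half-line estimates in Lemma~\ref{4.8} and Lemma~\ref{lemma16}. The reconstruction formula \eqref{3.9} produces $u$ as a \emph{single} function defined on all of $\mathbb{R}$ (the RH solution $M(x;z)$ being smooth in $x$), so the remaining task is purely to combine the one-sided bounds into a global one, with care taken at the origin.

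First I would recall how the two reconstruction formulas in \eqref{3.9} together cover the whole line. The first formula $u(x)=2\mathrm{i}\lim_{z\to\infty} z M_{12}(x,z)$, estimated for $x\in\mathbb{R}^-$ as in Lemma~\ref{3.6}, controls $u$ on the negative half-line and yields $u\in H^3(\mathbb{R}^-)\cap H^{1,1}(\mathbb{R}^-)$ with the quantitative bound \eqref{104}. The second formula $u(-x)=2\mathrm{i}\sigma\lim_{z\to\infty} z M_{21}(x,z)$, again evaluated for $x\in\mathbb{R}^-$ so that $-x\in\mathbb{R}^+$, controls $u$ on the positive half-line; repeating verbatim the argument of Lemma~\ref{3.6} on the representation \eqref{134} (whose leading term is now the Fourier transform of $r_1$, so that $r_1\in L^{2,3}(\mathbb{R})$ produces three $x$-derivatives in $L^2$) upgrades the conclusion of Lemma~\ref{4.8} to $u\in H^3(\mathbb{R}^+)\cap H^{1,1}(\mathbb{R}^+)$. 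Thus both half-lines carry $H^3\cap H^{1,1}$ bounds controlled by $\|r_{1,2}\|_{H^{1,1}\cap L^{2,3}}$.

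Next I would glue. For a globally defined function $\|u\|_{L^2(\mathbb{R})}^2=\|u\|_{L^2(\mathbb{R}^-)}^2+\|u\|_{L^2(\mathbb{R}^+)}^2$, and the analogous additivity holds for each $\partial_x^j u$ and each weighted norm, \emph{provided} the weak derivatives of $u$ across $x=0$ are consistent, i.e.\ no spurious jump contribution appears in the distributional derivative at the origin. This consistency is exactly what the smoothness of the reconstruction formula guarantees: $u$ and its first three derivatives are continuous across $0$, so the one-sided weak derivatives agree there and the half-line Sobolev memberships combine into $u\in H^3(\mathbb{R})\cap H^{1,1}(\mathbb{R})$, with
$$\|u\|_{H^3(\mathbb{R})\cap H^{1,1}(\mathbb{R})}^2\le \|u\|_{H^3(\mathbb{R}^-)\cap H^{1,1}(\mathbb{R}^-)}^2+\|u\|_{H^3(\mathbb{R}^+)\cap H^{1,1}(\mathbb{R}^+)}^2,$$
which together with \eqref{104} and its positive half-line analogue yields the asserted estimate.

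Finally, for Lipschitz continuity I would apply the half-line Lipschitz bounds of Lemma~\ref{lemma15} and Lemma~\ref{lemma16} to the difference $u-\tilde u$ on each half-line and add them, using the same gluing as above; since the constants in those lemmas depend only on $\|r_{1,2}\|_{L^\infty}$ and $\|z r_{1,2}\|_{L^\infty}$, they are uniform over $r$ in a bounded ball of $H^{1,1}\cap L^{2,3}$. The main obstacle I anticipate is not the individual estimates---those are done in the lemmas---but justifying the gluing rigorously: one must verify that the continuity of $u$ and its first three derivatives at $x=0$ really rules out a boundary term, and that the positive half-line genuinely delivers $H^3$ regularity rather than only the weaker $L^{2,3}$ decay literally stated in Lemma~\ref{4.8}. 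Both points rest on the smoothness and the precise large-$z$ behaviour of the RH solution, and I would make them explicit before summing the one-sided estimates.
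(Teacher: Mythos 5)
Your proposal matches the paper's approach exactly: the paper offers no separate proof of Proposition \ref{prop4.1} at all, merely stating that it ``summarizes'' the half-line results of Lemmas \ref{3.6}, \ref{lemma15}, \ref{4.8} and \ref{lemma16}, which is precisely the synthesis you carry out, with the gluing of the two half-line norms made explicit. Your two flagged concerns---justifying that no boundary term appears at $x=0$ when combining the half-line Sobolev memberships, and the fact that Lemma \ref{4.8} as literally stated delivers only $u\in H^{1,1}(\mathbb{R}^{+})\cap L^{2,3}(\mathbb{R}^{+})$ rather than the $H^{3}(\mathbb{R}^{+})\cap H^{1,1}(\mathbb{R}^{+})$ needed here---are genuine gaps the paper leaves silent, so your version is, if anything, more complete than the original.
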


\section{ Global existence and Lipschitz continuity }  \label{sec5}

\subsection{Time evolution of scattering data }
 From Section 2  to  Section 4,   for the  initial data   $u(x,0)\in H^3(\mathbb{R}) \cap H^{1,1}(\mathbb{R})$,  we only  consider the spatial spectral problem
 (\ref{lax1}) and
 obtain its unique normalized  solution
\begin{align}
  & m_{1}^{\pm}(x,0; z)\rightarrow e_1,  \  \   m_{1}^{\pm}(x,0; z)\rightarrow e_2,   \ \ x \rightarrow  \pm\infty,  \label{pw1}
\end{align}
which cannot satisfy the time spectral problem  (\ref{lax2})  since they  are short of a function about the  time $t$.
For  every $t\in [0,T]$, we define  the normalized Jost functions  of the Lax pair (\ref{lax1}) and (\ref{lax2}):
\begin{align}
&m_{1}^{\pm}(x,t; z)=m_{1}^{\pm}(x,0; z)e^{4iz^3t}, \label{time1}\\
 &m_{2}^{\pm}(x,t; z)=m_{2}^{\pm}(x,0; z)e^{-4iz^3t},\label{time2}
\end{align}
with the potential $u(\cdot,0)\in H^3(\mathbb{R}) \cap H^{1,1}(\mathbb{R}) $.
It follows that for every $t\in[0,T]$, we have
$$m_{1}^{\pm}(x,t; z)\rightarrow e^{4iz^3t}e_1 \quad x\rightarrow\pm\infty,$$
$$m_{2}^{\pm}(x,t; z)\rightarrow e^{-4iz^3t}e_2 \quad x\rightarrow\pm\infty.$$
Repeating the analysis as  the proof of Lemma\ref{lemma1}, we prove that there exist unique solutions of the Volttera's integral equations for  Jost functions $m_{1}^{\pm}(x,t; z)$ and $m_{2}^{\pm}(x,t; z)$, and the  Jost functions $m_{1}^{\pm}(x,t; z)$ and $m_{2}^{\pm}(x,t; z)$ admit the same
 analytic property as $m_{1}^{\pm}(x,0; z)$ and $m_{2}^{\pm}(x,0; z)$.
 As well,   for every $(x,t)\in \mathbb{R}\times \mathbb{R}^+$ and every $z\in \mathbb{R}$,  the Jost functions $m_{1}^{\pm}(x,t; z)$ and $ m_{2}^{\pm}(x,t; z)$  are supposed to satisfy  the  scattering  relation
\begin{equation}
\begin{split}
    &m_{1}^{+}(x,t; z)=a(t;z)m_{1}^{-}(t,x; z)+b(t;z)e^{-2izx }m_{2}^{-}(t,x; z),\\
    &m_{2}^{+}(x,t; z)=c(t;z)e^{2izx }m_{1}^{-}(t,x; z+d(t;z)m_{2}^{-}(t,x; z).
\end{split}
\nonumber\end{equation}
By the Crammer's law and  the evolution relation  (\ref{time1})-(\ref{time2}),  we obtain the evolution   of   the scattering coefficients
\begin{equation}
\begin{split}
&a(t;z)=W(m_{1}^{+}(0,0; z)e^{4iz^3t},m_{2}^{-}(0,0; z)e^{-4iz^3t})=a(0;z),\\
&d(t;z)=W(m_{1}^{-}(0,0; z)e^{4iz^3t},m_{2}^{+}(0,0; z)e^{-4iz^3t})=d(0;z),\\
&b(t;z)=W(m_{1}^{-}(0,0; z)e^{-4iz^3t},m_{1}^{+}(0,0; z)e^{-4iz^3t}) = b(0;z) e^{8iz^3t}.
\end{split}\nonumber
\end{equation}

Direct calculation shows that  the reflection coefficients are given by
\begin{align}
&r_{1}(t;z)=\frac{b(t;z) }{a(t;z)}=\frac{b(0;z)}{a(0;z)}e^{8iz^3t}=r_{1}(0;z)e^{8iz^3t},\\
&r_{2}(t;z)=\frac{\bar b(t;-z) }{d(t;z)}=\frac{\bar b(0;-z)}{d(0;z)}e^{8iz^3t}=r_{2}(0;z)e^{-8iz^3t},
\label{eeew}
\end{align}
where $r_{1,2}(0;z)$ are initial reflection data founded from the initial data $u(x,0)$.
\begin{proposition}
If $r_{1,2}(0;z)\in  H^{1,1}(\mathbb{R}) \cap L^{2,3}(\mathbb{R})$, then for any  fixed $T>0$ and   $t\in[0,T]$ or $t\in[-T,0]$, we have $r(t;z)\in H^{1,1}(\mathbb{R}) \cap L^{2,3}(\mathbb{R}).$
\label{prop5.1}\end{proposition}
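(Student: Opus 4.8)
The plan is to exploit the explicit time-evolution formulas $r_1(t;z)=r_1(0;z)e^{8iz^3t}$ and $r_2(t;z)=r_2(0;z)e^{-8iz^3t}$ derived above, and simply to track how the phase factor $e^{\pm 8iz^3t}$ acts on each of the norms defining $H^{1,1}(\mathbb{R})\cap L^{2,3}(\mathbb{R})$. The decisive observation is that for $z\in\mathbb{R}$ the phase has unit modulus, $|e^{\pm 8iz^3t}|=1$, so every norm that involves only $|r_{1,2}|$ (and not its $z$-derivative) is left exactly invariant. In particular,
\begin{equation}
\|r_{1,2}(t;\cdot)\|_{L^{2,3}}=\|r_{1,2}(0;\cdot)\|_{L^{2,3}},\qquad \|r_{1,2}(t;\cdot)\|_{L^{2,1}}=\|r_{1,2}(0;\cdot)\|_{L^{2,1}},\nonumber
\end{equation}
uniformly in $t$, which already disposes of the $L^{2,3}$ membership and of the weighted-$L^2$ part of the $H^{1,1}$ norm.

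The only quantity that is genuinely changed is the $z$-derivative. Differentiating, say, $r_1(t;z)$ and using the product rule gives
\begin{equation}
\partial_z r_1(t;z)=e^{8iz^3t}\bigl(\partial_z r_1(0;z)+24\,i\,t\,z^2\,r_1(0;z)\bigr),\nonumber
\end{equation}
and analogously for $r_2$ with $t$ replaced by $-t$. To conclude $r_1(t;\cdot)\in H^{1,1}$ I would bound $\|\langle z\rangle\,\partial_z r_1(t;\cdot)\|_{L^2}$ by the triangle inequality: the first term is controlled by $\|\langle z\rangle\,\partial_z r_1(0;\cdot)\|_{L^2}\le\|r_1(0;\cdot)\|_{H^{1,1}}$, while for the second term I use $|t|\le T$ together with the elementary bound $\langle z\rangle z^2\le \langle z\rangle^3$ to obtain
\begin{equation}
24\,|t|\,\|\langle z\rangle z^2 r_1(0;\cdot)\|_{L^2}\le 24\,T\,\|\langle z\rangle^3 r_1(0;\cdot)\|_{L^2}=24\,T\,\|r_1(0;\cdot)\|_{L^{2,3}}.\nonumber
\end{equation}
Adding the two contributions yields a bound of the form $\|r_1(t;\cdot)\|_{H^{1,1}\cap L^{2,3}}\le C(1+T)\,\|r_1(0;\cdot)\|_{H^{1,1}\cap L^{2,3}}$, uniform over $t\in[0,T]$; the case $t\in[-T,0]$ and the coefficient $r_2$ follow verbatim.

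There is no real obstacle here; the one point worth emphasizing is the exact matching between the cubic phase and the weight exponent. Differentiating the phase $e^{8iz^3t}$ exactly once produces the factor $z^2$, and since the $H^{1,1}$ derivative must still be measured with one power of $\langle z\rangle$, the worst term behaves like $\langle z\rangle^3 r_1(0)$; this is precisely why the space $L^{2,3}(\mathbb{R})$ (rather than a lower weight) must be included in the hypothesis and in the target space of the bijection \eqref{1.3}. Had the phase been only quadratic one weight fewer would suffice, and because $H^{1,1}$ involves no second derivative of $r$ we never need to differentiate the phase twice, so no higher weight than $L^{2,3}$ is required.
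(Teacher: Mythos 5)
Your proposal is correct and follows essentially the same route as the paper: both use the explicit evolution $r_{1,2}(t;z)=r_{1,2}(0;z)e^{\pm 8iz^3t}$, observe that the unimodular phase leaves the weighted $L^2$ norms ($L^{2,3}$ and the non-derivative part of $H^{1,1}$) invariant, and control the extra term $24itz^{2}r_{1,2}(0;z)$ produced by differentiating the phase via the triangle inequality and the bound $\|z^{3}r_{1,2}(0;\cdot)\|_{L^2}\le\|r_{1,2}(0;\cdot)\|_{L^{2,3}}$, exactly as in the paper's estimate (\ref{6.3}). The only cosmetic difference is your use of the weight $\langle z\rangle$ where the paper works with the weight $z$ directly.
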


\begin{proof}
By (\ref{eeew}), we obtain
\begin{equation}
\| r_{1}(t;\cdot) \|_{L^{2,3}(\mathbb{R})}=\| r_{1}(0;\cdot) \|_{L^{2,3}(\mathbb{R})}.\nonumber
\end{equation}

For every $t\in[0,T]$, we have
\begin{equation}
\begin{split}
 \| z\partial_zr_{1}(t;\cdot) \|_{L^{2}(\mathbb{R})}=&\|z\partial_z r_{1}(0;\cdot)+24iz^{3}tr_1(t;z) \|_{L^2(\mathbb{R})}\\
\leq & \|z\partial_z r_{1}(0;\cdot) \|_{L^2(\mathbb{R})}+24T\|r_{1}(0;\cdot) \|_{L^{2,3}(\mathbb{R})}.
\end{split}\label{6.3}
\end{equation}
Therefore, we infer  that $r_{1}(t;\cdot)\in H^{1,1}(\mathbb{R}) \cap L^{2,3}(\mathbb{R})$ for every $t\in[0,T]$ as $r_{1}(0;\cdot)\in H^{1,1}(\mathbb{R}) \cap L^{2,3}(\mathbb{R}).$ We can get similar conclusion for $t\in[-T,0]$ and $r_{2}$.
\end{proof}
Using the time-dependent data $r_{1,2}(t;z)$ we can construct a time-dependent RH-problem
\begin{problem}\label{problem2}
 Find a   matrix function  $M(x,t; z)$ satisfying

(i)  $M(x ,t; z) \rightarrow I+\mathcal{O}\left(z^{-1}\right)$ as $z \rightarrow \infty$.

(ii)   For   $  M(x ,t; z)$ admits the following jump condition
\begin{equation}
\begin{aligned}
M_{+}(x,t; z) &   =M_{-}(x,t; z) V_{x,t}(z), \label{prrwe}
\end{aligned}
\end{equation}
where
\begin{equation}
V_{x,t}(z):=\left(\begin{array}{cc}
1+\sigma r_{1}r_{2} & \sigma r_{2} e^{2 \mathrm{i} \theta(x,t;z)}\\
r_{1}e^{-2 \mathrm{i} \theta(x,t;z)} & 1
\end{array}\right),z\in \mathbb{R}.
\end{equation}
Where $\theta(x,t;z)=zx+4z^{3}t$.
\end{problem}

\begin{theorem}
Assume that
$$
M(x,t; z)=\begin{cases}
m_{+}(x;z,t)& \operatorname{Im} z>0\\
m_{-}(x;z,t)&\operatorname{Im} z<0.
\end{cases}
$$
is the solution of Problem\ref{problem2}.
 Then $M(x,t;z)$ satisfies the following system od linear differential equations:
\begin{equation}
\begin{aligned}
&M_{x}(x,t;z)=iz[\sigma_{3},M]+QM,\\
&M_{t}(x,t;z)=4iz^{3}[\sigma_{3},M]+(4z^{2}Q-2iz(Q_{x}-Q^{2})\sigma_{3}+2Q^3-Q_{xx} )M,
\end{aligned}
\label{107}\end{equation}
where
\begin{equation}
Q(x,t)=\begin{bmatrix}
0&u(x,t)\\
-u(-x,-t)&0
\end{bmatrix},
\end{equation}
and
\begin{equation}
u(x,t)=2iz\lim_{z\rightarrow \infty}M_{12}(x,t).
\label{113}\end{equation}
\end{theorem}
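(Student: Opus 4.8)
The plan is to promote the reconstructed matrix $M(x,t;z)$ to a genuine simultaneous eigenfunction of the Lax pair (\ref{lax1})--(\ref{lax2}) by stripping the oscillatory phase out of the jump. Since $\theta(x,t;z)=zx+4z^{3}t$ obeys $\partial_{x}\theta=z$ and $\partial_{t}\theta=4z^{3}$, and since the jump matrix factors as
\begin{equation}
V_{x,t}(z)=e^{i\theta(x,t;z)\sigma_{3}}\,V_{0}(z)\,e^{-i\theta(x,t;z)\sigma_{3}},\qquad
V_{0}(z)=\begin{pmatrix}1+\sigma r_{1}r_{2}&\sigma r_{2}\\ r_{1}&1\end{pmatrix},\nonumber
\end{equation}
I first set $\widetilde M(x,t;z):=M(x,t;z)e^{i\theta(x,t;z)\sigma_{3}}$. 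Substituting into the jump condition of Problem \ref{problem2} gives $\widetilde M_{+}=\widetilde M_{-}V_{0}$, so $\widetilde M$ solves a Riemann--Hilbert problem whose jump is independent of $x$ and $t$. Before differentiating I would record that $\det M\equiv1$ (since $\det V_{x,t}\equiv1$ and $M\to I$) so that $M$ is invertible, and that $M$ depends smoothly on $(x,t)$, which follows from the unique solvability of Problem \ref{problem2} and the analytic dependence of the phase on $(x,t)$; this legitimises the logarithmic derivatives below.

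For the spatial equation I consider $\Lambda_{x}:=(\partial_{x}\widetilde M)\,\widetilde M^{-1}$. Because $V_{0}$ is $x$-independent, $(\partial_{x}\widetilde M_{+})\widetilde M_{+}^{-1}=(\partial_{x}\widetilde M_{-})\widetilde M_{-}^{-1}$, so $\Lambda_{x}$ has no jump across $\mathbb{R}$ and extends to an entire function of $z$. Unwinding the phase gives $\Lambda_{x}=(\partial_{x}M)M^{-1}+iz\,M\sigma_{3}M^{-1}$, and inserting the large-$z$ expansion $M=I+M_{1}z^{-1}+\mathcal{O}(z^{-2})$ yields $\Lambda_{x}=iz\sigma_{3}+i[M_{1},\sigma_{3}]+\mathcal{O}(z^{-1})$. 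Being entire of at most linear growth, $\Lambda_{x}$ equals $iz\sigma_{3}+i[M_{1},\sigma_{3}]$ by Liouville's theorem; multiplying on the right by $M$ rewrites this as $\partial_{x}M=iz[\sigma_{3},M]+QM$ with $Q=i[M_{1},\sigma_{3}]$, whose off-diagonal entries are matched to $u(x,t)$ and $-u(-x,-t)$ through the reconstruction formula (\ref{113}) and the symmetry (\ref{symmetry}).

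For the time equation the same mechanism applies to $\Lambda_{t}:=(\partial_{t}\widetilde M)\,\widetilde M^{-1}=(\partial_{t}M)M^{-1}+4iz^{3}M\sigma_{3}M^{-1}$, which is again entire but now of cubic growth. Expanding $M=I+M_{1}z^{-1}+M_{2}z^{-2}+M_{3}z^{-3}+\mathcal{O}(z^{-4})$ and applying Liouville shows that $\Lambda_{t}$ is a polynomial of degree three in $z$ whose coefficients are explicit in $M_{1},M_{2},M_{3}$. To cast these in the claimed form I would substitute the same asymptotic series into the already-established spatial equation $\partial_{x}M=iz[\sigma_{3},M]+QM$ and match powers of $z$; the resulting recursion expresses the diagonal and off-diagonal parts of $M_{2}$ and $M_{3}$ in terms of $Q$, $Q_{x}$, $Q_{xx}$ and $Q^{2}$, after which $\Lambda_{t}$ collapses to $4z^{2}Q-2iz(Q_{x}-Q^{2})\sigma_{3}+2Q^{3}-Q_{xx}$, i.e. exactly (\ref{lax2}) written for $M$.

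The routine parts are the phase factorisation and the two Liouville arguments; the genuine work is twofold. First, one must rigorously justify the $x$- and $t$-differentiability of $M$ and the cancellation of the jump, which rests on the smooth parameter-dependence and unique solvability of Problem \ref{problem2} rather than on any new estimate. Second, and this is the main obstacle, one must carry out the algebraic reduction of the cubic polynomial $\Lambda_{t}$ to the prescribed form: this requires tracking the recursively determined coefficients $M_{1},M_{2},M_{3}$ through the spatial recursion and verifying the cancellations among the diagonal and off-diagonal blocks that produce precisely the $Q^{2}$, $Q_{x}$ and $Q_{xx}$ terms. The compatibility of the two resulting flows then returns the nonlocal mKdV equation, confirming that the reconstruction is consistent.
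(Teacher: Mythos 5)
Your argument is correct in substance, but it is not the paper's argument; the two implement the same ``dressing'' philosophy through different mechanisms. The paper never forms $M^{-1}$: it defines $LM:=M_{x}-iz[\sigma_{3},M]-QM$ and $NM:=M_{t}-4iz^{3}[\sigma_{3},M]-(4z^{2}Q-2iz(Q_{x}-Q^{2})\sigma_{3}+2Q^{3}-Q_{xx})M$ with the Lax coefficients written down in advance (see (\ref{114})), observes by direct computation that $(LM)_{+}=(LM)_{-}V_{x,t}$ and $(NM)_{+}=(NM)_{-}V_{x,t}$, substitutes the expansion (\ref{112}) to conclude $LM,\,NM=\mathcal{O}(z^{-1})$, and then invokes the unique solvability of Problem \ref{problem2} (resting on Lemma \ref{lemma3.4} and Proposition \ref{prop5.1}) to force $LM=NM=0$. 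You instead conjugate the phase away, form the logarithmic derivatives $(\partial\widetilde{M})\,\widetilde{M}^{-1}$, and apply Liouville's theorem. What your route costs: you must prove $\det M\equiv 1$ and invertibility (you do), justify the analytic continuation of $\Lambda_{x},\Lambda_{t}$ across $\mathbb{R}$ by a Morera-type argument, and you still need smooth $(x,t)$-dependence of $M$ and a valid expansion to order $z^{-3}$ --- the same unproved conveniences the paper relies on when it differentiates $M$ and writes (\ref{112}). What your route buys: the polynomial form of the Lax matrices is \emph{derived} rather than posited, so the theorem's specific coefficients emerge from the computation instead of being checked against an ansatz. A point in your favour: the paper's claim that ``comparing coefficients'' yields $NM=\mathcal{O}(z^{-1})$ silently uses the relations among $M_{1},M_{2},M_{3}$ and $Q,Q_{x},Q_{xx}$ that follow from the already-established spatial equation --- exactly the recursion you identify explicitly as the main algebraic work --- so your write-up exposes a step the paper compresses; neither text carries that algebra out in full, and to essentially the same degree.
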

\begin{proof}
Define
\begin{equation}
\begin{aligned}
&LM=M_{x}(x,t;z)-iz[\sigma_{3},M]-QM,\\
&NM=M_{t}(x,t;z)-4iz^{3}[\sigma_{3},M]-(4z^{2}Q-2iz(Q_{x}-Q^{2})\sigma_{3}+2Q^3-Q_{xx} )M.
\end{aligned}
\label{114}\end{equation}
Where $M(x,t;z)$ satisfies Problem\ref{problem2} and $u(x,t)=2iz\lim_{z\rightarrow \infty}M_{12}(x,t).$

Then, by direct calculation we know $LM$ and $NM$ satisfy
$$
(LM)_{+}=(LM)_{-}V_{x,t},\ (NM)_{+}=(NM)_{-}V_{x,t}.
$$
Using definition of Problem\ref{problem2} we can rewrite $M(x,t; z)$ in the following form
\begin{equation}
M(x,t; z)=I+\frac{M_1}{z}+\frac{M_2}{z^{2}}+\cdot\cdot\cdot+\frac{M_n}{z^n}+\cdot\cdot\cdot
\label{112}\end{equation}
Substitute (\ref{112}) into (\ref{114}) and compare the  coefficients of  $z$ on both side of equation and noticing (\ref{113}). We can get
$$
LM \sim \mathcal{O}(z^{-1}),\ NM \sim \mathcal{O}(z^{-1}).
$$
In other words, if $M(x,t;z)$ satisfies Problem\ref{problem2} and $u(x,t)=2iz\lim_{z\rightarrow \infty}M_{12}(x,t)$ then $LM$ and $NM$ satisfy problem\ref{problem2} and $LM \rightarrow 0,\ NM \rightarrow 0$ as $z\rightarrow 0$. Noticing Lemma\ref{lemma3.4} and proposition\ref{prop5.1} we know Problem\ref{problem2} has unique solution which yields $LM=NM=0$. Then we get (\ref{107}).
\end{proof}

So under the time evolution of the scattering data $r(x,t;z)$ in (\ref{eeew}), the function reconstructed from the Problem \ref{ope} through the reconstruction formula (\ref{4.4}) under time-dependent scattering data $r(t;z)$ is still a solution of the Mkdv equation (\ref{1}).

\subsection{The proof of main results }
In this section, we will prove the existence of the local  and  global solutions to the Cauchy problem .
The scheme behind the proof  can be described as below.
\begin{lemma} \label{leea}
   Let   the initial data  $u_0(x) \in H^{3}(\mathbb{R}) \cap H^{1,1}(\mathbb{R}) $ ,
then there exists a unique local solution  to the Cauchy problem (\ref{mkdv1})-(\ref{mkdv2}).
\begin{align}
u \in C([0,T],   H^{3}(\mathbb{R}) \cap H^{1,1}(\mathbb{R})).\nonumber
\end{align}
Furthermore, the map
\begin{equation}
  H^{3}(\mathbb{R}) \cap H^{1,1}(\mathbb{R}) \ni u_{0}  \mapsto u  \in C ([0, T],\in   H^{3}(\mathbb{R}) \cap H^{1,1}(\mathbb{R}))\nonumber
\end{equation}
is Lipschitz continuous.
\end{lemma}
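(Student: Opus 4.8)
The plan is to realize the solution map as a composition of three maps that have already been shown to be Lipschitz continuous in Sections \ref{sec2}--\ref{sec5}, and then to promote the resulting pointwise-in-$t$ regularity to continuity in time. For initial data $u_0\in H^3(\mathbb{R})\cap H^{1,1}(\mathbb{R})$ obeying the hypothesis of Theorem \ref{th1}, I would run the chain
\[
u_0 \longmapsto r_{1,2}(0;\cdot)\longmapsto r_{1,2}(t;\cdot)\longmapsto u(\cdot,t),
\]
in which the first arrow is the direct scattering transform of Section \ref{sec2} (Lipschitz from $H^3\cap H^{1,1}$ into $H^{1,1}\cap L^{2,3}$), the second is the explicit time evolution $r_1(t;z)=r_1(0;z)e^{8iz^3t}$, $r_2(t;z)=r_2(0;z)e^{-8iz^3t}$, and the third is the reconstruction map of Proposition \ref{prop4.1} (Lipschitz from $H^{1,1}\cap L^{2,3}$ back into $H^3\cap H^{1,1}$). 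Fixing $t\in[0,T]$ and composing yields a well-defined $u(\cdot,t)\in H^3\cap H^{1,1}$; that this $u$ genuinely solves \eqref{mkdv1}--\eqref{mkdv2} is the content of the theorem preceding this subsection, where uniqueness of RH Problem \ref{problem2} (Lemma \ref{lemma3.4}, Proposition \ref{prop5.1}) forces $LM=NM=0$ and hence the Lax equations \eqref{107}. Uniqueness of the solution then follows from the bijectivity asserted in Theorem \ref{th1}: $u(\cdot,t)$ is determined by $r_{1,2}(t;\cdot)$, which is determined deterministically by $r_{1,2}(0;\cdot)$, which is determined by $u_0$.

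Before composing I would verify that the intermediate object remains admissible for all $t\in[0,T]$. Since $|e^{\pm 8iz^3t}|=1$ for $z\in\mathbb{R}$, the pointwise bound $|r_{1,2}(t;z)|=|r_{1,2}(0;z)|<1$ is preserved, so the solvability hypotheses of the RH lemmas of Section \ref{sec3} and of Lemma \ref{4.8} continue to hold along the evolution. Proposition \ref{prop5.1} then guarantees $r_{1,2}(t;\cdot)\in H^{1,1}\cap L^{2,3}$ with a norm bound finite for every finite $T$: the $L^{2,3}$ norm is conserved, while the $H^{1,1}$ norm grows at most linearly in $T$ (the term $24T\|r_{1,2}(0;\cdot)\|_{L^{2,3}}$). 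Feeding these bounds through Proposition \ref{prop4.1} produces the a priori estimate $\sup_{t\in[0,T]}\|u(\cdot,t)\|_{H^3\cap H^{1,1}}<\infty$.

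The main technical step is upgrading this to $u\in C([0,T],H^3\cap H^{1,1})$, and I expect it to be the crux of the argument. By the Lipschitz property of the reconstruction map it suffices to show $t\mapsto r_{1,2}(t;\cdot)$ is continuous into $H^{1,1}\cap L^{2,3}$. For the $L^{2,3}$ and the $L^2$ parts this is immediate by dominated convergence, since $r_{1,2}(t';\cdot)-r_{1,2}(t;\cdot)=r_{1,2}(0;\cdot)\,e^{\mp 8iz^3t}\big(e^{\mp 8iz^3(t'-t)}-1\big)$ and the bracket is bounded by $2$ and tends to $0$ pointwise. The delicate part is the $H^{1,1}$ seminorm: differentiating in $z$ yields
\[
z\partial_z r_1(t;z)=z\partial_z r_1(0;z)\,e^{8iz^3t}+24it\,z^3 r_1(0;z)\,e^{8iz^3t},
\]
so controlling the increment in $t$ requires $z^3 r_{1,2}(0;\cdot)\in L^2$, i.e.\ precisely the $L^{2,3}$ regularity coming from $u_0\in H^3$. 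This is exactly why $H^{1,1}\cap L^{2,3}$ is the correct target of the direct transform; the continuity of this $z^3$-weighted term in $t$, without loss of regularity, is the point that must be handled carefully, and it again follows from dominated convergence once the $L^{2,3}$ bound of Proposition \ref{prop5.1} is in hand.

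Finally, the Lipschitz continuity of $u_0\mapsto u$ in $C([0,T],H^3\cap H^{1,1})$ follows by composing the three Lipschitz maps: the direct transform is Lipschitz on bounded sets of $H^3\cap H^{1,1}$, the time-evolution map is Lipschitz on $H^{1,1}\cap L^{2,3}$ with constant depending only on $T$ (by the linear-in-$T$ growth in Proposition \ref{prop5.1}), and the reconstruction map is Lipschitz by Proposition \ref{prop4.1}. Taking the supremum over $t\in[0,T]$ of the resulting composite bound gives the stated estimate. Since the bound in Proposition \ref{prop5.1} is finite for arbitrary $T$, the same argument extends the solution to every interval $[0,T]$, which is precisely what feeds into the global statement of Theorem \ref{th1}.
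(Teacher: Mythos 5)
Your proposal is correct and follows essentially the same route as the paper: compose the direct scattering map, the explicit time evolution of the reflection coefficients (Proposition \ref{prop5.1}), and the Lipschitz reconstruction map (Proposition \ref{prop4.1}), then upgrade the pointwise-in-$t$ bounds to continuity in time and obtain the Lipschitz dependence on $u_0$ by chaining the three Lipschitz maps. The only divergence is in the time-continuity step, where the paper asserts a Lipschitz-in-time rate $\|u(t+\Delta t,\cdot)-u(t,\cdot)\|_{H^{3}\cap H^{1,1}}\leq c|\Delta t|$ while you argue by dominated convergence; your version is, if anything, the more defensible treatment of the $z^{3}$-weighted terms, since a genuine $O(|\Delta t|)$ rate for the $L^{2,3}$ part of $r_{1,2}(t+\Delta t;\cdot)-r_{1,2}(t;\cdot)$ would require $z^{6}r_{1,2}(0;\cdot)\in L^{2}(\mathbb{R})$, a decay the paper never establishes.
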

\begin{proof}
Performing a similar analysis as Lemma \ref{lemma15} and lemma \ref{lemma16},  we can establish a RH problem for $r(t;z)$ for every $t\in[0,T]$ and address the existence and uniqueness of a solution to the RH problem. Further, the potential $u(t,x)$ can be recovered from the reflection coefficients $r(t;z)$. Moreover, the potential $u(t, \cdot)    \in H^{1,1}\cap H^{3} $ for every $t\in[0,T]$ and is Lipschitz continuous of $r(t;z)$. Thus we have
\begin{equation}
\begin{split}
    \| u(t;\cdot) \|_{H^{1,1}\cap H^{3} } &\leq c_1\|r(t;\cdot)\|_{\in  H^{1,1} \cap L^{2,3}}\\
    &\leq c_2r(0;\cdot)\|_{H^{1,1} \cap L^{2,3}}   \leq c_3 \|u_0 \|_{H^{1,1}\cap H^{3}}
\end{split}\label{158}
\end{equation}
where the positive constants $c_1$,$c_2$ and $c_3$ depends on $\|r\|_{L^{\infty}},\ \|zr\|_{L^{\infty}}$ and $(T,\ \|u_0 \|_{H^{1,1}(\mathbb{R})\cap H^{3}(\mathbb{R})})$, respectively.

Next we show $u(x ,t)$ is continuous  with respect to  $t\in[0,T]$ under the $H^{1,1}(\mathbb{R})\cap H^{3}(\mathbb{R})$ norm.
Let $t \in[0,T]$ and $|\Delta t|<1$ such that $t+\Delta t \in [0,T]$,  then with the Lipschitz continuity from $u(t,x)$ to $r(t;z)$ in Proposition \ref{prop4.1}, we
have
\begin{equation}
\begin{aligned}
&\|u(t+\Delta t, x  )-u( t, x  )\|_{H^{1,1}(\mathbb{R})\cap H^{3}(\mathbb{R})} \\
&\leq    c  (\| r_{1}(t+\Delta t; z )-r_{1}( t;z  ) \|_{H^{1,1}(\mathbb{R})\cap L^{2,3}(\mathbb{R})}+\| r_{2}(t+\Delta t; z )-r_{2}( t;z  ) \|_{H^{1,1}(\mathbb{R})\cap L^{2,3}(\mathbb{R})})\\
&\leq c|\Delta t|  (\|r_{1}(0;z)\|_{H^{1,1}(\mathbb{R})\cap L^{2,3}(\mathbb{R})}+\|r_{2}(0;z)\|_{H^{1,1}(\mathbb{R})\cap L^{2,3}(\mathbb{R})}) \leq c |\Delta t| \rightarrow 0, \ \Delta t \to 0,
\end{aligned}\nonumber
\end{equation}
which together with the estimate (\ref{158}) implies that there exists a unique local solution $u(x,t)\in C([0,T],H^{3}(\mathbb{R})\cap H^{1,1}(\mathbb{R}))$ to the Cauchy problem (\ref{mkdv1})-(\ref{mkdv2}) and the map
$$
H^{1,1}(\mathbb{R})\cap H^{3}(\mathbb{R}) \ni u_{0}(x) \mapsto u(t,x) \in C ([0, T], H^{1,1}(\mathbb{R})\cap H^{3}(\mathbb{R}))
$$
 is Lipschitz continuous.
 \end{proof}

Finally we  give the proof of Theorem \ref{th1}.

\begin{proof} [Proof of  Theorem \ref{th1}]

Suppose the maximal time in which the local solution in Lemma \ref{leea}  exists is $T_{max}$.

If $T_{max}=\infty$, then the local solution is global.

If the local solution exists in the closed interval $[0,T_{max}]$, we can use $u(T_{max},\cdot)\in H^{3}(\mathbb{R})\cap H^{1,1}(\mathbb{R})$ as a new initial data. By a similar analysis as the previous sections, there exists a positive constant $T_1$ such that the solution $u\in C([T_{max},T_{max}+T_1], H^{1,1}(\mathbb{R})\cap H^{3}(\mathbb{R}))$ exists. This contradicts with the maximal time assumption.

If the local solution exists in the open interval $[0,T_{max})$. According to (\ref{158}), we have
\begin{equation}
\|u(t,x) \|_{H^{1,1}(\mathbb{R})\cap H^{3}(\mathbb{R})}\leq c_3(T_{max})\|u_0 \|_{H^{1,1}(\mathbb{R})\cap H^{3}(\mathbb{R})},\quad t\in[0,T_{max}).\nonumber
\end{equation}
 Due to the continuity of $u(t,x)$ to the time $t$, the limit of $u(t,x)$ as $t$ approaches to $T_{max} $ exists. Let $ u_{max}(x):=\lim_{t\rightarrow T_{max}} u(t,x)$. Taking the limit by $t\rightarrow T_{max}$ in  (\ref{158}), we have
 \begin{equation}
 \|u_{max} \|_{H^{1,1}(\mathbb{R})\cap H^{3}(\mathbb{R})}\leq  c_3(T_{max})\|u_0 \|_{H^{1,1}(\mathbb{R})\cap H^{3}(\mathbb{R})},\nonumber
 \end{equation}
 which implies that we can extend the local solution $u\in C([0,T_{max}),H^{1,1}(\mathbb{R})\cap H^{3}(\mathbb{R}))$ to $u\in C([0,T_{max}],H^{1,1}(\mathbb{R})\cap H^{3}(\mathbb{R}))$, this contradicts with the premise that $[0,T_{max})$ is the maximal open interval.
\end{proof}

\noindent\textbf{Acknowledgements}

This work is supported by  the National Natural Science
Foundation of China (Grant No. 12271104,  51879045).\vspace{2mm}

\noindent\textbf{Data Availability Statements}

The data that supports the findings of this study are available within the article.\vspace{2mm}

\noindent{\bf Conflict of Interest}

The authors have no conflicts to disclose.

\end{document}